\numberwithin{equation}{section}
\newtheorem{remark}{Remark}[section]
\newtheorem{algorithm}{Algorithm}[section]
\def\RR{\mathbb R}
\def\EE{\mathbb E}
\def\llambda{\Lambda}
\def\DH{{\mathcal{D}}}
\def\LL{L^1_2(\DH\times\RR^{d_v})}
\def\LH{L^1_2(\RR^{d_v})}
\def\LLBi{L^1_2(\DH\times\RR^{d_v};L^2(\Omega))}
\def\LHBi{L^1_2(\RR^{d_v};L^2(\Omega))}
\def\w{v}
\def\var{{\rm Var}}
\def\cov{{\rm Cov}}
\def\C{{\rm C}}
\def\be{\begin{equation}}
\def\ee{\end{equation}}
\def\bea{\begin{eqnarray}}
\def\eea{\end{eqnarray}}
\title{Multi-scale variance reduction methods based on multiple control variates for kinetic equations with uncertainties\footnote{\tiny
The research that led to the present article was partially supported by the INdAM-GNCS 2018 project \emph{Numerical methods for multi-scale control problems and applications}.}}
\author{Giacomo Dimarco\footnote{\tiny Department of Mathematics \& Computer Science, University of Ferrara, Via Machiavelli 30, Ferrara, 44121, Italy (giacomo.dimarco{@}unife.it).} and Lorenzo Pareschi\footnote{\tiny Department of Mathematics \& Computer Science, University of Ferrara, Via Machiavelli 30, Ferrara, 44121, Italy (lorenzo.pareschi{@}unife.it).}} 
\begin{document}
\maketitle
\begin{abstract} The development of efficient numerical methods for kinetic equations with stochastic parameters is a challenge due to the high dimensionality of the problem. Recently we introduced a multiscale control variate strategy which is capable to accelerate considerably the slow convergence of standard Monte Carlo methods for uncertainty quantification. Here we generalize this class of methods to the case of multiple control variates. We show that the additional degrees of freedom can be used to improve further the variance reduction properties of multiscale control variate methods.\end{abstract}
\begin{keywords}
Uncertainty quantification, kinetic equations, Monte Carlo methods, multiple control variates, multi-scale methods, multi-fidelity methods
\end{keywords}
 
\begin{AMS}
	76P05, 65M75, 65C05
\end{AMS}

\section{Introduction}
Hyperbolic and kinetic equations with random inputs have attracted a lot of attention in the recent years \cite{DPL, JinPareschi, LeMK, NTW, PDL, PIN_book, ZLX}. Most of the literature on kinetic equations is based on the use of Stochastic-Galerkin methods based on on generalized Polynomial Chaos \cite{albi2015MPE, HJ,ZJ, LJ, JinPareschi,JPH,RHS} and only recently these problems have been analyzed in the framework of statistical sampling methods based on Monte Carlo (MC) techniques \cite{Caflisch, Giles, PT2, DPms, JinPareschi, DPZ, HH}.

When dealing with kinetic equations, MC sampling methods present several advantages since they afford simple integration of existing fast deterministic numerical solvers and parallelization techniques which are essential to reduce the computational complexity of many kinetic equations \cite{DP15, DLNR, DPR}. In addition MC methods are effective when the probability distribution of the random inputs is not known analytically or lacks of regularity when approaches based on stochastic orthogonal polynomials may be impossible to use or may produce poor results \cite{MSS, Xu}.

Recently in \cite{DPms} we introduced a control variate technique which takes advantage of the multi-scale nature of the kinetic equation which is capable to strongly accelerate the slow convergence of MC methods. In this manuscript we generalize this class of multi-scale control variate methods (MSCV) to the case of multiple-control variates. In particular we will show how the use of additional control variates functions permit to further reduce the statistical variance of the methods. We refer to \cite{PGW, PWG} for related approaches in the general framework of multifidelity models.  

From a mathematical viewpoint, we consider kinetic equations of the general form \cite{Cer, DPms}
\be
\partial_t f+ {\w}\cdot \nabla_x f = \frac1{\varepsilon}Q(f,f),
\label{eq:FP_general}
\ee
where $f=f({z},x,{\w},t)$, $t\ge 0$, $x\in\DH \subseteq \RR^{d_x}$, ${\w}\in\RR^{d_{\w}}$, $d_x,d_{\w}\ge 1$, and ${z}\in\Omega\subseteq\RR^{d_{z}}$, $d_{z} \geq 1$, is a {random variable}. In \eqref{eq:FP_general} the parameter $\varepsilon > 0$ is the Knudsen number and the particular structure of the interaction term $Q(f,f)$ depends on the kinetic model considered.

The most famous example is represented by the nonlinear Boltzmann equation of rarefied gas dynamics 
\be
Q(f,f)=\int_{S^{d_{\w}-1}\times\RR^{d_{\w}}} B({\w},{\w}_*,\omega,{z}) (f'f'_*-f f_*)\,d{\w}_*\,d\omega
\label{eq:Boltzmann}
\ee
where $d_v \geq 2$ and 
\be
v'=\frac12(v+v_*)+\frac12(|v-v_*|\omega),\quad v_*'=\frac12(v+v_*)-\frac12(|v-v_*|\omega).
\ee

Before going into the details of the presentation we introduce some notations and assumptions that will be used in the sequel.
If $z\in \Omega$ is distributed as $p(z)$ we denote the expected value by
\be
\EE[f](x, {\w}, t) = \int_{\Omega} f(z,x, {\w}, t)p(z)\,dz,
\ee
and we introduce the following $L^p$-norm with polynomial weight \cite{PP, RSS}
\be
\| f(z,\cdot,t)\|^p_{L^p_s(\DH\times\RR^{d_v})} = \int_{\DH\times\RR^{d_v}} |f(z,x,v,t)|^p (1+|v|)^s\,dv\,dx.
\ee
Next, for a random variable $Z$ taking values in ${L^p_s(\DH\times\RR^{d_v})}$, we define 
\be
\|Z\|_{{L^p_s(\DH\times\RR^{d_v};L^2(\Omega))}}=\|\EE[Z^2]^{1/2}\|_{{L^p_s(\DH\times\RR^{d_v})}}.
\label{eq:norm1}
\ee
The above norm, if $p\neq 2$, differs from   
\be
\|Z\|_{L^2(\Omega;{L^p_s(\DH\times\RR^{d_v}))}}=\EE\left[\|Z\|^2_{L^p_s(\DH\times\RR^{d_v})}\right]^{1/2},
\label{eq:norm2}
\ee
used for example in \cite{MSS}. Note that by Jensen inequality \cite{Rudin} we have
\be
\|Z\|_{{L^p_s(\DH\times\RR^{d_v};L^2(\Omega))}} \leq \|Z\|_{L^2(\Omega;{L^p_s(\DH\times\RR^{d_v}))}}.
\ee
To avoid unnecessary difficulties in the sequel we consider norm \eqref{eq:norm1} for $p=1$. The same results hold true for $p=2$ (the two norms coincides) whereas their extension to norm \eqref{eq:norm2} for $p=1$ typically requires $Z$ to be compactly supported. We refer to \cite{MSS, DPms} and \cite[Chapter 7]{JinPareschi} for further details.

Finally, we assume that the deterministic solver for (\ref{eq:FP_general}), if the initial data $f_0$ is sufficiently regular, satisfies the estimate (see \cite{RSS, DP15, Son, DPms, MP})
\be
\|f(\cdot,t^n)-f_{\Delta x,\Delta {\w}}^n\|_{{\LL}} \leq C \left(\Delta x^p+\Delta {\w}^q \right),
\label{eq:det}
\ee
with $C$ a positive constant which depends on time and on the initial data, and $f_{\Delta x,\Delta {\w}}^n$ the computed approximation of the deterministic solution $f(x,v,t)$ at time $t^n$ on the mesh $\Delta x$ and $\Delta v$. Here the positive integers $p$ and $q$ characterize the accuracy of the discretizations in the phase-space and for simplicity, we ignored the errors due to the time discretization and to the truncation of the velocity domain in the deterministic solver \cite{DP15}. 
We emphasize that the estimates here presented are purposely of a general nature to illustrate the characteristics of the method, the application to specific kinetic models can clearly lead to more targeted estimates but it is outside the objectives of the present work.  

The remaining sections can be summarized as follows. In the next Section we survey the MSCV methods introduced in \cite{DPms}. Then, in Section 3 we extend these multi-scale approaches to the case of multiple control variates. First we introduce a standard multiple control variate approach and then consider the construction of recursive multiple control variate methods based on a hierarchical structure. A particular attention is devoted to the case of two control variates. 
 Section 4 is devoted to present several numerical examples confirming the capability of the multiple control variate approach to provide a strong variance reduction with respect to standard Monte Carlo and to further improve the methods based on a single control variate. Some conclusions are reported at the end of the manuscript.  

\section{Multi-scale control variate methods}
\label{sec:MSCV}
In this section we survey the MSCV approach recently introduced in \cite{DPms}. The main idea of the method is to reduce the variance of standard Monte Carlo estimators using as control variate the solution of a simpler kinetic model, which can be evaluated at a fraction of the cost of the full model, with the same fluid limit behavior. We illustrate the method when applied to the solution of a kinetic equation of the type (\ref{eq:FP_general}) with deterministic interaction operator $Q(f,f)$ and random initial data $f({z},x,{\w},0)=f_0({z},x,{\w})$. 

\subsection{Variance reduction strategies}
Let us first consider the space homogeneous problem  
\be
\frac{\partial f}{\partial t} = Q(f,f),
\label{eq:BH}
\ee
where $f=f({z},\w,t)$ and with random initial data $f({z},{\w},0)=f_0({z},{\w})$. In \eqref{eq:BH}, without loss of generality, we have fixed $\varepsilon=1$ since in the space homogeneous case the Knudsen number scales with time. Under suitable assumptions, {one can show that} $f({z},{\w},t)$ exponentially decays to the unique steady state $f^{\infty}({z},{\w})$ such that $Q(f^\infty,f^\infty)=0$ which satisfies
\be
m_{\phi}(f_0)=m_{\phi}(f^\infty),\qquad m_{\phi}(f):=\int_{\RR^{d_{\w}}}\phi({\w})f({z},{\w},t)d{\w},
\label{eq:mom}
\ee
for some moments, for example $\phi({\w})=1,{\w},|{\w}|^2/2$ in the classical settings of conservation of mass, momentum and energy (see \cite{Vill, Trist, ToVil, Tos1999}). We emphasize that, for many space homogeneous kinetic models, the equilibrium state can be computed directly from the initial data thanks to the conservation properties \eqref{eq:mom}.

We recall that, given $M$ independent identically distributed (i.i.d.) samples $f^{k}(v,t)$, $k=1,\ldots,M$, of the solution to \eqref{eq:BH}, the standard Monte Carlo estimator reads
\be
E_M[f] = \frac1{M} \sum_{k=1}^M f^{k}(v,t).
\label{eq:MC}
\ee
The above estimator satisfies \cite{MSS,DPms}
\be
\|\EE[f] - E_M[f]\|_{{\LLBi}} \leq C {\sigma}_f M^{-1/2} ,
\ee
where ${\sigma}_f=\|\var(f)^{1/2}\|_{\LL}$ and $\var(f)=\EE[(\EE[f]-f)^2]$ is the variance.

A simple variance reduction strategy is obtained by splitting the expected value of the solution as
\be
\begin{split}
{\mathbb E}[f]({\w},t)&=\int_{\Omega} f({z},{\w},t)p({z})d{z}\\
&=\int_{\Omega} f^\infty({z},{\w})p({z})d{z}+
\int_{\Omega} (f({z},{\w},t)-f^\infty({z},{\w}))p({z})d{z}\\
&=\EE[f^\infty](v)+\EE[f-f^\infty](v,t),
\end{split}
\label{eq:mme}
\ee
and exploiting the fact that $\EE[f^\infty]$ can be evaluated with arbitrary accuracy at a negligible cost (for example using a very fine grid of samples) since it does not depend on the solution computed at each time step.

Now, if we use \eqref{eq:mme} and estimate 
\be
\EE[f]\approx \EE[f^\infty]+E_M[f-f^\infty]
\label{eq:28}
\ee 
we obtain an error of the type
\[ 
\| {\EE}[f](\cdot,t)-\EE[f^\infty](\cdot)-E_M[f-f^\infty](\cdot,t)\|_{{\LHBi}} \simeq \sigma_{f-f^\infty} M^{-1/2}.
\] 
Since the non equilibrium part $f-f^\infty$ goes to zero in time exponentially fast, then also its variance goes to zero, which means that for long times estimate \eqref{eq:28} becomes exact and depends only on the accuracy of the evaluation of $\EE[f^\infty]$.

The above argument can be generalized by considering a time dependent approximation of the solution $\tilde{f}({z},{\w},t)$, whose evaluation is significantly cheaper than computing $f({z},{\w},t)$, such that 
$m_\phi(\tilde{f})=m_\phi(f)$ for some moments and that
$\tilde{f}({z},{\w},t)\to f^{\infty}({z},{\w})$ as $t\to\infty$.  

For example, one can consider the space homogeneous Boltzmann equation \eqref{eq:BH} where $Q(f,f)$ is given by  \eqref{eq:Boltzmann} and assume 
 $\tilde{f}({z},{\w},t)$ to be the exact solution of the space homogeneous BGK approximation 
\be
\frac{\partial \tilde{f}}{\partial t} = \nu (\tilde{f}^\infty - \tilde{f}) 
\label{eq:BGKh}
\ee 
with $\nu>0$ independent from $z$ and for the same initial data $f_0({z},{\w})$. Thanks to the time invariance of the equilibrium state we have $\tilde{f}^\infty=f^\infty$ and we can write the exact solution to \eqref{eq:BGKh} as
\be
\tilde{f}({z},{\w},t) = e^{-\nu t} f_0({z},{\w})+(1-e^{-\nu t}) f^{\infty}({z},{\w}).
\label{eq:BGKexa}
\ee
We can assume that the expected value of the control variate $\EE[\tilde{f}]({\w},t)$ is computed with arbitrary accuracy at a negligible cost since it is a convex combination of the initial data and the equilibrium part. We denote this value by 
\be
\tilde{\bf f}(v,t)=e^{-\nu t} {\bf f}_0({\w})+(1-e^{-\nu t}) {\bf f}^{\infty}({\w}),
\label{eq:BGKexas}
\ee
where ${\bf f}_0={\mathbb E}[f_0(\cdot,{\w})]$ and ${\bf f}^{\infty}={\mathbb E}[f^{\infty}](\w)$ or accurate approximations of the same quantities.

If we now use the estimator 
\be
\EE[f]\approx \EE[\tilde f]+E_M[f-\tilde f]
\label{eq:29}
\ee 
we have an error like
\[ 
\| {\EE}[f](\cdot,t)-\EE[\tilde f](\cdot)-E_M[f-\tilde f](\cdot,t)\|_{{\LHBi}} \simeq \sigma_{f-\tilde f} M^{-1/2},
\] 
where even in this case, $\sigma_{f-\tilde f} \to 0$ as $t\to \infty$.

\subsubsection{Control variate estimators}\label{sec:single}

The simple variance reduction argument presented in the previous section can be used to formalize the following control variate estimator \cite{DPms}
\be
\tilde{E}^{\lambda}_M[f](v,t)=\frac1{M} \sum_{k=1}^M f^{k}(v,t) - \lambda\left(\frac1{M} \sum_{k=1}^M \tilde{f}^{k}(v,t)-\tilde{\bf f}({\w},t)\right).
\label{eq:nest2}
\ee
The control variate estimator \eqref{eq:nest2} is unbiased and consistent for any choice of $\lambda\in\RR$.
In particular, for $\lambda=0$ we recover the standard MC estimator $\tilde{E}^{0}_M[f]=E_M[f]$, whereas for $\lambda=1$ we have the  estimator $\tilde{E}^{1}_M[f]=\tilde{\bf f}+E_M[f-\tilde f]$ corresponding to \eqref{eq:29}.

If we now consider the random variable
\[
f^{\lambda}(z,v,t)=f(v,z,t)-\lambda(\tilde f(z,v,t)-\tilde{\bf f}(v,t)),
\]
we have ${\mathbb E}[f^\lambda]={\mathbb E}[f]$, $E_M[f^\lambda]=\tilde E_M^{\lambda}[f]$ and we can quantify its variance as 
\be
\var(f^\lambda)=\var(f)+\lambda^2 \var(\tilde f)-2\lambda\cov(f,\tilde f).
\label{eq:var1}
\ee
We have the following \cite{DPms}
\begin{theorem}
The quantity   
\be
\lambda^* = \frac{\cov(f,\tilde f)}{\var(\tilde f)}
\label{eq:lambdas}
\ee
minimizes the variance of $f^\lambda$ at the point $(v,t)$ and gives
\be
\var(f^{\lambda^*}) = (1-\rho_{f,\tilde f}^2)\var(f), 
\label{eq:var2}
\ee
where $\rho_{f,\tilde f} \in [-1,1]$ is the correlation coefficient between $f$ and $\tilde f$. In addition, we have  
\be
\lim_{t\to\infty} \lambda^*(v,t) =1,\qquad \lim_{t\to\infty} \var(f^{\lambda^*})(v,t)=0\qquad \forall\, v \in \RR^{d_v}.
\label{eq:lambdasa}
\ee
\label{th:1}
\end{theorem}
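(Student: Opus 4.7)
The plan is to treat the three assertions in sequence, each by elementary probabilistic arguments combined with the large-time relaxation behavior of the underlying kinetic model.

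First, for the optimality of $\lambda^*$, I would simply view equation \eqref{eq:var1} as a quadratic polynomial in $\lambda$ with positive leading coefficient $\var(\tilde f)$. Differentiating with respect to $\lambda$ and setting the derivative to zero yields $2\lambda\,\var(\tilde f)-2\cov(f,\tilde f)=0$, so $\lambda^*=\cov(f,\tilde f)/\var(\tilde f)$, which is a strict minimum since the second derivative $2\var(\tilde f)$ is positive (assuming $\tilde f$ is not deterministic, otherwise the statement is trivial).

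Second, to obtain the identity \eqref{eq:var2}, I would substitute $\lambda^*$ back into \eqref{eq:var1}. The two $\lambda$-dependent terms combine to $-\cov(f,\tilde f)^2/\var(\tilde f)$, so
\begin{equation*}
\var(f^{\lambda^*})=\var(f)-\frac{\cov(f,\tilde f)^2}{\var(\tilde f)}=\var(f)\!\left(1-\frac{\cov(f,\tilde f)^2}{\var(f)\var(\tilde f)}\right)=(1-\rho_{f,\tilde f}^2)\var(f).
\end{equation*}
Cauchy--Schwarz ensures $\rho_{f,\tilde f}\in[-1,1]$, so $\var(f^{\lambda^*})\geq 0$ as required.

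The main obstacle is the third part, since it relies on the asymptotic behavior of the kinetic model rather than on algebra. The key observation is that, under the assumptions of the preceding section, both the true solution $f(z,v,t)$ and the control variate $\tilde f(z,v,t)$ converge (exponentially, e.g.\ as in \eqref{eq:BGKexa}) to the common equilibrium $f^\infty(z,v)$, pointwise in $v$ and almost surely in $z$. Writing $f=f^\infty+\delta f$ and $\tilde f=f^\infty+\delta \tilde f$ with $\delta f,\delta\tilde f\to 0$ as $t\to\infty$, and assuming the dominated-convergence hypotheses needed to pass the limit under $\EE$, one obtains $\cov(f,\tilde f)\to\var(f^\infty)$ and $\var(\tilde f)\to\var(f^\infty)$, hence $\lambda^*(v,t)\to 1$. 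For the variance, the cleanest route is to use the already-established identity $\var(f^{\lambda^*})=\var(f)-\cov(f,\tilde f)^2/\var(\tilde f)$: the right-hand side tends to $\var(f^\infty)-\var(f^\infty)^2/\var(f^\infty)=0$ when $\var(f^\infty)>0$, and is trivially zero otherwise. Equivalently, $\rho_{f,\tilde f}\to 1$ because $f$ and $\tilde f$ become perfectly correlated in the limit, so $(1-\rho_{f,\tilde f}^2)\var(f)\to 0$. The delicate point to address, and the one that I would highlight as the main technical step, is the justification of exchanging limits with expectations, which requires the exponential-decay estimates on $f-f^\infty$ and $\tilde f-f^\infty$ that were invoked in Section~\ref{sec:MSCV}.
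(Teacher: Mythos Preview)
Your argument is correct and standard. Note that the paper does not actually prove Theorem~\ref{th:1}: it is stated with a citation to \cite{DPms} as part of the survey in Section~\ref{sec:MSCV}. That said, your approach for the first two parts (differentiate the quadratic \eqref{eq:var1} in $\lambda$, then substitute back and recognize the correlation coefficient) is exactly the method the paper employs for the multiple-control-variate analogue in Theorem~\ref{th:2}, and your asymptotic reasoning via $f,\tilde f\to f^\infty$ matches the informal justification the paper gives for the $\varepsilon\to 0$ limits around \eqref{eq:alambda}.
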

Using such an approach, in combination with a deterministic solver satisfying \eqref{eq:det}, one obtains the following error estimate \cite{DPms,MSS}
\bea
\|\EE[f](\cdot,t)-\tilde{E}^{\lambda^*}_M[f]\|_{{\LHBi}} \leq  C\left\{
\sigma_{f^{\lambda^*}}
M^{-1/2}+\Delta v^q\right\} 
\label{eq:errHMMC}
\eea
where $\sigma_{f^{\lambda^*}}=\|(1-\rho_{f,\tilde f}^2)^{1/2}\var(f)^{1/2}\|_{\LH}$, and $C>0$ depends on the final time and on the initial data. Here we ignored the statistical errors due to the approximation of the control variate expectation and to the estimate of $\lambda^*$. 
Note that, since $\rho^2_{f,\tilde{f}}\to 1$ as $t\to\infty$ the statistical error will  vanish for large times. 

\begin{remark}
In practice, $\cov(f,\tilde f)$ appearing in $\lambda^*$ is not known and has to be estimated. Starting from the $M$ samples we have the following unbiased estimators
\bea
\label{eq:varm}
{\var}_M(\tilde f) &=& \frac1{M-1}\sum_{k=1}^M (\tilde f^{k}-E_M[\tilde f])^2,\\
\label{eq:covm}
{\cov}_M(f,\tilde f) &=& \frac1{M-1}\sum_{k=1}^M (f^{k}-E_M[f])(\tilde f^{k}-E_M[\tilde f]),
\eea
which allow to estimate
\be
{\lambda}_M^*= \frac{{\cov}_M(f,\tilde f)}{{\var}_M(\tilde f)}.
\ee 
It can be verified easily that ${\lambda}_M^*\to 1$ as $f\to f^\infty$.  
\end{remark}

\subsection{Multiscale control variate estimators}
\label{sec:MSCVnh}
Let us now consider the full space non homogeneous problem \eqref{eq:FP_general}.
Again the idea is to compute the control variate function with a simplified model which can be evaluated at a fraction of the computational cost of the full model. However, in the space non homogeneous case the expectation of the control variate typically depends on time and must be estimated along the simulation.

Assuming the classical setting of the Boltzmann equation \eqref{eq:FP_general}-\eqref{eq:Boltzmann}, where the collision term characterizes conservation of mass, momentum and energy, integration of \eqref{eq:FP_general} with respect to the collision invariants $\phi({\w})=1,{\w},|{\w}|^2/2$ gives the moments equations
\begin{eqnarray}
\label{eq:smallscale}
\partial_t U +\partial_x
{\mathcal F}(U)+\nabla_x \int_{\RR^{d_{\w}}} {\w}\, \phi (f-f^\infty)\,d{\w}&=&0,
\end{eqnarray} 
where $U=(\rho,u,T)$, with density, mean velocity and temperature defined as 
\be
 \rho=\int_{\RR^{d_{\w}}} f d{\w}, \ u=\frac{1}{\rho}\int_{\RR^{d_{\w}}} {\w}\, f d{\w}, \ T=\frac{1}{d \rho}
 \int_{\RR^{d_{\w}}}|{\w}-u|^{2} f d{\w},
\label{eq:Mo} \ee
and
\[
{\mathcal F}(U)=\int_{\RR^{d_{\w}}} {\w}\, \phi f^\infty\,d{\w},\quad \phi({\w})=1,{\w},|{\w}|^2/2.
\]
Now, generalizing the space homogeneous method based on the local equilibrium $f_\infty$ as control variate we can consider the Euler closure as control variate, namely to assume $f=f^\infty$ in \eqref{eq:smallscale}. 
If we denote by $U_F=(\rho_F,u_F,T_F)^T$ the solution of the fluid model  
\be
\partial_t U_F +\partial_x {\mathcal F}(U_F) = 0,
\label{eq:Euler}
\ee
for the same initial data, the corresponding equilibrium state $f_F^\infty$ can be used as control variate.

Similarly to the homogeneous case, the generalization to an improved control variate based on a suitable approximation of the kinetic solution can be done with the aid of a more accurate fluid approximation, like the compressible Navier-Stokes system, or a simplified kinetic model. In the latter case, we can solve a BGK model
\be
\frac{\partial}{\partial t} \tilde{f} + {\w} \cdot \nabla_x \tilde{f} = \frac{\nu}{\varepsilon} (\tilde{f}^\infty-\tilde{f}),
\label{eq:BGK}
\ee 
for the same initial data and use its solution as control variate.

More precisely, given $M$ i.i.d. samples of the solution $f^k(x,v,t)$ and of the control variate $\tilde f^k(x,v,t)$ we define the estimator
\be
\tilde{E}^{\lambda}_M[f](x,v,t)=\frac1{M} \sum_{k=1}^M f^{k}(x,v,t) - \lambda\left(\frac1{M} \sum_{k=1}^M \tilde{f}^{k}(x,v,t)-\tilde{\bf f}(x,{\w},t)\right),
\label{eq:nesth2}
\ee 
where $\tilde{\bf f}(x,{\w},t)$ is an accurate approximation of ${\mathbb E}[\tilde{f}(\cdot,x,{\w},t)]$. 

The fundamental difference is that now the variance of
\[
f^{\lambda}(z,x,v,t)=f(z,x,v,t)-\lambda(\tilde{f}(z,x,{\w},t)-\tilde {\bf f}(z,x,{\w},t))
\]
will not vanish asymptotically in time since $f^\infty\neq \tilde{f}$, unless the kinetic equation is close to the fluid regime, namely for small values of the Knudsen number. Thus, the first part of Theorem \ref{th:1} is still valid, however the optimal value  
\be
\lambda^* = \frac{\cov(f,\tilde{f})}{\var(\tilde{f})}
\label{eq:lambdash2}
\ee
and the variance
\be
\var(f^{\lambda*})=(1-\rho^2_{f,\tilde f})\var(f)
\label{eq:varl2}
\ee
now satisfy   
\be
\lim_{\varepsilon\to 0} \lambda^*(x,v,t) =1,\qquad \lim_{\varepsilon\to 0} \var(f^{\lambda^*})(x,v,t)=0\qquad \forall\, (x,v) \in \RR^{d_x}\times\RR^{d_v}.
\label{eq:alambda}
\ee
In fact, since as $\varepsilon\to 0$ from \eqref{eq:FP_general}
 we formally have $Q(f,f)=0$ which implies $f=f^\infty$ and $\tilde{f} = f^\infty$, from \eqref{eq:lambdash2} and \eqref{eq:varl2} we obtain \eqref{eq:alambda}.

Even if simulating the control variate system is much cheaper than the full model, since its computational cost is no more negligible we cannot ignore it. Thus, we assume that the control variate model is computed over a fine grid of $M_E \gg M$ samples and use the approximation
\[
\tilde{\bf f}(x,{\w},t)=E_{M_E}[\tilde f](x,v,t),
\]  
in the estimator \eqref{eq:nesth2} which we will denote by $\tilde{E}^{\lambda}_{M,M_E}[f]$. 

This, however, has an impact on the optimal value of $\lambda$ in estimator \eqref{eq:nesth2}. In fact, 
using the independence of $E_M[\cdot]$ and $E_{M_E}[\cdot]$ and the fact that by the central limit theorem \cite{Lo77,HH} we have $\var(E_M[f])=M^{-1}\var(f)$, $\var(E_{M_E}[\tilde f])=M_E^{-1}\var(\tilde f)$, minimizing the variance now leads to the optimal value
\be
\tilde\lambda^* = \frac{M_E}{M+M_E}\lambda^*,
\ee
with $\lambda^*$ given by \eqref{eq:lambdash2}. This correction may be relevant in the cases when $M$ and $M_E$ do not differ too much.  
In our setting, however, $M_E \gg M$ so that $\frac{M_E}{M+M_E}\approx 1$ and we can assume $\tilde \lambda^* \approx \lambda^*$.

Using the optimal value \eqref{eq:lambdash2} and an underlaying deterministic solver which satisfies \eqref{eq:det}, we obtain the error estimate \cite{DPms, MSS}
\bea
\nonumber
&& \|E[f](\cdot,t)-\tilde{E}^{\lambda^*}_{M,M_E}[f]\|_{\LLBi}\\[-.2cm]
\label{eq:errHMMC2}
\\
\nonumber
&& \hskip 4cm \leq {C}\left\{\sigma_{f^{\lambda^*}} M^{-1/2}+\tau_{f^{\lambda^*}} M_E^{-1/2}+\Delta x^p+\Delta v^q\right\} 
\eea
where $\sigma_{f^{\lambda^*}}=\|(1-\rho_{f,\tilde f}^2)^{1/2}\var(f)^{1/2}\|_{\LL}$, $\tau_{f^{\lambda^*}}=\|\rho_{f,\tilde f} \var(f)^{1/2}\|_{\LL}$,  the constant $C>0$ depends on the final time and on the initial data. Now, $\rho^2_{f,\tilde{f}}\to 1$ as $\varepsilon\to 0$, therefore in the fluid limit we recover the statistical error of the fine scale control variate model.

\begin{remark}
For space non homogeneous simulations,  since we typically are interested in the evolution of the moments, one can compute the optimal value of $\lambda$ with respect to a given moment $m_{\phi}(f)$ as
\be
\lambda_{\phi}^* = \frac{\cov(m_{\phi}(f),m_{\phi}(f_F^\infty))}{\var(m_{\phi}(f_F^\infty))}
\label{eq:momcv}
\ee
where $m_{\phi}(\cdot)$ is defined by \eqref{eq:mom}, 
so that $\lambda^*_{\phi}=\lambda^*_{\phi}(x,t)$. 
This approach leads to a value of $\lambda^*_{\phi}$ which depends on the moment evaluated and, since $\lambda^*_{\phi}$ is independent of the velocity, strongly reduces the storage requirements. Note that, using \eqref{eq:momcv} all estimates in this section for $\EE[f]$ translates easily into estimates for $\EE[m_{\phi}(f_F^\infty)]$. 
\end{remark}

\section{Multiple multi-scale control variates}
The approach described in Section \ref{sec:MSCV} is fully general and accordingly to the particular kinetic model studied one can select a suitable approximated solution as control variate which acts at a given scale. In this section we extend the methodology to the use of several approximated solutions as control variates with the aim to further improve the variance reduction properties of MSCV methods.

\subsection{Multiple control variates}
\label{sec:mcv}
To keep notations simple we describe the method in the case of the space homogeneous equation \eqref{eq:BH}, the extension to the space non homogeneous case follows analogously to Section \ref{sec:MSCVnh} and is discussed at the end of the Section.

Let us consider ${f}_{1}(z,v,t),\ldots,{f}_{L}(z,v,t)$ approximations of $f(z,v,t)$ solution to \eqref{eq:BH} whose properties will be discussed later. 
We can define the random variable
\be
f^{\lambda_1,\ldots,\lambda_L}(z,v,t)=f(z,v,t)-\sum_{h=1}^L \lambda_h ({f}_{h}(z,v,t)-\EE[{f}_{h}](v,t)).
\label{eq:mcv}
\ee  
Clearly \eqref{eq:mcv} is such that $\EE[f^{\lambda_1,\ldots,\lambda_L}]=\EE[f]$ and has variance
\[
\var(f^{\lambda_1,\ldots,\lambda_L})=\var(f)+\sum_{h=1}^L \lambda_h^2 \var({f}_{h})+2\sum_{h=1}^L\lambda_h\left(\sum_{k=1 \atop k\neq h}^L \lambda_k
\cov({f}_{h},{f}_{k}) - \cov(f,{f}_{h})\right).
\] 
In vector form, if we introduce notations
\[
{\llambda}=(\lambda_1,\ldots,\lambda_L)^T,\quad
{b}=(\cov(f,{f}_{1}),\ldots,\cov(f,{f}_{L}))^T
\]
we have
\be
\var(f^{\llambda})=\var(f)+\llambda^T  C \llambda - 2\llambda^T b
\label{eq:varmcv}
\ee
where $ C=( c_{ij})$, ${ c_{ij}}=\cov({f}_{i},{f}_{j})$ is the symmetric $L\times L$ covariance matrix.

\begin{theorem}
Assuming the covariance matrix is not singular, the vector
\be
\llambda^* = {C}^{-1} b,
\label{eq:lambdamcv}
\ee
minimizes the variance of $f^\llambda$ at the point $(v,t)$ and gives
\be
\var(f^{\llambda^*})= \left(1-\frac{b^T(C^{-1})^T b}{\var(f)}\right)\var(f).
\label{eq:varmvcs}
\ee
\label{th:2}
\end{theorem}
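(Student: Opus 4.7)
The plan is to treat the variance expression \eqref{eq:varmcv} as a quadratic form in $\llambda$ and minimize it by standard calculus. Since the excerpt has already derived
\[
\var(f^{\llambda})=\var(f)+\llambda^T C\llambda - 2\llambda^T b,
\]
the optimization over $\llambda\in\RR^L$ reduces to a finite-dimensional unconstrained quadratic problem, and the proof should essentially be a direct gradient computation followed by substitution.

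First I would compute the gradient $\nabla_{\llambda}\var(f^{\llambda})=2C\llambda-2b$, using symmetry of $C$, and set it to zero to obtain the candidate critical point $\llambda^*=C^{-1}b$, which is well-defined by the non-singularity hypothesis. To confirm this critical point is indeed a minimum (and not merely a stationary point), I would invoke the fact that $C$ is a covariance matrix, hence symmetric positive semidefinite; combined with the hypothesis that $C$ is nonsingular, $C$ is positive definite, so the Hessian $2C$ is positive definite and $\llambda^*$ is the unique global minimizer of \eqref{eq:varmcv}.

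Next I would substitute $\llambda^*=C^{-1}b$ back into \eqref{eq:varmcv}. Using $(C^{-1})^T C C^{-1}=(C^{-1})^T$ (from symmetry of $C$), the quadratic and linear contributions combine as
\[
(C^{-1}b)^T C (C^{-1}b) - 2(C^{-1}b)^T b = b^T(C^{-1})^T b - 2 b^T(C^{-1})^T b = -b^T(C^{-1})^T b,
\]
yielding $\var(f^{\llambda^*})=\var(f)-b^T(C^{-1})^T b$, which upon factoring $\var(f)$ produces \eqref{eq:varmvcs}.

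There is no real obstacle here: the hardest conceptual point is simply noting that positive definiteness of $C$ (guaranteed by non-singularity of a PSD matrix) is what upgrades the stationary point to a genuine minimum. One may optionally observe, as a sanity check, that for $L=1$ this reduces to Theorem~\ref{th:1}: $C=\var(\tilde f)$, $b=\cov(f,\tilde f)$, so $\llambda^*=\cov(f,\tilde f)/\var(\tilde f)$ and $b^T C^{-1} b/\var(f)=\rho_{f,\tilde f}^2$, recovering \eqref{eq:lambdas}--\eqref{eq:var2}.
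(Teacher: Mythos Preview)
Your proposal is correct and follows essentially the same approach as the paper: set the gradient of the quadratic form \eqref{eq:varmcv} to zero, invoke second-order optimality (the paper simply says ``second order optimality conditions'' where you spell out that a nonsingular covariance matrix is positive definite), and substitute back to obtain \eqref{eq:varmvcs}. Your treatment is in fact slightly more explicit than the paper's, particularly in the substitution step and the $L=1$ sanity check, but there is no substantive difference in strategy.
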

\begin{proof}
To compute the minimizing values $\lambda^*_h$, $h=1,\ldots,L$ the first order optimality conditions are found by equating to zero the partial derivatives with respect to $\lambda_h$
\be
\frac{\partial\, \var(f^{\llambda})}{\partial \lambda_h} = 0, \quad h=1,\ldots,L. 
\label{eq:min}
\ee
This corresponds to solve the following linear system
\be
\cov(f,{f}_{h}) = \sum_{k=1}^L \lambda_k \cov({f}_{h},{f}_{k}),\quad h=1,\ldots, L,
\label{eq:sys1}
\ee
or in vector form
\be
b = {C} \llambda.
\ee
Therefore, assuming the covariance matrix is not singular, we obtain the solution \eqref{eq:lambdamcv}.
It is easily shown, via the second order optimality conditions that $\llambda^*$ is
indeed the variance-minimizing choice of $\llambda$. By direct substitution in \eqref{eq:varmcv} we obtain \eqref{eq:varmvcs}.
\end{proof}

The control variate estimator based on \eqref{eq:mcv} takes the form
\be
E^{\llambda}_M[f](v,t) = E_M[f](v,t)-\sum_{h=1}^L \lambda_h \left(E_{M}[f_h](v,t)-{\bf f}_{h}(v,t)\right),
\label{eq:mscvg}
\ee
where ${\bf f}_{h}(v,t)$ is an accurate approximation of $\EE[f_h](v,t)$, and we assumed to have $M$ i.i.d. samples from the solution $f(z,v,t)$ and the control variate functions $f_h(z,v,t)$ for $h=1,\ldots,L$. 
To estimate the value of the vector $\llambda^*$ we can use directly the Monte Carlo samples as in the scalar case. 
The resulting MSCV algorithm is summarized as follows:

\begin{algorithm}[Multiple MSCV - homogeneous case]~
\begin{enumerate}
\item {\bf Sampling}: 
Sample $M$ i.i.d. initial data from
the random initial data $f_0$ and approximate these over the grid $\Delta v$. Denote these samples by $f_{\Delta v}^{k,0}$, $k=1,\ldots,M$.
\item {\bf Solving}: 
\begin{enumerate}
\item 
For each control variate and for each realization of the random
input data $f_{\Delta v}^{k,0}$, $k=1,\ldots,M$, the resulting control variate model is solved numerically by a deterministic solver with mesh width $\Delta v$. We denote the resulting ensemble of deterministic solutions for $h=1,\ldots,L$ at time $t^n$ by
\[
f_{h,\Delta v}^{k,n},\quad k=1,\ldots,M.
\]
\item For each realization $f_{\Delta v}^{k,0}$, $k=1,\ldots,M$ 
the underlying kinetic equation (\ref{eq:BH}) is solved numerically by the corresponding deterministic solver with mesh width $\Delta v$. We denote the solution at time $t^n$ by $f^{k,n}_{\Delta {\w}}$, $k=1,\ldots,M$. 
\end{enumerate}
\item {\bf Estimating}: 
\begin{enumerate}
\item
Estimate the optimal vector of values $\llambda^*$ solving
\be
C^n_M \llambda^{*,n} = b^n_M,
\label{eq:ill}
\ee
where $(C^n_M)_{ij} = \cov_M(f_{i,\Delta v}^{n},f_{j,\Delta v}^{n})$ and $(b^n_M)_i=\cov_M(f_{\Delta v}^{n},f_{i,\Delta v}^{n})$.
\item
Compute the expectation of the random solution with the control variate estimator
\be
E^{\llambda^*}_M[f^n_{\Delta v}] = \frac1{M} \sum_{k=1}^M f^{k,n}_{\Delta {\w}}-\sum_{h=1}^L \lambda^{*,n}_{h} \left(\frac1{M} \sum_{k=1}^M {f}_{h,\Delta {\w}}^{k,n}-{\bf f}_{h,\Delta v}^n\right).
\label{mcest2}
\ee
\end{enumerate}
\end{enumerate}
\label{alg:1}
\end{algorithm}
Let us remark that, if we introduce the vector $F=(F_1,\ldots,F_L)^T$, such that $F_h=f_h-\EE[f_h]$,  $\EE[F_h]=0$, $h=1,\ldots,L$,
then equation \eqref{eq:mcv} reads 
\be
f^{\lambda_1,\ldots,\lambda_L}(z,v,t)=f(z,v,t)-\sum_{h=1}^L \lambda_h F_h(z,v,t),
\label{eq:mcv2}
\ee 
and thus, the variance of $f^{\llambda^*}$ is reduced to zero if $f$ is in the span of the set of functions  $F_1,\ldots,F_L$.

Using Gram--Schmidt orthogonalization, we may assume that the $L$ components of the control variate vector $F$ are orthogonal in the $L^2$ inner product
\[
\langle f,g \rangle = \int_{\Omega} f(z) g(z) p(z) dz.
\] 
In fact, we have
\[
\langle F_h,F_k \rangle=\cov(f_h,f_k)=\cov(F_h,F_k), \quad h,k=1,\ldots, L.
\]
We can construct the vector $G=(G_1,\ldots,G_L)^T$, with orthogonal components, $\langle G_h,G_k \rangle = 0$ for $h\neq k$, as follows \cite{GV}
\be
g_h = f_h - \sum_{j=1}^{h-1} \frac{\cov(g_j,f_h)}{\var(g_j)} g_j,\qquad h=1,\ldots,L,
\label{eq:gs}
\ee
and define $G_h=g_h-\EE[g_h]$, such that $\EE[G_h]=0$, $h=1,\ldots,L$.

Then we may try to minimize the variance of the random variable 
\be
f^{\gamma_1,\ldots,\gamma_L}(z,v,t)=f(z,v,t)-\sum_{h=1}^L \gamma_h G_h(z,v,t),
\label{eq:mcv3}
\ee
which now using the orthogonality property reads 
\[
\var(f^{\gamma_1,\ldots,\gamma_L})=\var(f)+\sum_{h=1}^L \gamma_h^2 \var(g_h)-2\sum_{h=1}^L \gamma_h\cov(f,g_h).
\] 
Denoting with $\Gamma=(\gamma_1,\ldots,\gamma_L)^T$, $D$ the diagonal matrix with elements $d_h=\var(g_h)$ and with $e$ the vector with components $e_h=\cov(f,g_h)$ we get
\be
\var(f^\Gamma)=\var(f)+\Gamma^T D \Gamma - 2\Gamma^T e.
\ee
By the same arguments as in Theorem \ref{th:2}, if the matrix $D$ is not singular, we have that the vector $\Gamma^*=D^{-1}e$ minimizes the above variance.
Thus we have proved the following result.
\begin{theorem}
If the control variate vector $G=(G_1,\ldots,G_L)^T$ in \eqref{eq:mcv3} has orthogonal components, $\langle G_h,G_k \rangle = 0$ for $h\neq k$, then if $\langle G_h,G_h \rangle \neq 0$ the vector $\Gamma^*$ with components
\be
\gamma^*_h = \frac{\cov(f,g_h)}{\var(g_h)},\qquad h=1,\ldots,L,
\label{eq:gammah}
\ee
minimizes the variance of $f^\Gamma$ at the point $(v,t)$ and gives
\be
\var(f^{\Gamma_*})=\left(1-\sum_{h=1}^L \rho^2_{f,g_h}\right)\var(f)
\label{eq:var3}
\ee
where $\rho_{f,g_h}\in [-1,1]$ is the correlation coefficient between $f$ and $g_h$. 
\end{theorem}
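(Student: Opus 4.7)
The plan is to recognize that the present statement is a direct specialization of Theorem~\ref{th:2}, with the extra orthogonality hypothesis reducing the linear system to an already-diagonal one; the variance formula then collapses to a sum of squared correlations by a one-line calculation.

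First I would match the setup to Theorem~\ref{th:2}: in the notation there, the covariance matrix $C$ is replaced by $D$ and the vector $b$ by $e$, since by the orthogonality assumption $\cov(g_h,g_k)=\langle G_h,G_k\rangle=0$ for $h\neq k$, while $\cov(f,g_h)=e_h$. The non-degeneracy hypothesis $\langle G_h,G_h\rangle=\var(g_h)\neq 0$ ensures that $D$ is invertible. Applying Theorem~\ref{th:2} directly gives the unique minimizer $\Gamma^{*}=D^{-1}e$, whose $h$-th component is precisely $\gamma_h^{*}=\cov(f,g_h)/\var(g_h)$, establishing \eqref{eq:gammah}. Alternatively, since $D$ is diagonal the quadratic form $\var(f^{\Gamma})=\var(f)+\Gamma^{T}D\Gamma-2\Gamma^{T}e$ separates into $L$ independent univariate quadratics in the $\gamma_h$, each minimized at $e_h/d_h$; this gives a self-contained derivation without appealing to the general theorem.

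For \eqref{eq:var3} I would simply substitute $\Gamma^{*}$ into $\var(f^{\Gamma})=\var(f)+\Gamma^{T}D\Gamma-2\Gamma^{T}e$. Because of diagonality, each term yields $(\gamma_h^{*})^{2}\var(g_h)-2\gamma_h^{*}\cov(f,g_h)=-\cov(f,g_h)^{2}/\var(g_h)$, so that
\[
\var(f^{\Gamma^{*}})=\var(f)-\sum_{h=1}^{L}\frac{\cov(f,g_h)^{2}}{\var(g_h)}.
\]
Invoking the definition $\rho_{f,g_h}=\cov(f,g_h)/\sqrt{\var(f)\var(g_h)}$ rewrites each summand as $\rho_{f,g_h}^{2}\var(f)$, and factoring out $\var(f)$ produces \eqref{eq:var3}.

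There is no real obstacle: the orthogonalization performed via \eqref{eq:gs} was precisely designed to diagonalize the covariance matrix appearing in Theorem~\ref{th:2}, so both the optimality conditions and the variance expression decouple into $L$ scalar problems structurally identical to the single control-variate case of Theorem~\ref{th:1}. The only sanity check worth flagging is that the coefficient $1-\sum_{h}\rho_{f,g_h}^{2}$ is automatically non-negative, since it equals $\var(f^{\Gamma^{*}})/\var(f)\geq 0$; this can also be read as a Bessel-type inequality expressing that $f$ is being projected onto the orthogonal system $\{g_h\}_{h=1}^{L}$ in the $L^{2}(\Omega;p(z)dz)$ inner product.
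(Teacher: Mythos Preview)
Your proposal is correct and follows essentially the same approach as the paper: the paper's proof consists of the single sentence ``By the same arguments as in Theorem~\ref{th:2}, if the matrix $D$ is not singular, we have that the vector $\Gamma^*=D^{-1}e$ minimizes the above variance,'' together with the observation that orthogonality makes $D$ diagonal. Your write-up is in fact more complete, since you spell out the substitution yielding \eqref{eq:var3} and note the non-negativity of $1-\sum_h\rho_{f,g_h}^2$, neither of which the paper makes explicit.
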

Estimating the orthogonal set of control variates using $M$ samples by
\be
E^{\Gamma}_M[f](v,t) = E_M[f](v,t)-\sum_{h=1}^L \gamma_h \left(E_{M}[g_h](v,t)-{\bf g}_h(v,t)\right),
\label{eq:mscvg2}
\ee
where ${\bf g}_h(v,t)=\EE[g_h](v,t)$ or its accurate approximation, 
in combination with a deterministic solver satisfying \eqref{eq:det}, one obtains the following 
result \cite{DPms,MSS}.
\begin{proposition}
Consider a deterministic scheme which satisfies \eqref{eq:det} in the velocity space for the solution of the homogeneous kinetic equation \eqref{eq:BH} with deterministic interaction operator $Q(f,f)$ and random initial data $f({z},{\w},0)=f_0({z},{\w})$. Assume that the initial data is sufficiently regular.

Then, the MSCV estimate defined in \eqref{eq:mscvg2} with the optimal values given by \eqref{eq:gammah} satisfies the error bound 

\bea
\|\EE[f](\cdot,t^n)-{E}^{\Gamma^*}_M[f^n_{\Delta v}]\|_{{\LHBi}} \leq  C\left\{\sigma_{f^{\Gamma^*}} M^{-1/2}+\Delta v^q\right\} 
\label{eq:errHMMC2b}
\eea
where $\sigma_{f^{\Gamma^*}}=\left\|\left(1-\sum_{h=1}^L \rho^2_{f,g_h}\right)^{1/2}\var(f)^{1/2}\right\|_{\LH}$,
and $C>0$ depends on the final time and on the initial data. 
\end{proposition}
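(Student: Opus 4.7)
The plan is a standard triangle inequality split between the deterministic discretization error and the statistical (MSCV) error, combined with the variance formula supplied by the previous theorem.

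First I would insert the expectation of the fully discrete solution as a pivot:
\[
\|\EE[f](\cdot,t^n)-E^{\Gamma^*}_M[f^n_{\Delta v}]\|_{\LHBi}
\le \|\EE[f](\cdot,t^n)-\EE[f^n_{\Delta v}]\|_{\LHBi}
+\|\EE[f^n_{\Delta v}]-E^{\Gamma^*}_M[f^n_{\Delta v}]\|_{\LHBi}.
\]
For the first (bias/deterministic) term, since it is deterministic the outer $L^2(\Omega)$ collapses and by Jensen's inequality we can pull $\EE$ out of the $L^1_2$ norm; then \eqref{eq:det} in the space-homogeneous setting gives a bound of order $\Delta v^q$, with a constant that depends on $t^n$ and on moments of the initial data.

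For the second (statistical) term I would use that the multiple control variate estimator $E^{\Gamma^*}_M[f^n_{\Delta v}]$ is unbiased for $\EE[f^n_{\Delta v}]$ for any (even random, sample-estimated) choice of $\Gamma$, so the squared $L^2(\Omega)$-error at a fixed velocity $v$ is exactly the variance of the estimator. By independence of the $M$ samples and the definition \eqref{eq:mcv3} of $f^{\Gamma^*}$,
\[
\EE\bigl[(\EE[f^n_{\Delta v}]-E^{\Gamma^*}_M[f^n_{\Delta v}])^2\bigr]
= \tfrac{1}{M}\var(f^{n,\Gamma^*}_{\Delta v}).
\]
Applying Theorem~3 (equation \eqref{eq:var3}) pointwise in $v$ gives
\[
\var(f^{n,\Gamma^*}_{\Delta v})(v)=\Bigl(1-\sum_{h=1}^L\rho^2_{f,g_h}\Bigr)\var(f^n_{\Delta v})(v).
\]
Taking the square root and integrating against $(1+|v|)^2\,dv$ produces exactly the quantity $\sigma_{f^{\Gamma^*}}$ defined in the statement (up to the harmless replacement of $f$ by its discrete version, which is controlled by the deterministic estimate). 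Multiplying by $M^{-1/2}$ yields the $\sigma_{f^{\Gamma^*}}M^{-1/2}$ contribution.

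Adding the two bounds gives \eqref{eq:errHMMC2b}. The only subtle point, and the one I would flag as the main obstacle, is justifying that the sample-based estimate $\Gamma^*_M$ of the optimal $\Gamma^*$ does not destroy unbiasedness nor inflate the variance to leading order: this requires either working, as in \cite{DPms,MSS}, with the idealized (non-sampled) $\Gamma^*$ and treating the estimation of $\Gamma^*_M$ as a higher-order perturbation, or invoking a central-limit-type argument showing that the extra error scales as $o(M^{-1/2})$. I would follow the former route, so that the bound states precisely what is written. A secondary minor technicality is that $\sigma_{f^{\Gamma^*}}$ is defined through $\var(f)$ rather than $\var(f^n_{\Delta v})$; the difference is absorbed into the $\Delta v^q$ term via \eqref{eq:det} and the continuity of $\var$ and $\rho$ with respect to perturbations of $f$.
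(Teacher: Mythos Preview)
Your argument is correct and follows the same triangle-inequality strategy as the paper, but with a different choice of pivot. You insert $\EE[f^n_{\Delta v}]$, splitting into a deterministic bias term and a statistical term on the \emph{discrete} solution. The paper instead inserts $E^{\Gamma^*}_M[f](\cdot,t^n)$, the MSCV estimator applied to the \emph{continuous} solution, so the split is
\[
\|\EE[f](\cdot,t^n)-E^{\Gamma^*}_M[f](\cdot,t^n)\|_{\LHBi}
+\|E^{\Gamma^*}_M[f](\cdot,t^n)-E^{\Gamma^*}_M[f^n_{\Delta v}]\|_{\LHBi}.
\]
With this pivot the statistical term is computed directly on $f$, so \eqref{eq:var3} yields exactly the $\sigma_{f^{\Gamma^*}}$ of the statement without the ``harmless replacement of $f$ by its discrete version'' that you flag; the discretization error instead appears in the second term, which is handled by linearity of the estimator together with \eqref{eq:det}. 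Both routes are valid and equally short; the paper's pivot simply sidesteps the secondary technicality you identified. Your remark about working with the idealized $\Gamma^*$ and ignoring its sample estimation is exactly what the paper does as well.
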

\begin{proof}
The bound follows from
\begin{eqnarray*}
\nonumber
\|\EE[f](\cdot,t^n)-{E}^{\Gamma^*}_M[f^{n}_{\Delta {\w}}]\|_{{\LHBi}}&& \\
\nonumber
&& \hskip -2cm \leq \|\EE[f](\cdot,t^n)-{E}^{\Gamma^*}_M[f](\cdot,t^n)\|_{{\LLBi}}\\
\\[-.25cm]
\nonumber
&& \hskip -2cm + 
\|{E}^{\Gamma^*}_M[f](\cdot,t^n)-{E}^{\Gamma^*}_M[f_{\Delta \w}^n]\|_{{\LLBi}}\\
\nonumber
&& \hskip -2cm \leq  C\left\{\sigma_{f^{\Gamma^*}}  M^{-1/2}+\Delta v^q\right\} , 
\end{eqnarray*}
where the Monte Carlo bound in the first term now make use of \eqref{eq:var3} and the second term is bounded by the discretization error of the deterministic scheme.
\end{proof}

Here we ignored the statistical errors due to the approximation of the control variates expectations and to the estimate of the vector $\Gamma^*$.

\subsubsection{Two control variates}\label{sec:twoc}
To exemplify the approach, it is interesting to consider the case $L=2$, where $f_1 = f_0$, the initial data, and $f_2 = f^\infty$, the stationary state. In this case we know that $f$ is in the span of the control variates at $t=0$ and as $t\to\infty$.

A straightforward computation shows that the optimal values $\lambda^*_1$ and $\lambda_2^*$ are given by
\begin{eqnarray}
\nonumber
\lambda_1^* &=& \frac{\var(f^\infty)\cov(f,f_0)-\cov(f_0,f^\infty)\cov(f,f^\infty)}{\Delta},\\[-.2cm]
\label{eq:l12}
\\[-.2cm]
\nonumber
\lambda_2^* &=& \frac{\var(f_0)\cov(f,f^\infty)-\cov(f_0,f^\infty)\cov(f,f_0)}{\Delta},
\end{eqnarray}
where $\Delta = \var(f_0)\var(f^\infty)-\cov(f_0,f^\infty)^2\neq 0$.

Using $M$ samples for both control variates the optimal estimator reads
\be
E^{\lambda^*_1,\lambda^*_2}_M[f](v,t) =E_M[f](v,t)-\lambda^*_1 \left(E_M[f_0](v)-{\bf f}_0(v)\right)-\lambda^*_2 \left(E_M[f^\infty](v)-{\bf f}^\infty(v)\right).
\label{eq:nest2b}
\ee

Now, at $t=0$ since $f(z,v,0)=f_0(z,v)$ we clearly have $\lambda_1^*=1$ and $\lambda_2^*=0$ so that the estimator \eqref{eq:nest2b} is exact \[E^{1,0}_M[f](v,0)={\bf f}_0(v).\] Moreover, 
by the same arguments as in Theorem \ref{th:1}, for large times since $f(z,v,t)\to f^\infty(z,v)$ from \eqref{eq:l12} we get 
\[
\lim_{t\to\infty} \lambda_1^* = 0, \qquad \lim_{t\to\infty} \lambda_2^* = 1,
\]
and thus, the variance of the estimator vanishes asymptotically in time
\[
\lim_{t\to\infty} E^{\lambda^*_1,\lambda^*_2}_M[f](v,t) = E^{0,1}_M[f](v)={\bf f}^\infty(v).
\]
We emphasize that this last example can be seen as a generalization of the scalar case based on the BGK model. In fact, the estimator  \eqref{eq:nest2} based on \eqref{eq:BGKexa}-\eqref{eq:BGKexas}  can be written in the form \eqref{eq:nest2b} as
\[
E^{\lambda^*}_M[f](v,t) =E_M[f](v,t)-\tilde\lambda^*_1 \left(E_M[f_0](v)-{\bf f}_0(v)\right)-\tilde\lambda^*_2 \left(E_M[f^\infty](v)-{\bf f}^\infty(v)\right)
\]
where
\[
\tilde\lambda_1^* = e^{-t} \lambda^*,\qquad \tilde\lambda_2^* = (1-e^{-t}) \lambda^*.
\]
Therefore, the scalar control variate based on the BGK model can be understood as a suboptimal solution to the our minimization problem for the control variates $f_0$ and $f^\infty$. In particular, if the solution $f$ has the form \eqref{eq:BGKexa}, namely the full model is the BGK model, then it is in the span generated by $f_0$ and $f^\infty$ and we obtain $\lambda_1^*=\tilde\lambda_1^*$ and  $\lambda_2^*=\tilde\lambda_2^*$.

\subsection{Hierarchical multiple control variates}
The multiple control variate approach just described presents some limitations. First the linear system \eqref{eq:ill} may be difficult to solve due to ill conditioning of the covariance matrix and additionally the  estimation of the coefficients in the matrix requires the use of a control variate independent number of samples. This last aspect can represent a serious limitation in space non homogeneous situations in which the control variates may be originated from models that operate at the various spatio-temporal scales of the problem with different levels of complexity.   

To overcome such drawbacks here we formulate a recursive construction of the multiple control variate estimator \eqref{eq:mscvg}. To this aim, let us assume that the control variates $f_1,\ldots,f_L$ represent kinetic models with an increasing level of fidelity. Under this assumption the control variate $f_1$ represents the less accurate model whereas the control variate $f_L$ is the closer model to the full model $f$.

To start with, we estimate $\EE[f]$ with $M_L$ samples using $f_L$ as control variate
\[
\EE[f]\approx E_{M_L}[f]-{\hat \lambda}_L\left(E_{M_L}[f_L]-\EE[f_L]\right).
\]
Next, to estimate of $\EE[f_L]$ we use $M_{L-1}\gg M_L$ samples and consider $f_{L-1}$ as control variate 
\[
\EE[f_L]\approx E_{M_{L-1}}[f_L]-{\hat \lambda}_{L-1}\left(E_{M_{L-1}}[f_{L-1}]-\EE[f_{L-1}]\right).
\]
Similarly, in a recursive way we can construct estimators for the remaining expectations of the control variates $\EE[f_{L-2}],\EE[f_{L-3}],\ldots,\EE[f_2]$ using respectively $M_{L-3}\ll M_{L-4}\ll\ldots \ll M_1$ samples
until 
\[
\EE[f_2]\approx E_{M_{1}}[f_2]-{\hat \lambda}_{1}\left(E_{M_{1}}[f_{1}]-\EE[f_{1}]\right),
\]
and we stop with the final estimate
\[
\EE[f_1]\approx E_{M_0}[f_1],
\]
with $M_0 \gg M_1$.

By combining the estimators of each stage together we obtain the recursive MSCV estimator
\begin{eqnarray}
\nonumber
E_L^{{r,\hat \llambda}}[f] &=& E_{M_L}[f] -{\hat \lambda}_L\left(E_{M_L}[f_L]-E_{M_{L-1}}[f_L]\right.\\
\nonumber
&+&{\hat \lambda}_{L-1}\left(E_{M_{L-1}}[f_{L-1}]-E_{M_{L-2}}[f_{L-1}]\right.\\[-.3cm]
\label{eq:lambdar}
\\[-.2cm]
\nonumber
&\ldots&\\
\nonumber
&+&{\hat \lambda}_{1}\left.\left.\left(E_{M_{1}}[f_{1}]-E_{M_{0}}[f_{1}]\right) \ldots\right)\right).
\end{eqnarray}
Now if we compute the optimal values ${\hat \lambda}_h^*$ independently for each recursive stage by ignoring the errors due to the approximations of the various expectations, if $\var(f_{h})\neq 0$, we obtain
\be
{\hat \lambda}_h^* = \frac{\cov(f_{h+1},f_{h})}{\var(f_{h})},\quad h=1,\ldots,L
\label{eq:lambdasr}
\ee
where we used the notation $f_{L+1}=f$. We refer to this set of values, which avoids the solution of the resulting linear system, as quasi-optimal.
Note that, since the control variates $f_{h+1}$ and $f_h$ are known on the same set of samples $M_h$ the values ${\hat \lambda}^*_h$ can be estimated using \eqref{eq:varm}-\eqref{eq:covm}.  

The estimator \eqref{eq:lambdar} can be recast in the form 
\bea
\nonumber
E_L^{{\hat \llambda}}[f] &=&  E_{M_{L}}[f_{L+1}]-\sum_{h=1}^{L}\lambda_h(E_{M_h}[f_h]-E_{M_{h-1}}[f_h])\\[-.2cm]
\label{eq:mscvgr}
\\[-.2cm]
\nonumber
&=&\lambda_1 E_{M_0}[f_1] + \sum_{h=1}^{L} (\lambda_{h+1} E_{M_h}[f_{h+1}]-\lambda_{h} E_{M_{h}}[f_{h}]),
\eea
where we defined 
\be
\lambda_h=\prod_{j=h}^L \hat\lambda_j,\quad h=1,\ldots,L,\qquad \lambda_{L+1}=1.
\label{eq:lambdah}
\ee
Since by the central limit theorem \cite{Lo77,HH} we have $\var(E_M[f])=M^{-1}\var(f)$, using the independence of the estimators $E_{M_h}[\cdot]$, $h=0,\ldots,L$, the total variance of the estimator \eqref{eq:mscvgr} is 
\bea
\nonumber
\var(E_L^{{\hat \llambda}}[f])&=&\lambda_1^2 M_0^{-1} \var(f_1) \\[-.2cm]
\\[-.2cm]
\nonumber
&+& \sum_{h=1}^{L} M_h^{-1}\left\{\lambda_{h+1}^2\var(f_{h+1})+\lambda_{h}^2\var(f_{h})-2\lambda_{h+1}\lambda_{h}\cov(f_{h+1},f_h)\right\}.
\eea
Now, the first order optimality conditions 
\[
\frac{\partial\,\var(E_L^{{\hat \llambda}}[f])}{\partial \lambda_h} = 0,\qquad h=1,\ldots,L
\]
leads to the tridiagonal system for $h=1,\ldots,L$
\[
M_{h-1}^{-1}\left\{\lambda_h\var(f_h)-\lambda_{h-1} \cov(f_h,f_{h-1})\right\}+
M_{h}^{-1}\left\{\lambda_h \var(f_h)-\lambda_{h+1}\cov(f_{h+1},f_{h})\right\}=0,
\]
or equivalently 
\be
\lambda_h \var(f_h) - \lambda_{h-1} \frac{M_{h}}{M_{h-1}+M_{h}} \cov(f_{h},f_{h-1}) -
\lambda_{h+1}\frac{M_{h-1}}{M_{h-1}+M_{h}} \cov(f_{h+1},f_{h}) =0
\label{eq:sys2}
\ee
where we assumed $\lambda_{0}=0$ and $\lambda_{L+1}=1$. The resulting system can be solved efficiently by the usual Thomas algorithm for tridiagonal systems provided $\var(f_h)\neq 0$, $h=1,\ldots,L$.

System \eqref{eq:sys2} can be rewritten as
\[
\lambda_h \var(f_h)  -
\lambda_{h+1}\cov(f_{h+1},f_{h}) =\frac{M_{h}}{M_{h-1}+M_{h}}\left(\lambda_{h-1}  \cov(f_{h},f_{h-1}) + \lambda_{h+1} \cov(f_{h+1},f_{h})\right)
\]
which, reverting to the original control variate variables becomes 
\[
\hat\lambda_h \var(f_h)  -
\cov(f_{h+1},f_{h}) =\frac{M_{h}}{M_{h-1}+M_{h}}\left(\hat\lambda_{h}\hat\lambda_{h-1}  \cov(f_{h},f_{h-1}) +  \cov(f_{h+1},f_{h})\right).
\]
Thus, the quasi-optimal values \eqref{eq:lambdasr} solves the above system  up to $O({M_{h}}/({M_{h-1}+M_{h}}))$.

\begin{theorem}
The vector $\Lambda^*=(\lambda_1^*,\ldots,\lambda_L^*)^T$ solution of the tridiagonal system \eqref{eq:sys2} minimizes the variance of the estimator \eqref{eq:mscvgr}. In particular the vector $\hat \Lambda^*=(\hat\lambda_1^*,\ldots,\hat\lambda_L^*)^T$ of quasi-optimal solutions given by \eqref{eq:lambdasr} is such that 
\be
\prod_{j=h}^L \hat\lambda^*_j=\lambda^*_h + O\left(\bar{\mu}_h\right),\quad h=1,\ldots,L
\label{th:ll}
\ee
where $\bar{\mu}_h = \displaystyle\max_{h \leq k \leq L}\left\{\displaystyle\frac{M_{k}}{M_{k-1}+M_{k}}\right\}$.
\end{theorem}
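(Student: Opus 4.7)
The plan is to handle the two claims separately, with the first being essentially a bookkeeping argument and the second reducing to a perturbation analysis of the tridiagonal system \eqref{eq:sys2}.

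For the minimization claim, I would simply observe that \eqref{eq:sys2} was derived in the paragraph preceding the theorem as the first-order optimality conditions for $\var(E_L^{\hat\llambda}[f])$ written in the variables $\lambda_1,\ldots,\lambda_L$. To conclude that the stationary point is a minimum, I would verify that the Hessian of the quadratic form in $\Lambda$ is positive definite. Grouping the contributions in $\var(E_L^{\hat\llambda}[f])$ into perfect squares of the form $M_h^{-1}\var(\lambda_{h+1} f_{h+1} - \lambda_h f_h)$ (plus a boundary term in $\lambda_1$) makes this transparent, since each $\var(\cdot)$ is nonnegative and the only way the quadratic form can vanish is if $\lambda_h = 0$ for all $h$ (using $\var(f_h) \neq 0$ and non-collinearity, so that Cauchy--Schwarz gives strict diagonal dominance of the tridiagonal matrix). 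Hence $\Lambda^*$ is the unique minimizer.

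For the error estimate \eqref{th:ll}, set $\tilde\lambda_h = \prod_{j=h}^{L} \hat\lambda^*_j$, so that $\tilde\lambda_h \,\var(f_h) = \cov(f_{h+1},f_h)\,\tilde\lambda_{h+1}$ by the definition \eqref{eq:lambdasr} of the quasi-optimal values. Plugging $\tilde\Lambda$ into the $h$-th equation of \eqref{eq:sys2} and using this identity, I would show by direct computation that the residual is
\begin{equation*}
r_h \;=\; \frac{M_h}{M_{h-1}+M_h}\bigl(\tilde\lambda_{h+1}\cov(f_{h+1},f_h)-\tilde\lambda_{h-1}\cov(f_h,f_{h-1})\bigr),
\end{equation*}
with the conventions $\tilde\lambda_0 = 0$, $\tilde\lambda_{L+1}=1$. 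Since all covariances and the $\tilde\lambda_k$ for $k\ge h$ are bounded independently of the sample sizes, $|r_h| = O(M_h/(M_{h-1}+M_h)) \leq O(\bar\mu_h)$.

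The remaining task is to convert this residual bound into the componentwise bound on $\tilde\lambda_h - \lambda_h^*$. My plan is to run backward substitution from $h=L$ down to $h=1$, which is the natural direction because $\lambda_{L+1}=1$ is fixed and the residual at level $L$ involves only $\tilde\lambda_{L-1}$ and $\lambda_{L-1}^*$ of unknown quantities. At level $L$, division by $\var(f_L)\neq 0$ immediately yields $\tilde\lambda_L - \lambda_L^* = O(\bar\mu_L)$. For $h<L$, expressing the $h$-th equation for both $\Lambda^*$ and $\tilde\Lambda$ and subtracting produces a recursion of the form $(\tilde\lambda_h - \lambda_h^*)\var(f_h) = \alpha_h (\tilde\lambda_{h-1}-\lambda_{h-1}^*) + \beta_h (\tilde\lambda_{h+1}-\lambda_{h+1}^*) + r_h$ with $|\alpha_h|,|\beta_h|$ bounded by the corresponding covariances times fractions $\le 1$, which combined with $|r_h|=O(\bar\mu_h)$ and the monotonicity $\bar\mu_h \ge \bar\mu_{h+1}$ delivers \eqref{th:ll} by induction.

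The main obstacle I anticipate is this last step: getting the \emph{componentwise} bound with the index-dependent quantity $\bar\mu_h$ rather than a uniform $\bar\mu_1$-bound. A naive stability estimate for the tridiagonal system would only give the latter; the sharper statement requires exploiting the fact that the residual $r_h$ is localized in the sense that it vanishes to order $\bar\mu_h$, combined with the boundedness of the off-diagonal multipliers in the recursion. Handling this rigorously is the technical heart of the argument, but the backward-substitution structure together with the monotonicity of $\bar\mu_h$ in $h$ makes the induction go through.
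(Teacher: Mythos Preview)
Your approach is correct in substance but takes a more hands-on route than the paper. Both arguments hinge on the same structural fact, namely that $\tilde\Lambda^*$ (with components $\prod_{j=h}^L\hat\lambda_j^*$) satisfies the \emph{unperturbed} upper bidiagonal system $\hat C\tilde\Lambda^*=\hat b$, where $\hat C$ has diagonal entries $\var(f_h)$ and superdiagonal entries $-\cov(f_{h+1},f_h)$. The paper exploits this directly: writing the full system as $(\hat C+{\cal M}C_0)\Lambda^*=(I-{\cal M})\hat b$ with ${\cal M}={\rm diag}(\mu_1,\ldots,\mu_L)$ and subtracting gives $\tilde\Lambda^*-\Lambda^*=\hat C^{-1}{\cal M}(\hat b+C_0\Lambda^*)$. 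Since $\hat C^{-1}$ is upper triangular and ${\cal M}$ is diagonal, the $h$-th component of the right-hand side only involves $\mu_h,\ldots,\mu_L$, which yields \eqref{th:ll} in one line. Your scalar backward induction reproduces this reasoning componentwise; the ``obstacle'' you flag (the forward coupling through $\alpha_h(\tilde\lambda_{h-1}-\lambda_{h-1}^*)$) dissolves once you make explicit that $\alpha_h=\mu_h\cov(f_h,f_{h-1})$ is itself $O(\mu_h)$, so this term is $O(\mu_h)$ provided $\Lambda^*$ is bounded---an a priori fact both arguments use tacitly. In short: your plan works, but the matrix formulation avoids the recursive bookkeeping and makes the index-dependent bound $\bar\mu_h$ transparent via upper triangularity rather than via an induction on $h$. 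Your additional verification of positive definiteness of the Hessian is a welcome detail the paper omits (it simply says the minimizer is ``by construction'').
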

\begin{proof}
We can rewrite \eqref{eq:sys2} in the form $C \Lambda = b$ where $C=\hat C + {\cal M} C_0$ with
\[
{\hat C} = \left(
\begin{array}{cccc}
 c_{11} &  - c_{12}  &  &0\\
 0 & \ddots   &\ddots   &\\
     & \ddots   & \ddots  & -c_{L-1 L}\\
0  &   &0 & c_{LL}  
\end{array}
\right),\qquad C_0=\left(
\begin{array}{cccc}
 0 &  c_{12}  &  &0\\
 -c_{21} & \ddots   &\ddots   &\\
     & \ddots   & \ddots  & c_{L-1 L}\\
0  &   &-c_{L L-1} & 0  
\end{array}
\right),
\]
$c_{ij}=\cov(f_{i},f_{j})$, ${\cal M}={\rm diag}\{\mu_1,\ldots,\mu_L\}$, $\mu_h={M_{h}}/({M_{h-1}+M_{h}})$ and $b = (I-{\cal M}){\hat b}$, ${\hat b}=\left(0,\ldots,0,\cov(f,f_L)\right)^T$. By construction the vector $\Lambda^*$, such that $(\hat C + {\cal M} C_0)\Lambda^* = (I-{\cal M}){\hat b}$, minimizes the variance of \eqref{eq:mscvgr}.

Let us define the vector $\tilde\Lambda^*$ of elements $\prod_{j=h}^L \hat\lambda^*_j$, $h=1,\ldots,L$ where $\hat\lambda^*_j$ are given by \eqref{eq:lambdasr}. We have $\hat C \tilde \Lambda^* = \hat b$ 
so that 
\[
\hat{C}(\Lambda^*-{\tilde \Lambda^*})=-{\cal M}(\hat b + C_0 \Lambda^*)
\]
therefore if $\var(f_h)\neq 0$, $h=1,\ldots,L$ we can write
\[
{\tilde \Lambda^*}=\Lambda^*+{\hat C}^{-1}{\cal M}(\hat b + C_0 \Lambda^*).
\]
Now, since ${\hat C}^{-1}$ is upper triangular and ${\cal M}$ diagonal we have  
 \eqref{th:ll}. 
\end{proof}

We can summarize the details of the method, when applied to the space homogeneous problem \eqref{eq:BH} in combination with a deterministic solver, in the following algorithm.

\begin{algorithm}[Recursive multiple MSCV - homogeneous case]~
\begin{enumerate}
\item {\bf Sampling}: 
For each control variate $f_h$, we draw a number
$M_h$ of i.i.d. samples from the random initial data $f_0$ and approximate these over the mesh $\Delta {\w}$. 
Denote these control variate dependent number of samples for $h=1,\ldots,L$ by
$$f_{h,\Delta v}^{k,0},\quad k=1,\ldots,M_h$$ and set $f_{\Delta v}^{k,0}=f_{L,\Delta v}^{k,0},\quad k=1,\ldots,M_L$.
\item {\bf Solving}: 
\begin{enumerate}
\item 
For each control variate and for each realization of the random
input data $f_{h,\Delta v}^{k,0}$, $k=1,\ldots,M_h$, the resulting control variate model is solved numerically by a deterministic solver with mesh widths $\Delta v$. We denote the resulting ensemble of deterministic solutions for $h=1,\ldots,L$ at time $t^n$ by
\[
f_{h,\Delta v}^{k,n},\quad k=1,\ldots,M_h.
\]
\item For each realization $f_{\Delta v}^{k,0}$, $k=1,\ldots,M_L$ 
the underlying kinetic equation (\ref{eq:FP_general}) is solved numerically by the corresponding deterministic solver with mesh widths $\Delta v$. We denote the solution at time $t^n$ by $f^{k,n}_{\Delta {\w}}$, $k=1,\ldots,M_L$. 
\end{enumerate}
\item {\bf Estimating}: 
\begin{enumerate}
\item
Estimate the optimal vector of values $\hat\llambda^*$ as
\be
\hat\lambda^{*,n}_{h}=\frac{\cov_{M_h}(f_{h+1,\Delta v}^{n},f_{h,\Delta v}^{n})}{\var_{M_h}(f_{h,\Delta v}^{n})},\quad h=1,\ldots,L
\label{eq:hlambdam}
\ee
where we used the notation $f_{L+1,\Delta v}^{k,n}=f_{\Delta v}^{k,n}$, $k=1,\ldots,M_L$.
\item
Compute the expectation of the random solution with the control variate estimator
\be
E^{\hat\llambda^*}_L[f^n_{\Delta v}] = \frac1{M_L} \sum_{k=1}^{M_L} f^{k,n}_{\Delta {\w}}-\sum_{h=1}^L \lambda^{*,n}_{h} \left(\frac1{M_h} \sum_{k=1}^{M_h} {f}_{h,\Delta {\w}}^{k,n}-{\bf f}_{h,\Delta v}^n\right),
\label{mcestr2}
\ee
where
\[
{\bf f}_{h,\Delta v}^n=\frac1{M_{h-1}} \sum_{k=1}^{M_{h-1}} {f}_{h,\Delta {\w}}^{k,n},\qquad \lambda^{*,n}_h=\prod_{j=h}^L \hat\lambda^{*,n}_j,\quad h=1,\ldots,L. 
\]
\end{enumerate}
\end{enumerate}
\label{alg:2}
\end{algorithm}

Regarding the error bound that we obtain using \eqref{eq:mscvgr} with the values given by \eqref{eq:lambdasr} le us observe that if, at each stage, we denote
\[
{E}^{\hat\lambda_h}_{M_h}[f_h] = E_{M_h}[f_h]-{\hat\lambda_h}\left(E_{M_h}[f_{h-1}]-E_{M_{h-1}}[f^n_{h-1}]\right)
\]
then by the error bound \eqref{eq:errHMMC2} we have
\[
\|\EE[f_h](\cdot,t)-{E}^{\hat\lambda_h^*}_{M_h}[f_{h}](\cdot,t)\|_{{\LHBi}} \leq  C_h\left\{\sigma_h
 M_h^{-1/2}+\tau_h M^{-1/2}_{h-1}\right\} 
\]
where $C_h>0$ is a suitable constant and we defined
\be
\sigma_h=\left\|\left(1-\rho^2_{f_h,f_{h-1}}\right)^{1/2}\var(f_h)^{1/2}\right\|_{\LH},\qquad \tau_h=\|\rho_{f_h,f_{h-1}}\var(f_h)^{1/2}\|_{\LH}.
\ee
Using the recursive estimator, in combination with a deterministic solver satisfying \eqref{eq:det}, we can write
\bea
\nonumber
&& \|\EE[f](\cdot,t^n)-{E}^{\hat\Lambda^*}_{L}[f^n_{\Delta v}]\|_{{\LHBi}} \\
\nonumber
&& \hskip 4cm \leq  
\|\EE[f](\cdot,t^n)-{E}^{\hat\Lambda^*}_{L}[f](\cdot,t^n)\|_{{\LHBi}}\\
\nonumber
&& \hskip 4cm +
\|{E}^{\hat\Lambda^*}_{L}[f](\cdot,t^n)-{E}^{\hat\Lambda^*}_{L}[f^n_{\Delta v}]\|_{{\LHBi}}.
\eea 
The second term is bounded as usual by the discretization error of the scheme, whereas, ignoring the statistical errors in estimating the quasi-optimal vector of values $\hat\Lambda^*$, the first term can be estimated recursively as
\bea
\nonumber
&&\|\EE[f](\cdot,t^n)-{E}^{\hat\Lambda^*}_{L}[f](\cdot,t^n)\|_{{\LHBi}}\\
\nonumber
&& \hskip 4cm \leq C_L 
\left\{\sigma_L M_L^{-1/2}\right.\\
\nonumber
&& \hskip 4cm +
|\|\hat\lambda_L^*(\EE[f_{L-1}](\cdot,t^n)-{E}^{\hat\Lambda^*}_{L-1}[f_{L-1}](\cdot,t^n))\|_{{\LHBi}}\bigg\}
\\
\label{eq:errREC}
&& \hskip 4cm \leq C_{L}
\left\{\sigma_L M_L^{-1/2}+\tau_L C_{L-1}\left\{\sigma_{L-1} M_{L-1}^{-1/2}\right.\right.\\
\nonumber
&& \hskip 4cm +  \|\hat\lambda_{L-1}^*(\EE[f_{L-2}](\cdot,t^n)-{E}^{\hat\Lambda^*}_{L-2}[f_{L-2}](\cdot,t^n))\|_{{\LHBi}}\bigg\}
\\
\nonumber
&&\hskip 4cm \ldots\\
\nonumber
&&\hskip 4cm \leq C \left(\sum_{h=1}^L \xi_h \sigma_h M_h^{-1/2}+\xi_0 M_0^{-1/2}\right)
\eea
where we defined
\be
\xi_h = \prod_{j=h+1}^L \tau_j.
\label{eq:muh}
\ee
Thus we have proved the following result 
\begin{proposition}
Consider a deterministic scheme which satisfies \eqref{eq:det} in the velocity space for the solution of the homogeneous kinetic equation \eqref{eq:BH} with deterministic interaction operator $Q(f,f)$ and random initial data $f({z},{\w},0)=f_0({z},{\w})$. Assume that the initial data is sufficiently regular.

Then, the recursive MSCV estimate defined in \eqref{eq:lambdar} with satisfies the error bound 
\bea
\nonumber
&&\|\EE[f](\cdot,t^n)-{E}^{\hat\Lambda_h^*}_{L}[f^n_{\Delta v}]\|_{{\LHBi}} \\[-.2cm]
\label{eq:errREC2}
\\[-.2cm]
\nonumber
&& \hskip 4cm \leq C \left(\sum_{h=1}^L \xi_h \sigma_h M_h^{-1/2}+\xi_0 M_0^{-1/2}+\Delta v^q \right)
\eea
where $\xi_h$ are given by \eqref{eq:muh}, and
$C>0$ depends on the final time and on the initial data. 
\end{proposition}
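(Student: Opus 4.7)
The plan is to reduce the bound to a recursive application of the single control variate estimate \eqref{eq:errHMMC2}, combined with the deterministic accuracy assumption \eqref{eq:det}.

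First I would split the total error by the triangle inequality as
\[
\|\EE[f](\cdot,t^n)-{E}^{\hat\Lambda^*}_{L}[f^n_{\Delta v}]\|_{{\LHBi}}
\leq \|\EE[f](\cdot,t^n)-{E}^{\hat\Lambda^*}_{L}[f](\cdot,t^n)\|_{{\LHBi}}
+\|{E}^{\hat\Lambda^*}_{L}[f](\cdot,t^n)-{E}^{\hat\Lambda^*}_{L}[f^n_{\Delta v}]\|_{{\LHBi}}.
\]
The second contribution is a purely deterministic discretization error: since ${E}^{\hat\Lambda^*}_{L}[\,\cdot\,]$ is a linear combination of empirical averages of the approximate solutions $f_h^{k,n}_{\Delta v}$, applying \eqref{eq:det} to each sample and summing gives a $C\,\Delta v^q$ bound.

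Next I would treat the statistical part recursively. Viewing \eqref{eq:lambdar} as ``estimate $\EE[f]$ via the control variate $f_L$ with $M_L$ samples, where $\EE[f_L]$ is itself estimated by the $(L-1)$-level recursive estimator'', the single control variate error bound \eqref{eq:errHMMC2} applied at the top level produces a leading term $C_L \sigma_L M_L^{-1/2}$ together with a factor $\tau_L$ multiplying the error in estimating $\EE[f_{L-1}]$ by ${E}^{\hat\Lambda^*}_{L-1}[f_{L-1}]$. Unrolling the same argument level by level down to the base case, where $\EE[f_1]$ is approximated by the plain Monte Carlo estimator $E_{M_0}[f_1]$ with variance controlled by $M_0^{-1/2}$, I pick up at each level $h$ a contribution $\sigma_h M_h^{-1/2}$ weighted by the cumulative product $\prod_{j=h+1}^L \tau_j = \xi_h$ of the $\tau$-coefficients of the higher levels. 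Summing these contributions and adding the base term $\xi_0 M_0^{-1/2}$ yields precisely the statistical part of the bound in \eqref{eq:errREC2}.

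Throughout I would ignore, as stated before the proposition, the statistical error in the plug-in estimates of the quasi-optimal coefficients $\hat\lambda_h^*$; this is justified by the fact that $\hat\lambda_h^*$ differs from the truly optimal $\lambda_h^*$ by $O(\bar\mu_h)$ according to the theorem on \eqref{th:ll}, which contributes only higher-order corrections. The main technical obstacle is the control of the $\tau$-factors propagating through the recursion: at each level, the independence between the $M_h$-sample and $M_{h-1}$-sample empirical averages is needed to apply the single-CV bound cleanly, and the norms introduced in \eqref{eq:norm1} must be handled carefully so that the constants $C_h$ combine into a single $C>0$ depending only on the final time and the initial data. Absorbing these constants and collecting the discretization part then produces the claimed bound \eqref{eq:errREC2}.
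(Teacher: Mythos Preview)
Your proposal is correct and follows essentially the same route as the paper's own argument: a triangle-inequality split into a statistical and a deterministic discretization part, the latter bounded directly by \eqref{eq:det}, and the former unrolled recursively level by level using the single control variate bound \eqref{eq:errHMMC2}, picking up the cumulative weights $\xi_h=\prod_{j=h+1}^L\tau_j$ down to the plain Monte Carlo base estimate at level $0$. The paper likewise ignores the statistical error in estimating the quasi-optimal $\hat\lambda_h^*$, so your treatment of that point matches as well.
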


Finally, concerning the relations between the recursive MSCV estimator \eqref{eq:mscvgr} and the Multi-level Monte Carlo (MLMC) approach \cite{MSS, DPms}, we made the following remarks. 

\begin{remark}~
\begin{itemize}
\item
We can emphasize the analogies with MLMC by using as control variates a hierarchy of discretizations of the kinetic equation with random inputs. In the case of a cartesian grid this aims at constructing a velocity discretization with corresponding mesh width $\Delta {\w}_h$ that satisfy $\Delta {\w}_h = 2^{1-h}(\Delta {\w}_1)$, $h=1,\ldots, L$ where $\Delta {\w}_1$ is the mesh width for the coarsest resolution. To avoid unessential difficulties, we denote by $f_h(z,v,t)$, $h=1,\ldots, L$, the continuous representation, for example by polynomial interpolation, of the corresponding numerical solution at time $t$ obtained with the underlaying deterministic method using mesh width $\Delta \w_h$.

Under these assumptions, fixing $\lambda_h=1$, $h=1,\ldots,L$ in \eqref{eq:mscvgr}, we get the classical MLMC estimator \cite{Giles}
\be
E_L^{{\bf 1}}[f](v,t) = E_{M_0}[f_1] + \sum_{h=1}^{L} (E_{M_h}[f_{h+1}-f_{h}]),
\label{eq:MLMC}
\ee
where we used the notation ${\bf 1}=(1,\ldots,1)^T$. Note that, as a side result of our derivation, using the quasi-optimal values \eqref{eq:lambdasr} in \eqref{eq:mscvgr} (or the optimal values solution to \eqref{eq:sys2}) with the hierarchical grid constructed above we obtain a quasi-optimal (optimal) version of MLMC.

\item Similarly to MLMC methods, in the recursive MSCV estimator \eqref{eq:mscvgr} the largest number of samples $M_0$ is required on the less accurate model $f_1$, where the samples are cheaper, whereas only a small number $M_{L}$ of samples are needed on the full model. The two key differences between the recursive MSCV estimator \eqref{eq:mscvgr} and the MLMC approach \eqref{eq:MLMC} consist in the choice of low fidelity models as control variates instead of a hierarchy of discretizations and in using the quasi-optimal values \eqref{eq:lambdasr} (or the optimal values solution to \eqref{eq:sys2}) instead of fixing $\lambda_h=1$, $h=1,\ldots,L$.

\end{itemize}
\end{remark}

\subsection{Multiple multi-scale control variates estimators}
For non homogeneous problems, as in the scalar case summarized in Section \ref{sec:MSCVnh}, the main difference is that we cannot assume to know the expectation of the control variate or that it can be computed accurately at a negligible computational cost. Each control variate, in fact, acts at a certain scale and requires the numerical solution of a suitable time dependent model in the phase space.

Now, the MSCV estimator \eqref{eq:mscvg}, based on the multiple control variates $f_1(z,x,v,t)$, $\ldots$, $f_L(z,x,v,t)$ for the solution $f(z,x,v,t)$ to problem \eqref{eq:FP_general}, reads 
\be
E^{\Lambda}_{M,M_E}[f](x,v,t) = E_M[f](x,v,t)-\sum_{h=1}^L \lambda_h \left(E_{M}[f_h](x,v,t)-E_{M_{E}}[f_h](x,v,t)\right),
\label{eq:mscvgh}
\ee
where $M_E \gg M$ samples have been used to estimate the expectations of the control variates $\EE[{f}_{h}(z,v,t)]$.

As in the scalar case, minimization of the variance of \eqref{eq:mscvgh},
leads to the optimal values
\be
\tilde\Lambda^* = \frac{M_E}{M+M_E}\Lambda,
\label{eq:lambdamcv2}
\ee
with $\Lambda^*$ given by \eqref{eq:lambdamcv}. In the sequel we assume 
 $M_E \gg M$ so that $\frac{M_E}{M+M_E}\approx 1$.

The extension of algorithms \ref{alg:1} to the non homogeneous case is reported below. 

\begin{algorithm}[Multiple MSCV - non homogeneous case]~
\begin{enumerate}
\item {\bf Sampling}:
\begin{enumerate}
\item Sample $M_E$ i.i.d. initial data $f_0^k$, $k=1,\ldots,M_E$ from the random initial data $f_0$ and approximate these over the grid characterized by $\Delta x$ and $\Delta v$.
\item Sample $M \ll M_E$ i.i.d. initial data $f_0^k$, $k=1,\ldots,M$ from the random initial data $f_0$ and approximate these over the grid characterized by $\Delta x$ and $\Delta v$.
\end{enumerate}

\item {\bf Solving}: 
\begin{enumerate}
\item 
For each control variate and for each realization  $f_{\Delta v}^{k,0}$, $k=1,\ldots,M_E$, the resulting control variate model is solved numerically by a deterministic solver with mesh widths $\Delta x,\Delta v$. We denote the resulting deterministic solutions for $h=1,\ldots,L$ at time $t^n$ by $f_{h,\Delta x, \Delta v}^{k,n}$, $k=1,\ldots,M_E$ and estimate their expectations by
\[
{\bf f}_{h,\Delta x,\Delta {\w}}^{n}=\frac1{M_E}\sum_{k=1}^{M_E}{f}_{h,\Delta x,\Delta v}^{k,n}.
\] 
\item For each realization $f_{\Delta x, \Delta v}^{k,0}$, $k=1,\ldots,M$ 
the underlying kinetic equation (\ref{eq:FP_general}) is solved numerically by the corresponding deterministic solver with mesh widths $\Delta x,\Delta v$. We denote the solution at time $t^n$ by $f^{k,n}_{\Delta x,\Delta {\w}}$, $k=1,\ldots,M$. 
\end{enumerate}
\item {\bf Estimating}: 
\begin{enumerate}
\item
Estimate the optimal vector of values $\llambda^*$ solving
\be
C^n_{M_E} \llambda^n = b^n_M,
\label{eq:ill2}
\ee
where $(C^n_{M_E})_{ij} = \cov_{M_E}(f_{i,\Delta x, \Delta v}^{n},f_{j,\Delta x, \Delta v}^{n})$ and $(b^n_M)_i=\cov_M(f_{\Delta x, \Delta v}^{n},f_{i,\Delta x, \Delta v}^{n})$.
\item
Compute the expectation of the random solution with the control variate estimator
\be
E^{\llambda^*}_{M,M_E}[f^n_{\Delta x, \Delta v}] = \frac1{M} \sum_{k=1}^M f^{k,n}_{\Delta x,\Delta {\w}}-\sum_{h=1}^L \lambda^{*,n}_{h} \left(\frac1{M} \sum_{k=1}^M {f}_{h,\Delta x,\Delta {\w}}^{k,n}-{\bf f}_{h,\Delta x, \Delta v}^n\right).
\label{mcest2ih}
\ee
\end{enumerate}
\end{enumerate}
\label{alg:1b}
\end{algorithm}
In the space non homogeneous case, similarly to the case where the control variates expectations are known, we can apply the Gram--Schmidt orthogonalization procedure \eqref{eq:gs} and use the estimator 
\be
E^{\Gamma}_{M,M_E}[f](v,t) = E_M[f](x,v,t)-\sum_{h=1}^L \gamma_h \left(E_{M}[g_h](x,v,t)-E_{M_E}[g_h](x,v,t)\right),
\label{eq:mscvg2ih}
\ee
with the optimal vector of values $\Gamma^*$ defined by \eqref{eq:gammah}.
For the estimator \eqref{eq:mscvg2ih} we have the following generalization of the error estimate \eqref{eq:errHMMC2b}.
\begin{proposition}
Consider a deterministic scheme which satisfies \eqref{eq:det} for the solution of the kinetic equation of the form \eqref{eq:FP_general} with deterministic interaction operator $Q(f,f)$ and random initial data $f(z,x,{\w},0)=f_0(z,x,{\w})$. Assume that the initial data is sufficiently regular.

Then, the MSCV estimate defined in \eqref{eq:mscvg2ih} with the optimal values given by \eqref{eq:gammah} satisfies the error bound 

\bea
\nonumber
&&\|\EE[f](\cdot,t^n)-{E}^{\Gamma^*}_{M,M_E}[f^n_{\Delta x,\Delta v}]\|_{\LLBi} \\[-.2cm]
\label{eq:errHMMC2bb}
\\
\nonumber
&& \hskip 2cm
\leq  C\left\{\sigma_{f^{\Gamma^*}} M^{-1/2}+\tau_{f^{\Gamma^*}} M_{E}^{-1/2}+\Delta {x}^p+\Delta v^q\right\} 
\eea
with $\sigma^2_{f^{\Gamma^*}}=\left\|\left(1-\sum_{h=1}^L \rho^2_{f,g_h}\right)\var(f)\right\|_{\LL}$, $\tau^2_{f^{\Gamma^*}}=\left\|\sum_{h=1}^L \rho^2_{f,g_h}\var(f)\right\|_{\LL}$,
and $C>0$ depends on the final time and on the initial data. 
\end{proposition}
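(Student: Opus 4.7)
The plan is to mirror the argument behind the single-control-variate estimate \eqref{eq:errHMMC2}, exploiting in addition the pairwise orthogonality of the $g_h$'s to decouple the estimator's variance into $L$ independent rank-one contributions. The starting point is a triangle inequality that splits the total error into a purely statistical piece and a purely deterministic piece,
\[
\|\EE[f](\cdot,t^n) - E^{\Gamma^*}_{M,M_E}[f^n_{\Delta x, \Delta v}]\|_{\LLBi} \leq A_{\mathrm{stat}} + A_{\mathrm{det}},
\]
with $A_{\mathrm{stat}}=\|\EE[f](\cdot,t^n) - E^{\Gamma^*}_{M,M_E}[f](\cdot,t^n)\|_{\LLBi}$ and $A_{\mathrm{det}}=\|E^{\Gamma^*}_{M,M_E}[f](\cdot,t^n) - E^{\Gamma^*}_{M,M_E}[f^n_{\Delta x, \Delta v}]\|_{\LLBi}$. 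The deterministic term is bounded as in the proofs of \eqref{eq:errHMMC2} and \eqref{eq:errHMMC2b}: since $E^{\Gamma^*}_{M,M_E}[\cdot]$ acts linearly on $f$ and on each $g_h$, and the $g_h$'s are (realization-independent) linear combinations of the control variates $f_1,\ldots,f_L$ via the Gram--Schmidt recursion \eqref{eq:gs}, assumption \eqref{eq:det} applied to the full model and to each $f_h$ yields $A_{\mathrm{det}} \leq C(\Delta x^p + \Delta v^q)$.

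For the statistical term I would rewrite the estimator as
\[
E^{\Gamma^*}_{M,M_E}[f] = E_M\!\left[f - \sum_{h=1}^L \gamma^*_h g_h\right] + \sum_{h=1}^L \gamma^*_h\, E_{M_E}[g_h],
\]
from which unbiasedness $\EE[E^{\Gamma^*}_{M,M_E}[f]]=\EE[f]$ is immediate. Hence the mean-square error at each point $(x,v)$ equals the variance of the estimator. Since the $M$-sample and the $M_E$-sample estimators use independent draws, the standard Monte Carlo identity $\var(E_N[\cdot]) = N^{-1}\var(\cdot)$ gives
\[
\var\bigl(E^{\Gamma^*}_{M,M_E}[f]\bigr) = \frac{1}{M}\,\var\bigl(f^{\Gamma^*}\bigr) + \frac{1}{M_E}\,\var\!\left(\sum_{h=1}^L \gamma^*_h g_h\right).
\]

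To identify the two statistical contributions it then suffices to evaluate the two variances on the right-hand side. The first equals $(1-\sum_h \rho^2_{f,g_h})\var(f)$ by \eqref{eq:var3}. For the second, the orthogonality $\cov(g_h,g_k)=0$ for $h\neq k$ built into \eqref{eq:gs} together with the definition $\gamma^*_h = \cov(f,g_h)/\var(g_h)$ gives
\[
\var\!\left(\sum_{h=1}^L \gamma^*_h g_h\right) = \sum_{h=1}^L (\gamma^*_h)^2 \var(g_h) = \sum_{h=1}^L \rho^2_{f,g_h}\,\var(f),
\]
so the pointwise variance is exactly $M^{-1}(1-\sum_h\rho^2_{f,g_h})\var(f) + M_E^{-1}\sum_h\rho^2_{f,g_h}\var(f)$. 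Taking the square root, using $\sqrt{a+b}\le\sqrt{a}+\sqrt{b}$, and applying the outer weighted $L^1_2(\DH\times\RR^{d_v})$ norm produces the two expected terms $\sigma_{f^{\Gamma^*}} M^{-1/2}$ and $\tau_{f^{\Gamma^*}} M_E^{-1/2}$.

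The only real obstacle is the careful use of the mixed norm $\|\cdot\|_{\LLBi}=\|\EE[\cdot^2]^{1/2}\|_{\LL}$, which is not a genuine Bochner $L^2$ norm in $z$: the expectation and the weighted $v$-integral do not commute, so all the variance manipulations must be carried out pointwise in $(x,v,t)$ before the outer weighted $L^1$ norm is taken. This is precisely the convention already adopted in \cite{DPms,MSS} and in the scalar version of the proof, and the multiple-control-variate generalization presents no additional difficulty beyond the bookkeeping for the $L$ orthogonalized control variates and the routine neglect of the statistical error arising from the plug-in estimation of the vector $\Gamma^*$.
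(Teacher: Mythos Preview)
Your argument is correct, and it differs from the paper's in how the error is split. You insert the \emph{continuous} estimator $E^{\Gamma^*}_{M,M_E}[f]$ as the pivot, separating a purely statistical term (handled by a single pointwise variance computation exploiting independence of the $M$- and $M_E$-samples and orthogonality of the $g_h$) from a purely deterministic one. The paper instead inserts the \emph{ideal} estimator $E^{\Gamma^*}_{M}[f^{n}_{\Delta x,\Delta v}]$ with exact control-variate expectations: the first piece $I_1=\|\EE[f]-E^{\Gamma^*}_{M}[f^{n}_{\Delta x,\Delta v}]\|$ is bounded by directly invoking the homogeneous estimate \eqref{eq:errHMMC2b}, and the second piece $I_2=\|E^{\Gamma^*}_{M}[f^{n}_{\Delta x,\Delta v}]-E^{\Gamma^*}_{M,M_E}[f^{n}_{\Delta x,\Delta v}]\|=\|\sum_h\gamma_h^*(\EE[g_h]-E_{M_E}[g_h])\|$ isolates the $M_E^{-1/2}$ contribution. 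Your route is slightly more self-contained (you compute the full estimator variance in one shot rather than reusing the earlier proposition as a black box), while the paper's decomposition is more modular and makes the provenance of the $\tau_{f^{\Gamma^*}}$ term through \eqref{eq:gammah} more explicit. Both lead to the same bound with the same constants.
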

\begin{proof}
We have
\begin{eqnarray*}
\nonumber
\|\EE[f](\cdot,t^n)-{E}^{\Gamma^*}_{M,M_E}[f^{n}_{\Delta x,\Delta {\w}}]\|_{\LLBi} &&\\
&& \hskip -2.5cm\leq\|\EE[f](\cdot,t^n)-{E}^{\Gamma^*}_{M}[f^{n}_{\Delta x,\Delta {\w}}]\|_{\LLBi} \\
\nonumber
&&\hskip -2.5cm+\|{E}^{\Gamma^*}_{M}[f^{n}_{\Delta x,\Delta {\w}}]-{E}^{\Gamma^*}_{M,M_E}[f^{n}_{\Delta x,\Delta {\w}}]\|_{\LLBi}\\
&&\hskip -2.5cm = I_1+I_2. 
\end{eqnarray*}
The first term $I_1$ can be bounded similarly to \eqref{eq:errHMMC2b} to get 
\begin{eqnarray*}
\nonumber
&&\|\EE[f](\cdot,t^n)-{E}^{\Gamma^*}_{M}[f^{n}_{\Delta x,\Delta {\w}}]\|_{\LLBi} \\
&&\hskip 4cm \leq  C_1\left\{\sigma_{f^{\Gamma^*}} M^{-1/2}+\Delta {x}^p+\Delta v^q\right\}. 
\end{eqnarray*}

Using the fact that, ignoring the statistical error in estimating $\gamma_h^{*}$, from \eqref{eq:mscvg2} and \eqref{eq:mscvg2ih} we have
\[
{E}^{\Gamma^*}_{M}[f^{n}_{\Delta x,\Delta {\w}}]-{E}^{\Gamma^*}_{M,M_E}[f^{n}_{\Delta x,\Delta {\w}}] = \sum_{h=1}^L \gamma_h^{*,n} \left(\EE[{g}^n_{h,\Delta x,\Delta v}]-E_{M_E}[g^n_{h,\Delta x,\Delta v}]\right).
\]
From \eqref{eq:gammah} the second term $I_2$ can be bounded by
\begin{eqnarray*}
\nonumber
&&\|{E}^{\Gamma^*}_{M}[f^{n}_{\Delta x,\Delta {\w}}]-{E}^{\Gamma^*}_{M,M_E}[f^{n}_{\Delta x,\Delta {\w}}]\|_{\LLBi}\\
&& \hskip 4cm \leq C_2\left\{\tau_{f^{\Gamma^*}} M_E^{-1/2}+\Delta {x}^p+\Delta {\w}^q\right\}. 
\end{eqnarray*}
\end{proof}

Similarly, the recursive MSCV estimator \eqref{eq:mscvgr}, based on a hierarchy of multiple control variates $f_1(z,x,v,t)$, $\ldots$, $f_L(z,x,v,t)$ with increasing level of fidelity for the solution $f(z,x,v,t)$ to problem \eqref{eq:FP_general}, is
\be
E^{\Lambda}_L[f](x,v,t) = E_{M_L}[f](x,v,t)-\sum_{h=1}^L \lambda_h \left(E_{M_h}[f_h](x,v,t)-E_{M_{h-1}}[f_h](x,v,t)\right),
\label{eq:mscvgrh}
\ee
with $M_{h-1} \gg M_h$ and where now the optimal values of $\Lambda^* = (\lambda_1^*,\ldots,\lambda_L^*)^T$ are obtained from the quasi-optimal solution \eqref{eq:lambdasr} using \eqref{eq:lambdah} or by the correction introduced by the solution of the tridiagonal system \eqref{eq:sys2} if relevant.

In this case, the extension of algorithms \ref{alg:2} and estimate \eqref{eq:errREC2} to the non homogeneous case follows straightforwardly simply replacing $f^{n,k}_{\Delta v}$ and $f^{n,k}_{h,\Delta v}$ with $f^{n,k}_{\Delta x,\Delta v}$ and $f^{n,k}_{h,\Delta x,\Delta v}$, and is omitted for brevity.

Finally, due to its importance in practical applications, we describe the details of the hierarchical method in the case $L=2$.

\subsubsection{Two multi-scale hierarchical control variates}\label{sec:twoh}

Let us focus on the case of a recursive multiscale estimator with two control variates $L=2$. To this aim we consider $f_1(z,x,v,t)$ as the equilibrium state $f_F^\infty(z,x,v,t)$ associated to the system of Euler equations 
\be
\partial_t U_F +\partial_x {\mathcal F}(U_F) = 0, 
\label{eq:Euler2}
\ee
with $U_F=(\rho_F,u_F,T_F)^T$, and corresponding to the limit case $\varepsilon\to 0$ in \eqref{eq:FP_general}. As a second control variate we consider $f_2(z,x,v,t)$ as  the solution of the BGK model 
\be
\frac{\partial}{\partial t}{f_2} + {\w} \cdot \nabla_x {f_2} = \frac{\nu}{\varepsilon} (f_2^\infty-{f_2}).
\label{eq:BGK2}
\ee
Both models are solved for the same initial data $f_0(z,x,v)$. Now, the Euler equations are used as control variate to improve the computation of the expectation in the BGK model, that in turn is used as control variate to improve the computation of the expectation in the full Boltzmann model. 

The recursive estimator now reads
\begin{eqnarray}
E_2^{{\hat\lambda_1,\hat\lambda_2}}[f] = E_{M_2}[f] -{\hat \lambda}_2\left(E_{M_2}[f_2]-E_{M_{1}}[f_2]
+{\hat \lambda}_{1}\left(E_{M_{1}}[f_{1}]-E_{M_{0}}[f_{1}]\right)\right),
\end{eqnarray}
where $M_0 \gg M_1 \gg M_2$. If we define $\lambda_2=\hat\lambda_2$ and $\lambda_1=\hat\lambda_1\hat\lambda_2$  their optimal values are computed as solutions of system \eqref{eq:sys2} for $L=2$
\bea
\nonumber
\lambda_1 \var(f_1)  -
\lambda_{2}(1-\mu_1) \cov(f_{2},f_{1}) &=&0\\
\nonumber
\lambda_2 \var(f_2) - \lambda_{1} \mu_2 \cov(f_{2},f_{1}) &=&
(1-\mu_2) \cov(f,f_{2}).
\eea  
with $\mu_h = M_h/(M_{h-1}+M_h)$, which gives
\bea
\lambda_1^* &=& \frac{(1-\mu_1)(1-\mu_2)\cov(f_{2},f_{1}) \cov(f,f_{2})}{\displaystyle\var(f_1)\var(f_2)-(1-\mu_1)\mu_2 \cov(f_{2},f_{1})^2}\\
\lambda_2^* &=& \frac{ (1-\mu_2) \var(f_{1}) \cov(f,f_{2})}{\var(f_1)\var(f_2)-(1-\mu_1)\mu_2\cov(f_{2},f_{1})^2}.  
\eea
The quasi-optimal values are obtained assuming $\mu_1,\mu_2\approx 0$ and are characterized by 
\be
\hat\lambda_1^* = \frac{\cov(f_2,f_1)}{\var(f_1)},\qquad \hat\lambda_2^* = \frac{\cov(f,f_2)}{\var(f_2)}. 
\ee
In the fluid limit $\varepsilon\to 0$ we have $f,f_1,f_2\to f^\infty$ so that
\[
\lim_{\varepsilon\to 0} \lambda_1^* = \frac{(1-\mu_1)(1-\mu_2)}{1-(1-\mu_1)\mu_2}=\frac{{M_{0}}}{M_0+M_{1}+M_{2}},\quad \lim_{\varepsilon\to 0} \lambda_2^* = \frac{(1-\mu_2)}{1-(1-\mu_1)\mu_2}= \frac{M_0+M_1}{M_0+M_{1}+M_{2}},
\]
then
\[
\lim_{\varepsilon\to 0} E_2^{{\lambda_1^*,\lambda_2^*}}[f] = \frac{M_0 E_{M_0}[f^\infty]+M_1 E_{M_1}[f^\infty]+M_2E_{M_2}[f^\infty]}{M_0+M_{1}+M_{2}}.
\]
On the contrary, the quasi-optimal values are such that
\[
\lim_{\varepsilon\to 0} \hat\lambda_1^* = 1,\qquad \lim_{\varepsilon\to 0} \hat\lambda_2^* = 1
\]
and therefore
\[
\lim_{\varepsilon\to 0} E_2^{{\hat\lambda_1^*,\hat\lambda_2^*}}[f] = E_{M_0}[f^\infty]
\]
which corresponds to the equilibrium solution over the finest grid of samples.

\section{Numerical examples}
\label{sec:4}
In this Section, we discuss several numerical examples with the aim of illustrating
the characteristics of the multiple control variate strategies described in the previous Sections. We focus on
the two control variates approach \ref{sec:twoc} and on the multi-scale hierarchical multiple control variates one in the case where the hierarchy consists of two models with an increasing level of fidelity as described in Section \ref{sec:twoh}. We use shorthands MC, MSCV, MSCV2 and MSCVH2 to denote, respectively, the standard Monte Carlo, the single multi-scale control variate, the multiple multi-scale control variates and the hierarchical multiple multi-scale control variates.
  
The first test problem consists of a space homogeneous case with uncertain initial data, the second test problem is a Riemann problem with uncertainties in the initial state while the third test problem has randomness in the boundary condition. These tests are analogous to those considered in \cite{DPms} using a single control variate and our precise aim is to show that with a multiple control variates approach it is possible to achieve better results in terms of the ratio between computational cost and accuracy. 
In all our numerical tests the velocity space is two dimensional $d_v=2$, the velocity domain is truncated to $[-v_{\min},v_{\max}]^2$ and the collision integral is solved by the fast spectral method \cite{MP,DP15}. In the sequel, all the statistical errors have to be intended in the sense of the average where $M_a=10$ values have been used to compute the mean.

\subsection{Space homogeneous Boltzmann equation} 
We consider the space homogeneous Boltzmann equation (\ref{eq:BH}). We compare the case of the single control variate based on the BGK model \ref{sec:single} with the case of the two control variates approach of Section \ref{sec:twoc}. The number of samples used to compute the expected solution for the Boltzmann equation is $M=100$. Note that, since the control variates do not contribute to the overall computational cost their expected values are computed with high accuracy using orthogonal polynomials.
\subsubsection{Test 1. Uncertain initial data}
The initial condition is a two bumps problem with uncertainty
\be
f_0(z,v)=\frac{\rho_0}{2\pi} \left(\exp\left(-\frac{|v-(2+sz)|^2}{\sigma}\right)+\exp\left({-\frac{|v+(1+sz)|^2}{\sigma}}\right)\right)
\ee  
with $s=0.2$, $\rho_0=0.125$, $\sigma=0.5$ and $z$ uniform in $[0,1]$. The velocity space is discretized with $N_v=64^2$ points.  We choose $v_{\min}=v_{\max}=16$. The time integration has been performed with a $4$-th order Runge-Kutta method using $\Delta t=0.05$ and a final time $T_f=10$.

In Figure \ref{Figure1}, we report the $L_2$ error with respect to the random variable in the computation of the expected value for the distribution function $\EE[f](v,t)$ for the various methods.  Clearly, all MSCV methods provide a gain in accuracy of several orders of magnitudes with respect to standard Monte Carlo. In particular, we observe that MSCV2 method based on two control variates permits to gain one order of accuracy with respect to the standard MSCV approach.

\begin{figure}
	\begin{center}
		\includegraphics[width=.6\textwidth]{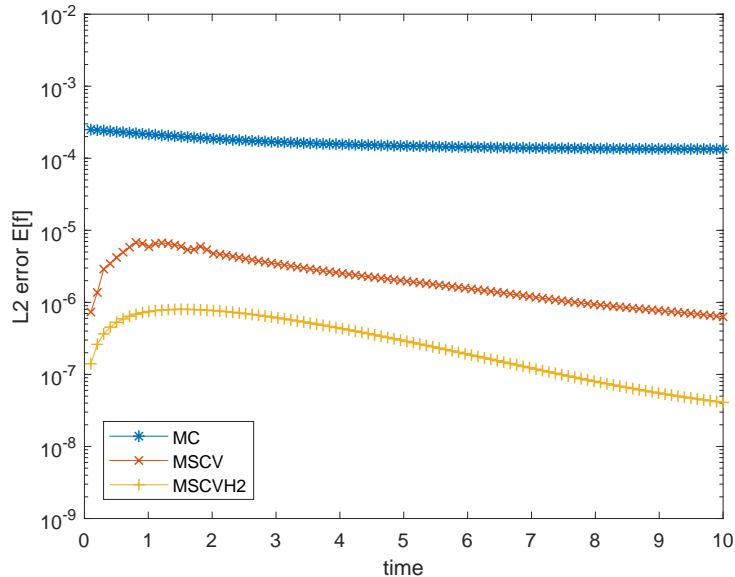}
		\caption{Test 1. $L_2$ norm of the error in time for the expectation of the distribution function for the MC method, the MSCV method based on the BGK solution and the MSCV2 method based on the two control variates $f_0$ and $f^\infty$.}
		\label{Figure1}
	\end{center}
\end{figure}

In Figure \ref{Figure4} we report the shape of the optimization coefficients $\lambda_1^*(v,t)$ and $\lambda_2^*(v,t)$ at the final time. It is possible to observe that $\lambda_1^*(v,t)$ is approaching zero and $\lambda_2^*(v,t)$ is approaching one, except in the regions where the solution still differs from its equilibrium value.

\begin{figure}
	\begin{center}
		\includegraphics[width=.45\textwidth]{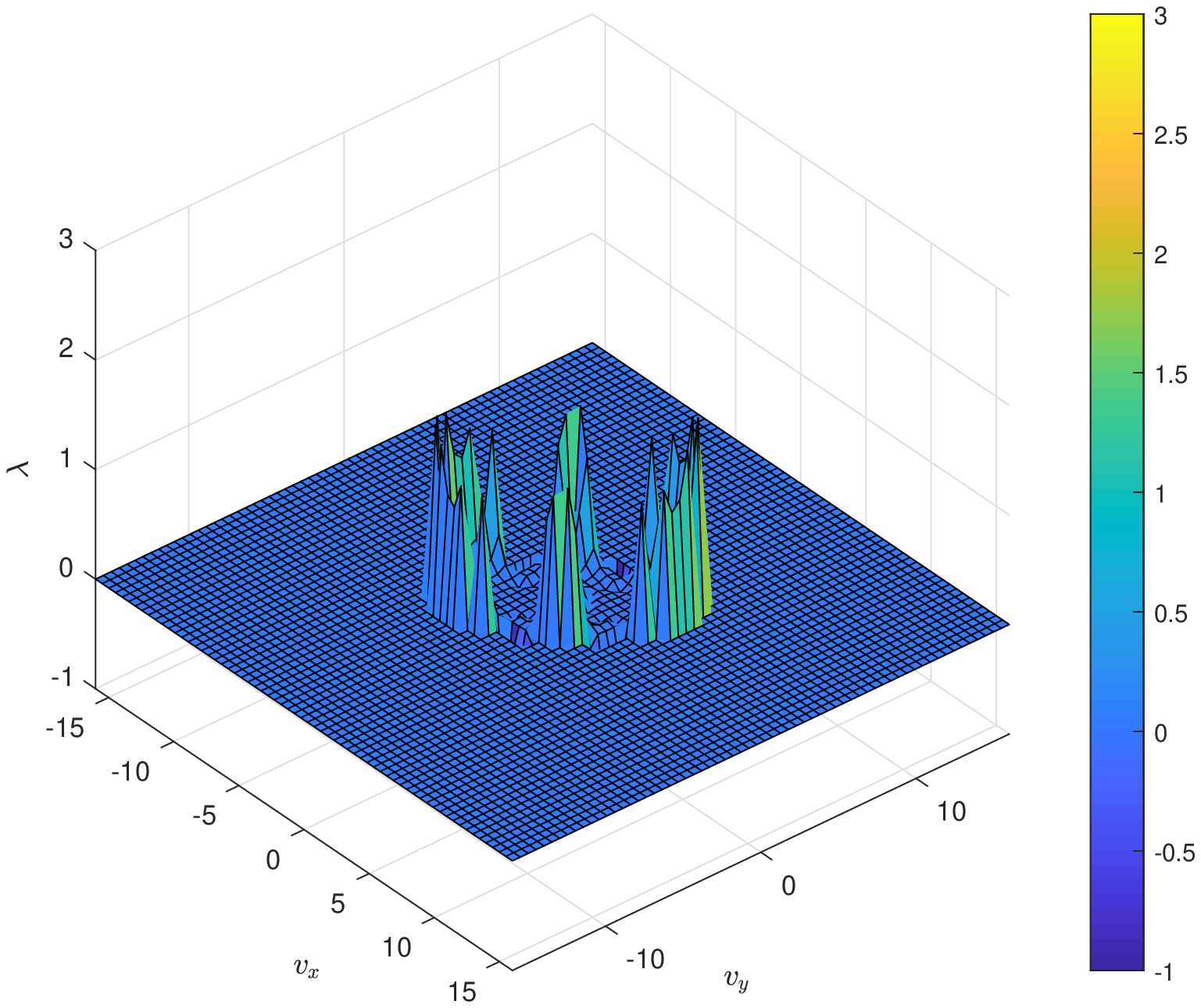}
		\includegraphics[width=.45\textwidth]{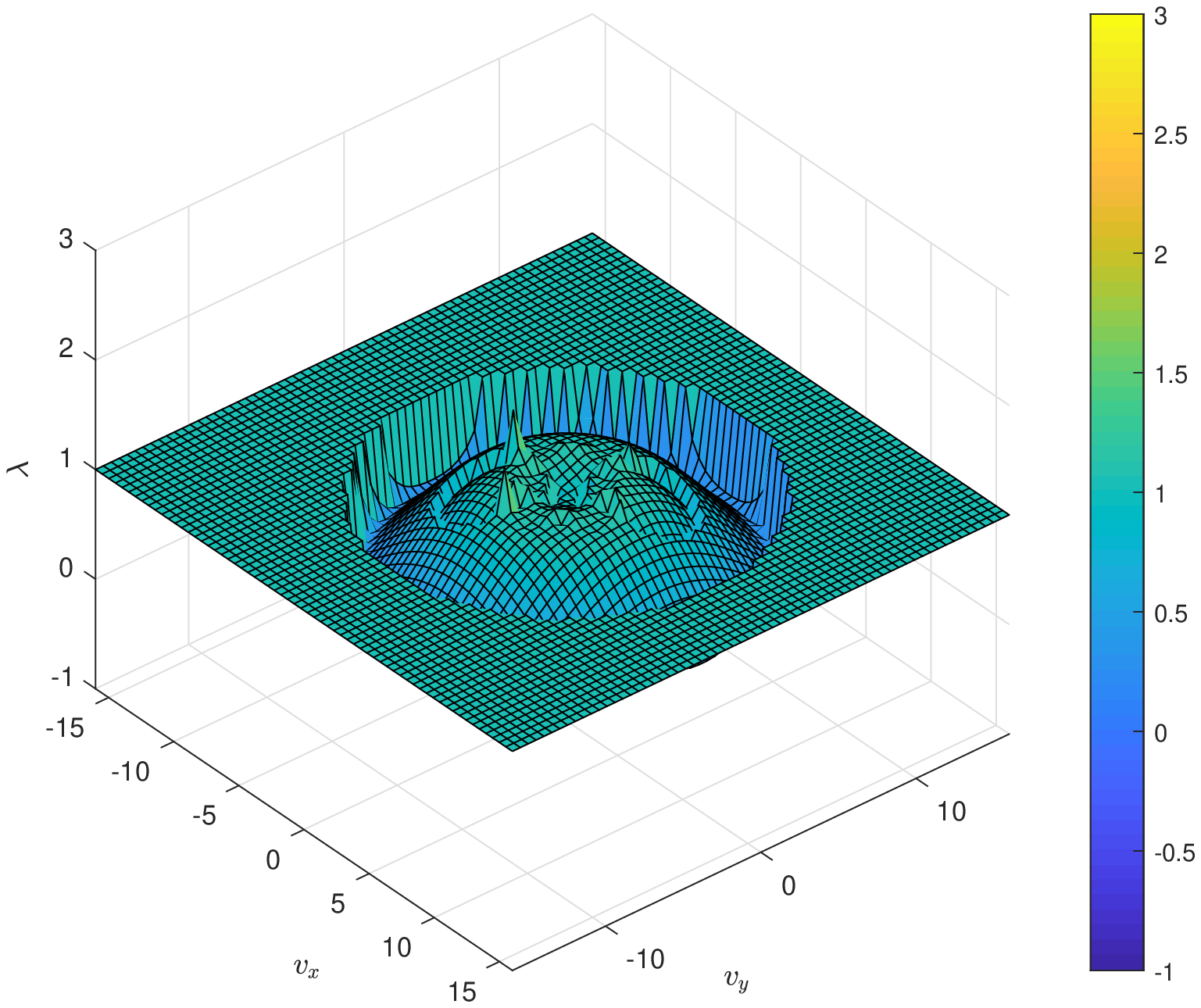}	
		\caption{Test 1. Optimal coefficients $\lambda_1^*(v,t)$ (left) and $\lambda_2^*(v,t)$ (right) at the final time $T_f=10$.}
		\label{Figure4}
	\end{center}
\end{figure}

\subsection{Space non homogeneous Boltzmann equation}
Next, we consider the space non homogeneous Boltzmann equation \eqref{eq:FP_general}. Concerning the space-time discretization, we make use of a fifth order WENO method \cite{JS} for the spatial discretization and a second order explicit Runge-Kutta method for the time discretization. The time step is the same for all methods and is taken as $\Delta t=\min\{\Delta x/(2 v_{\max}), \varepsilon\}$ with $\varepsilon$ the Knudsen number. Let observe that other time integrators can be used to solve the problem related to the stiffness of the collision operator which avoid the strict CFL condition coming from the solution of the relaxation phase (see \cite{DP15} for example). However, since this is not the core of the problem treated here, we will not discuss this issue in the rest of the paper.  

In all test cases we focus on the use of the BGK model and the Euler equations as control variates.
Since the ratio between accuracy and computational cost is of primary importance, we report in the following some estimates about the computational cost of the different models employed in the simulations.

The cost involved in the solution of the Boltzmann equation can be estimated by 
\[ 
\C(f)=C N_a^{d_v-1} N_v^{d_v}\log_2(N_v^{d_v})N_x^{d_x}
\]
the cost of the solution of the BGK equation by
\[ 
\C(\tilde f)= C_1 N_v^{d_v}N_x^{d_x}
\]
while the cost of the solution of the compressible Euler system by 
\[ 
\C(f_F^\infty) = C_2 N_x^{d_x}, 
\]
where $C, \ C_1$ and $C_2$ are suitable constants, $N_a$ is the number of angular directions in the velocity space used in the spectral discretization of the Boltzmann operator, $N_v$ the number of grid points in velocity space, $N_x$ the number of grid points in physical space and $d_v$ and $d_x$ respectively the dimensions in velocity and space. We use  $N_x=100$, $N_v=32$, $d_x=1$, $N_a=8$ and $d_v=2$ while a rough estimation of the ratio between the coefficients $C$, $C_1$ and $C_2$ gives ${C}/{C_1} \approx 1.25$ and ${C}/{C_2} \approx 1$. Concerning the number of samples of the random variable, we employ $M=10$ points for the Boltzmann model while the numbers $M_{E_1}\gg M$ and $M_{E_2}\gg M$ of samples used for the control variates models are discussed in the sequel.

The multiple control variates strategy is applied here in two different ways:
\begin{itemize}
\item The hierarchical method described in Section \ref{sec:twoh} based on the BGK model and the Euler system.
\item The two control variates method of Section \ref{sec:mcv} where two different BGK models are used. The first one uses $\nu_1=\rho$ while the second one $\nu_2=0.125\rho$ as relaxation frequencies. This choice is due to the fact that it is known that the BGK model tends to over-relax to the equilibrium state compared to the standard Boltzmann operator.
\end{itemize}
The standard MSCV method is computed using the BGK model as control variate.

\begin{figure}
	\begin{center}
		\includegraphics[width=.43\textwidth]{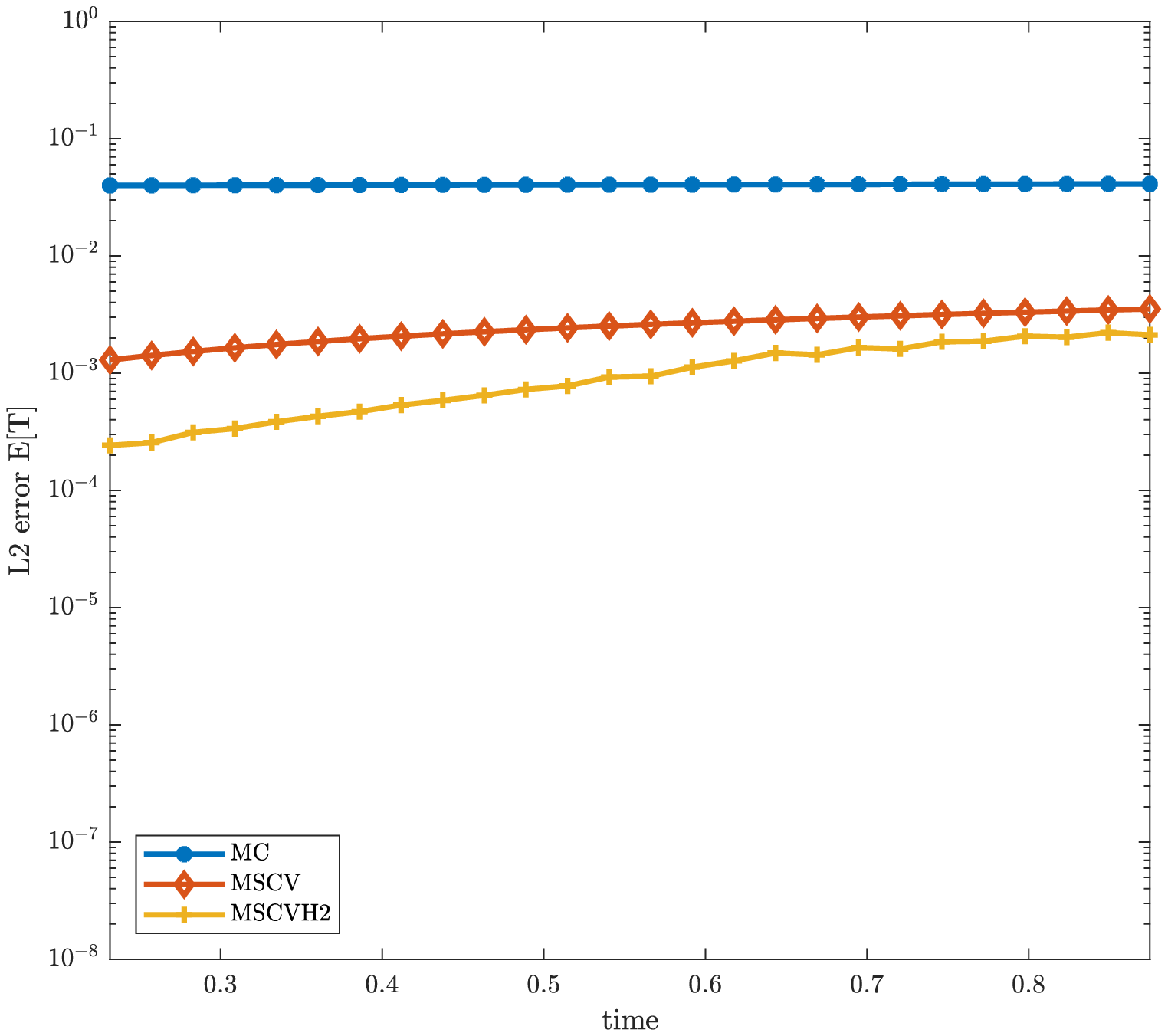}
		\includegraphics[width=.43\textwidth]{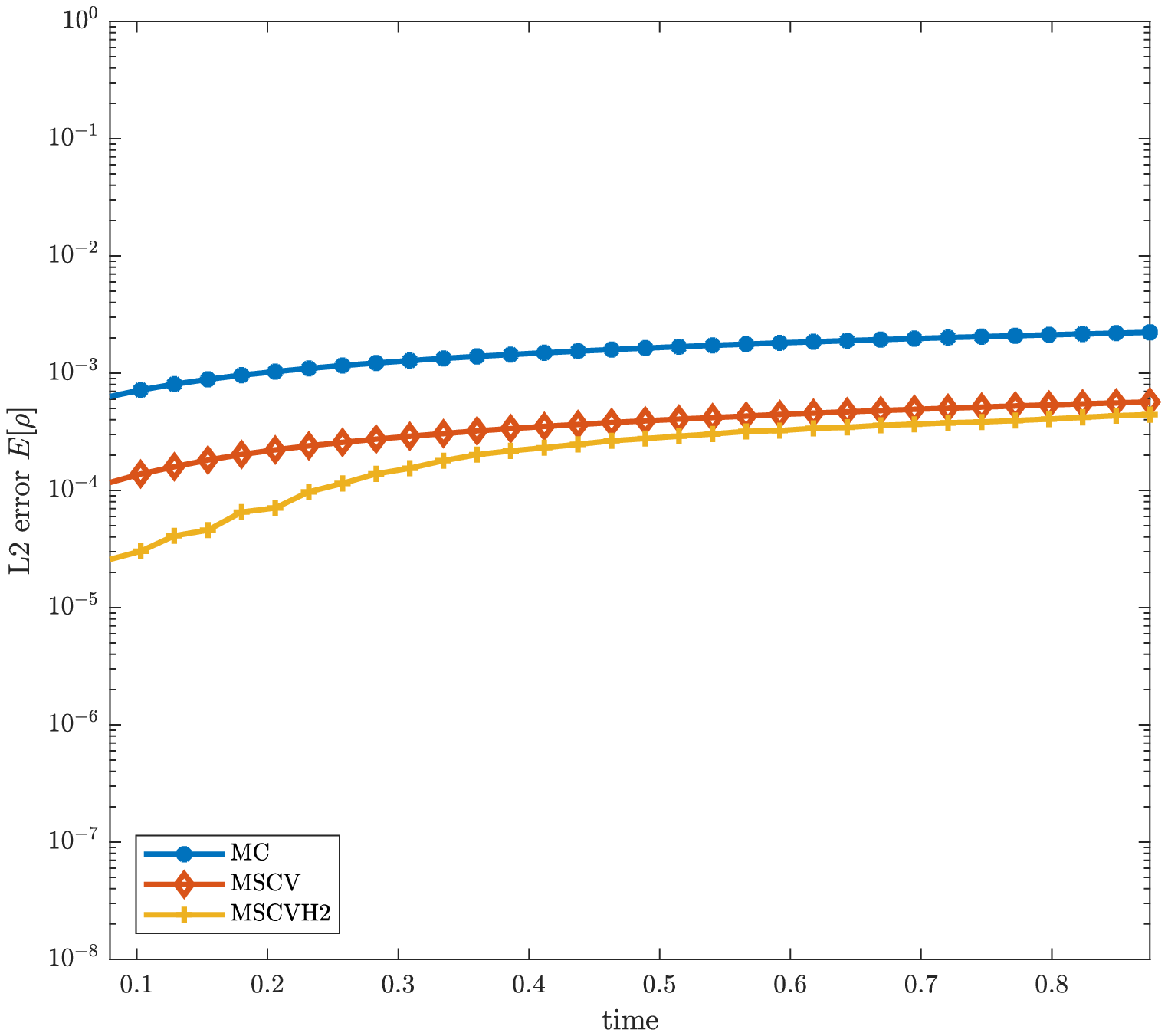}\\
		\includegraphics[width=.43\textwidth]{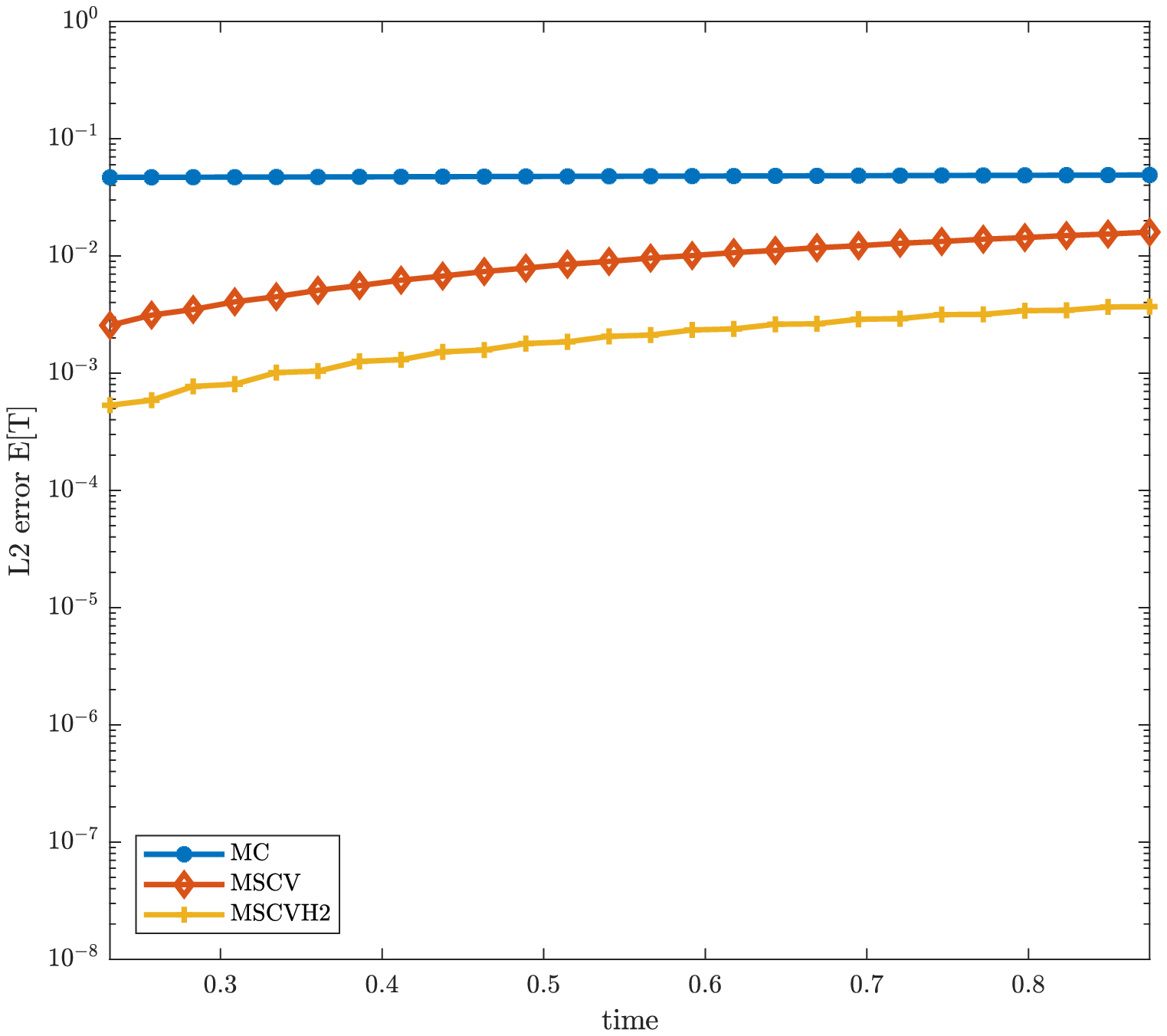}
		\includegraphics[width=.43\textwidth]{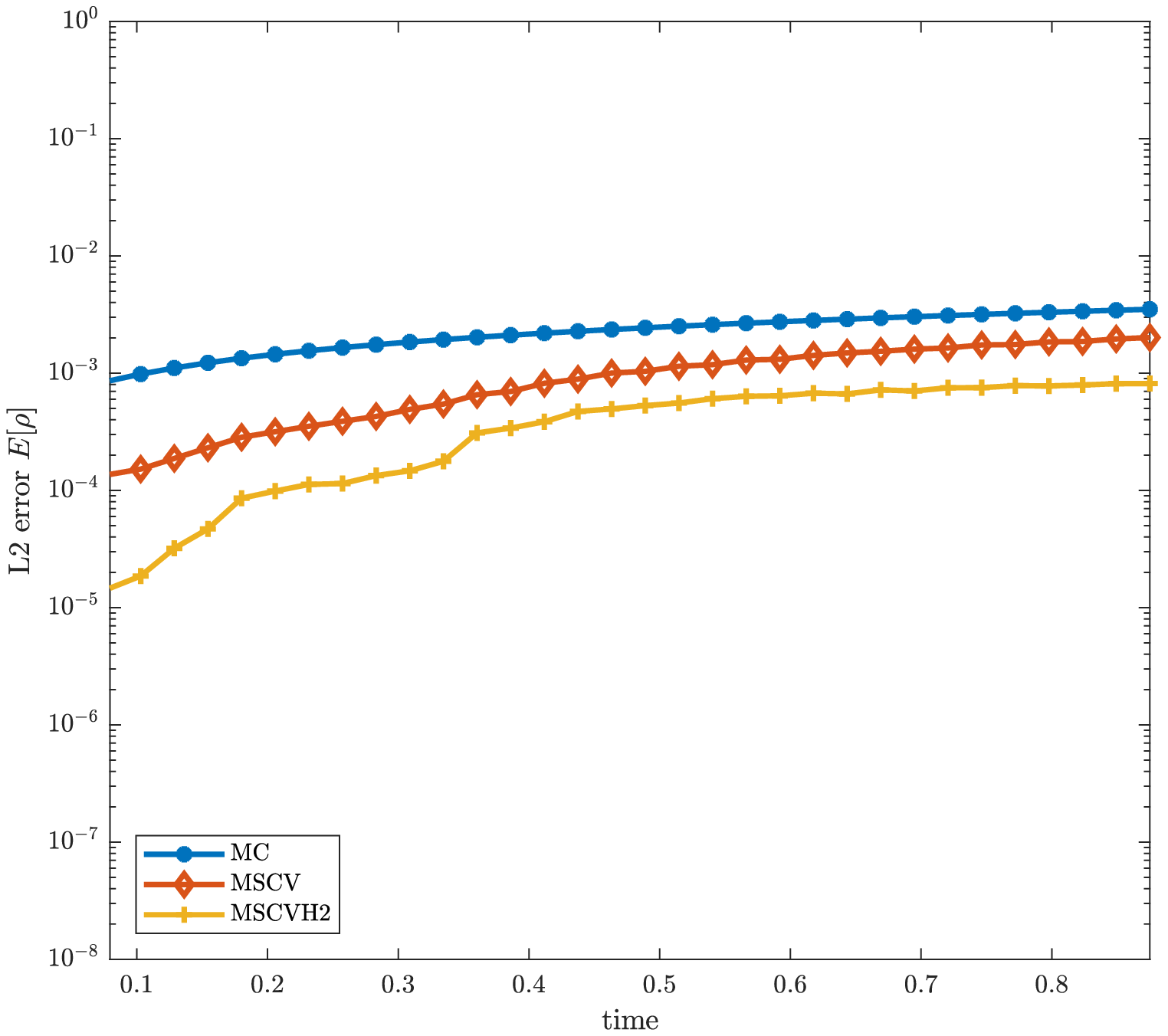}\\
		\includegraphics[width=.43\textwidth]{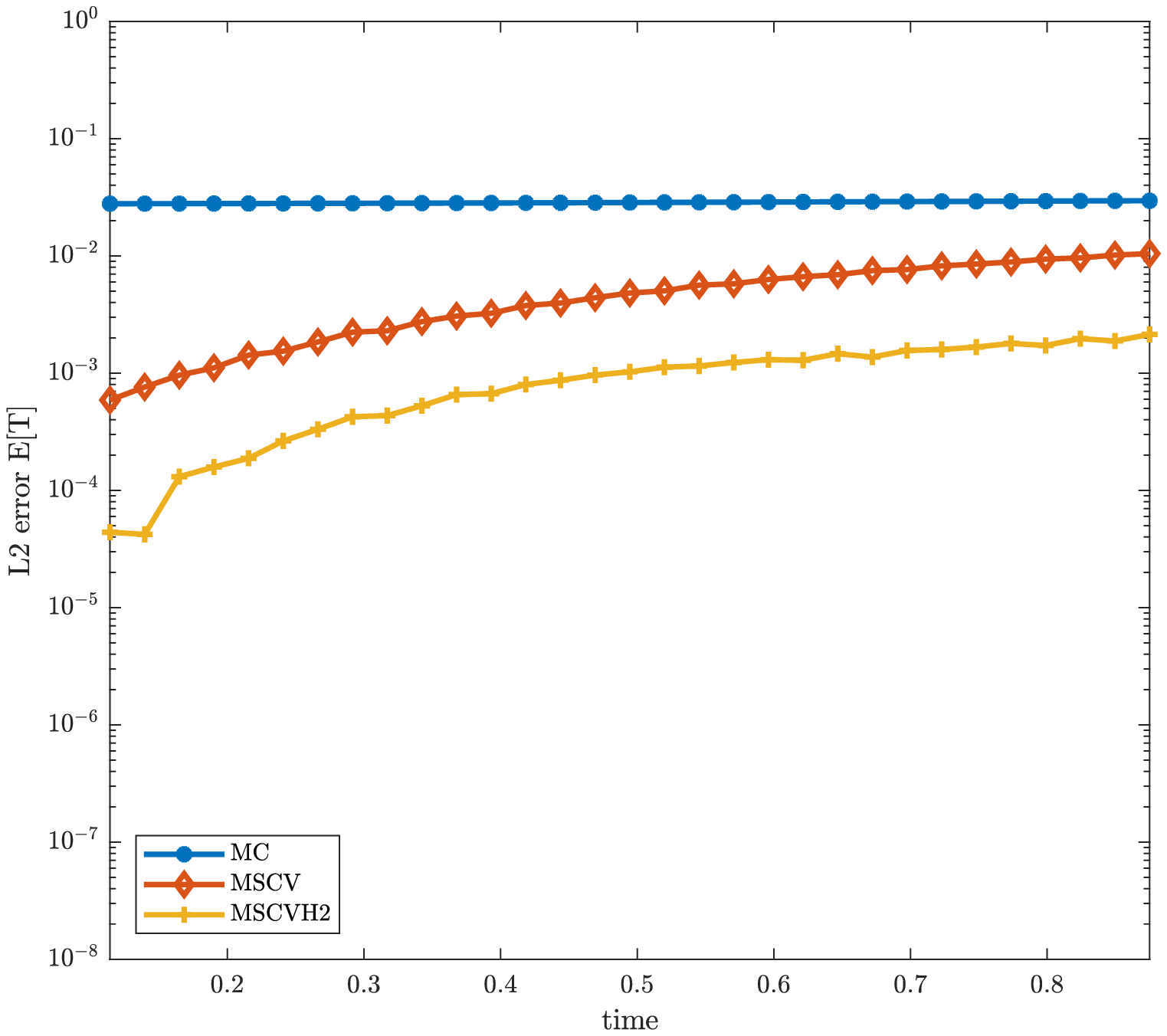}
		\includegraphics[width=.43\textwidth]{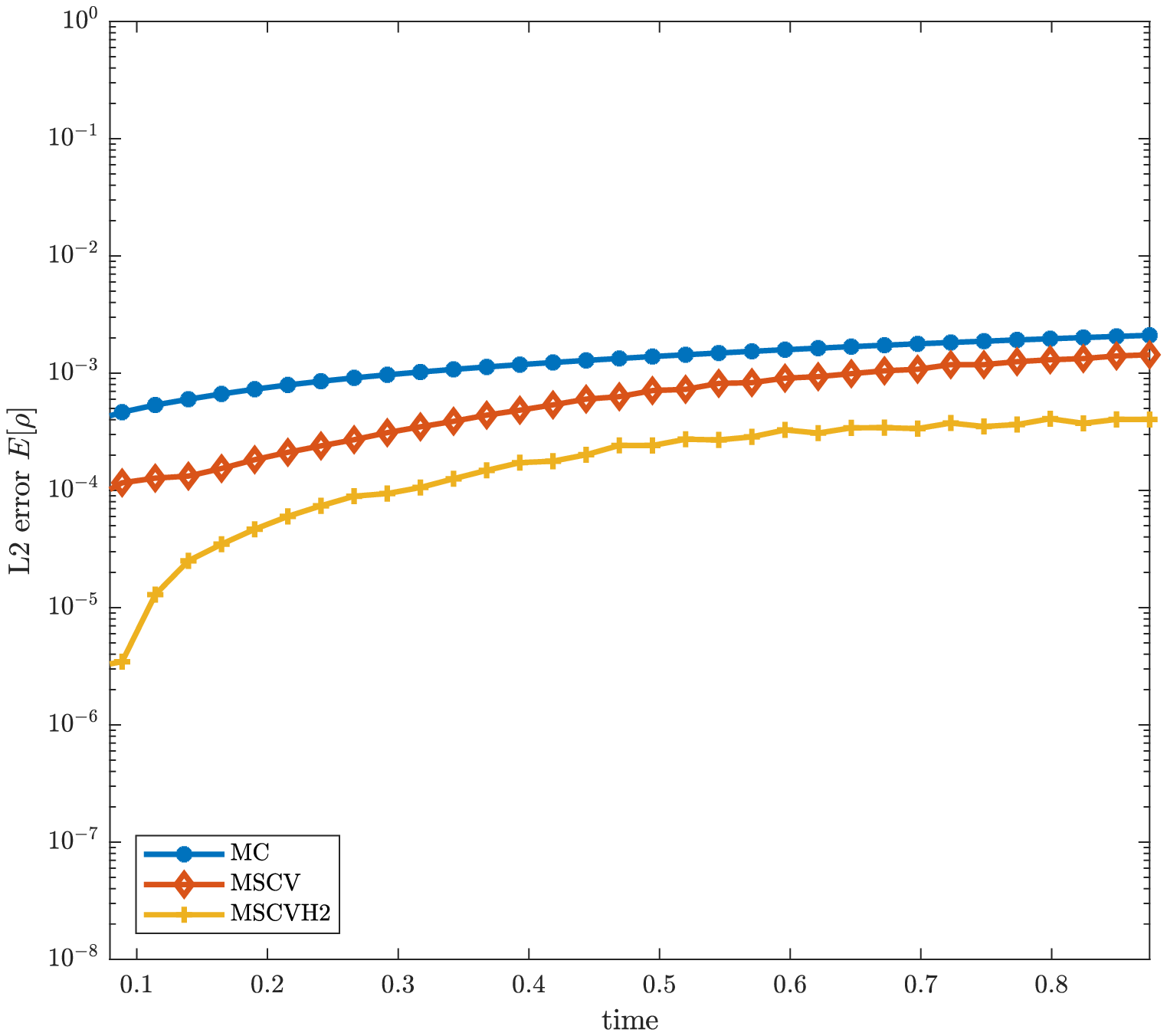}
		\caption{Test 2. Sod test with uncertainty in the initial data. $L_2$ norm of the error for the standard MC method, the MSCV method and the hierarchical multiple control variate MSCVH2 method for the expectation of the temperature (left) and for the density (right). The number of samples used for Boltzmann equation is $M=10$, for the BGK model is $M_{E_1}=100$ while for the compressible Euler system is $M_{E_2}=10^5$. Top: $\varepsilon=10^{-2}$. Middle: $\varepsilon=10^{-3}$. Bottom: $\varepsilon=2 \times \ 10^{-4}$. }\label{Figure6}
	\end{center}
\end{figure}

\subsubsection{Test 2. Sod test with uncertain initial data}
The initial conditions are
\bea
&\rho_0(x)=1, \ \ T_0(z,x)=1+sz \qquad &\textnormal{if} \ \ 0<x<L/2 \\
&\rho_0(x)=0.125,\ T_0(z,x)=0.8+sz  \qquad &\textnormal{if} \ \ L/2<x<1
\eea
with $s=0.25$, $z$ uniform in $[0,1]$ and equilibrium initial distribution
\[
f_0(z,x,\w)=\frac{\rho_0(x)}{2\pi } \exp\left({-\frac{|v|^2}{2T_0(z,x)}}\right).
\] 
The velocity space is truncated with $v_{\min}=v_{\max}=8$.

We perform three different computations corresponding to $ \varepsilon=10^{-2}$, $ \varepsilon=10^{-3}$ and $ \varepsilon=2 \times 10^{-4}$. The final time is fixed to $T_f=0.875$. 

In Figure \ref{Figure6}, we report the $L_2$ norms of the errors for the standard MC method, the MSCV approach and the hierarchical MSCVH2 method for the expected value of the temperature and the density as a function of time. The number of samples used for the BGK model is $M_{E_1}=100$ while for the compressible Euler system is $M_{E_2}=10^5$. In all regimes the gain of the hierarchical approach is remarkable and improves for smaller values of the Knudsen number.

Next, we discuss the numerical results of the two multi-scale control variate approach of Section \ref{sec:twoc}. For brevity, we report the results only for $\varepsilon=5 \times 10^{-4}$. 
To illustrate the models behavior, in Figure \ref{Figure9}, we show the expectation for the temperature and the density obtained with the Boltzmann model and the ones obtained with the two different choices of the relaxation frequencies for the BGK model.
\begin{figure}
	\begin{center}
		\includegraphics[width=0.43\textwidth]{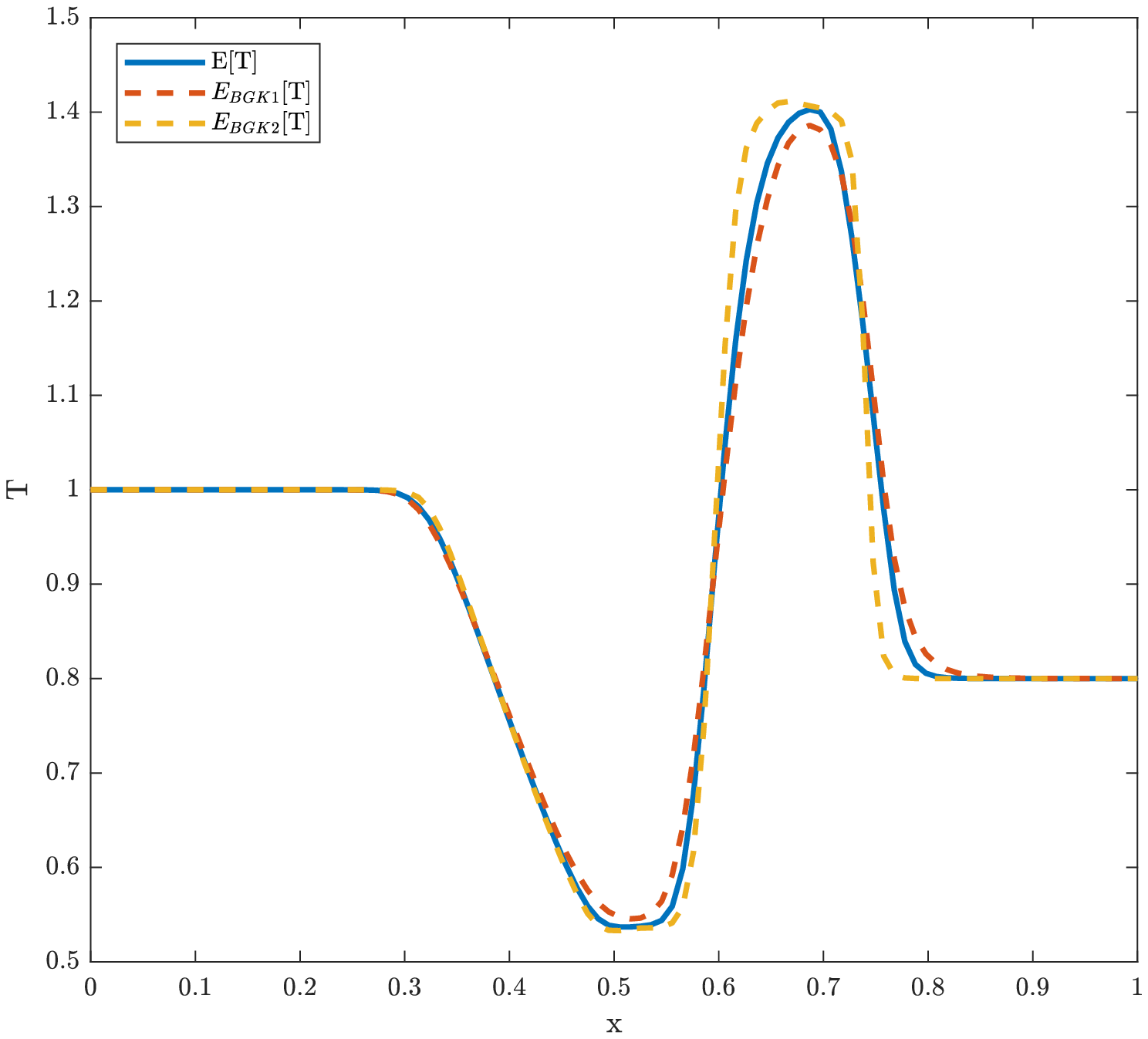}
		\includegraphics[width=0.43\textwidth]{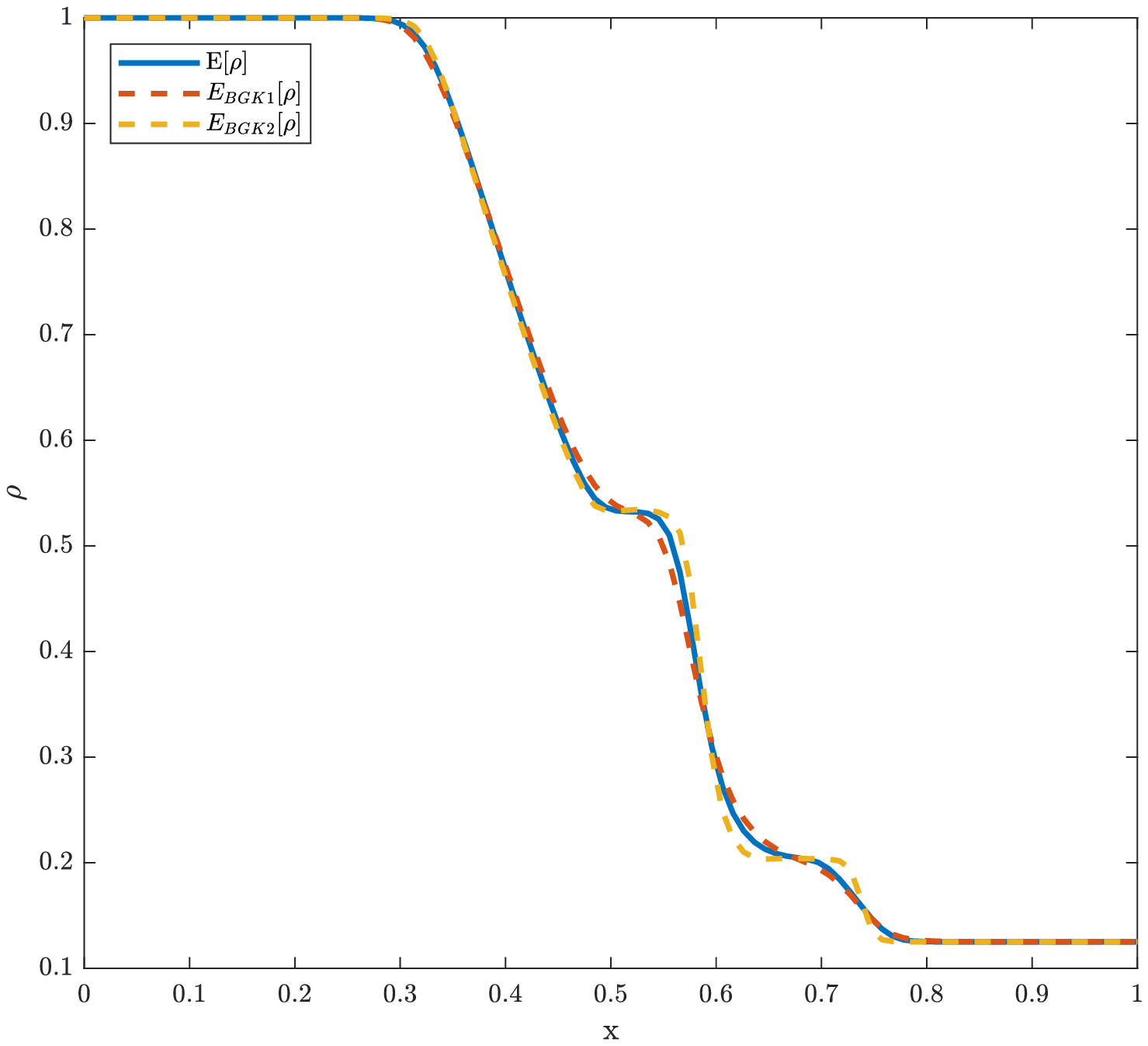}\\
		\caption{Test 2. Sod test with uncertainty in the initial data. Temperature profile at final time on the left. Density profile at final time on the right. Expectation for the Boltzmann model and the two BGK models with $\nu=\rho$ and $\nu=0.125\rho$.}
		\label{Figure9}
	\end{center}
\end{figure}
In Figure \ref{Figure10}, we report the $L_2$ norms of the errors for the expected value of the temperature and the density as a function of time. We plot the error for the standard MC method, for the single MSCV methods with the two different collision frequencies and the multiple MSCV2 method. The number of samples used is $M_E=1000$ on the top, $M_E=5000$ in the middle and $M_E=10000$ on the bottom. The effective gain obtained with the MSCV2 ca be easily observed. 
 
\begin{figure}
	\begin{center}
		\includegraphics[width=.43\textwidth]{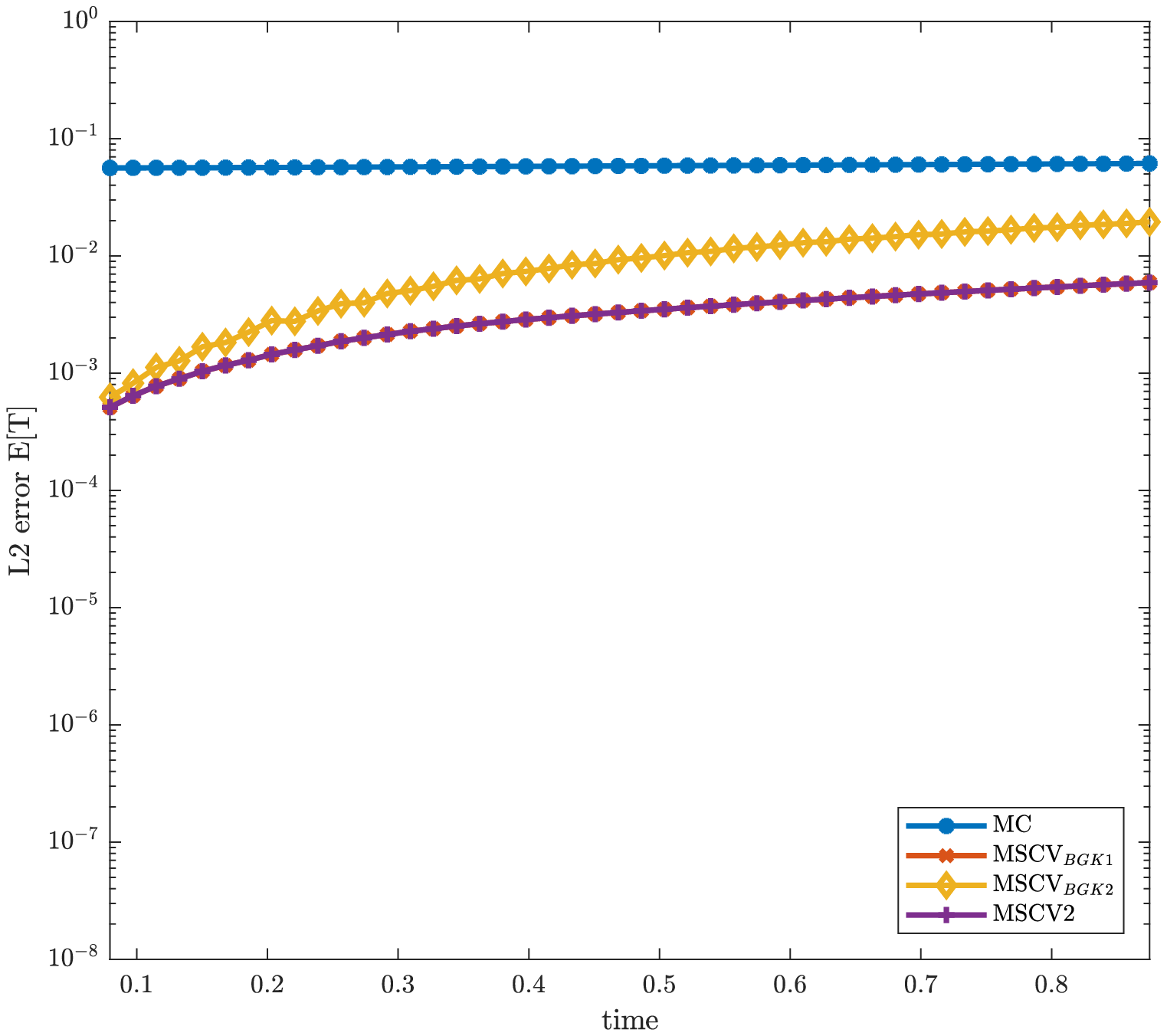}
		\includegraphics[width=.43\textwidth]{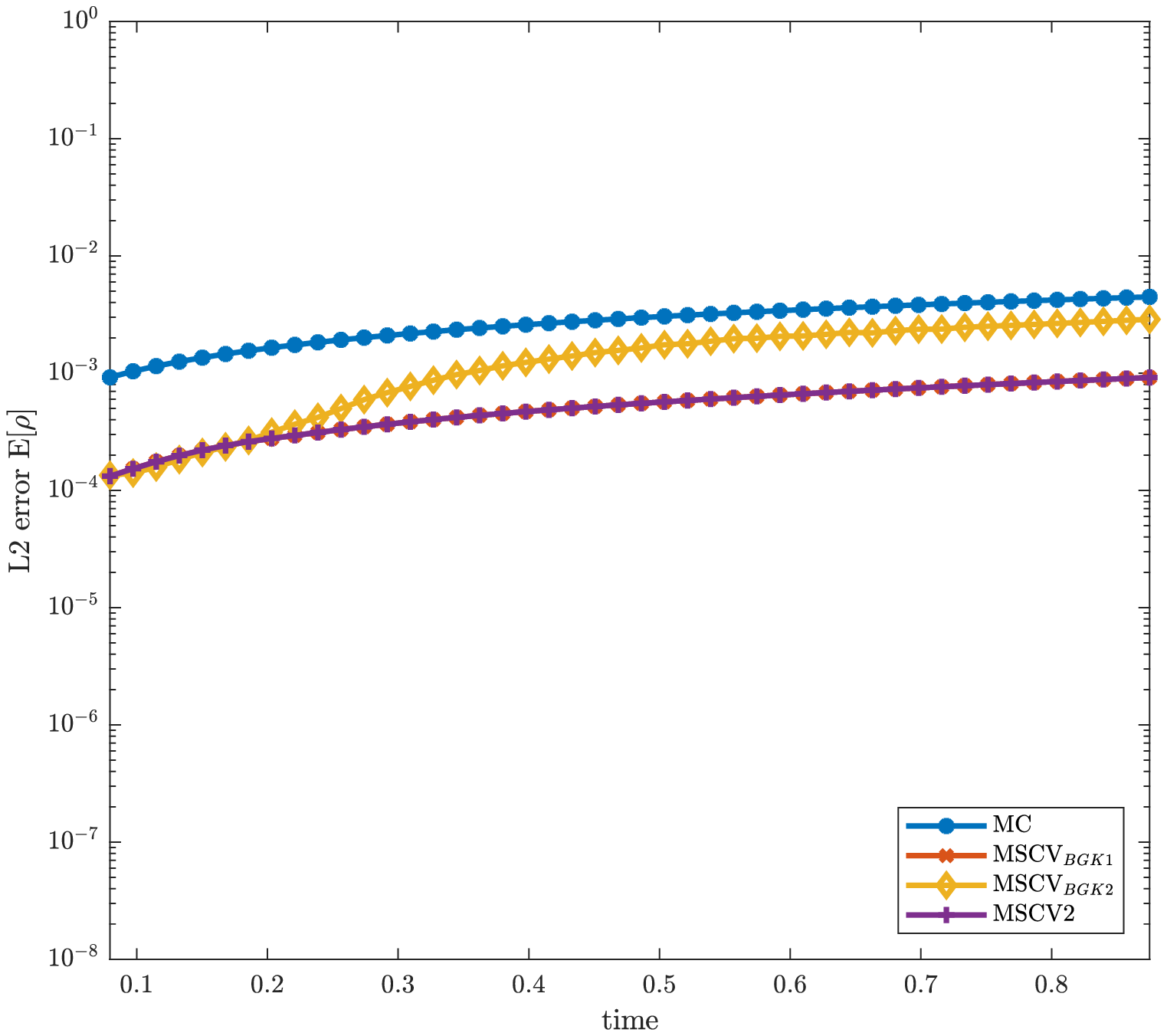}\\
			\includegraphics[width=.43\textwidth]{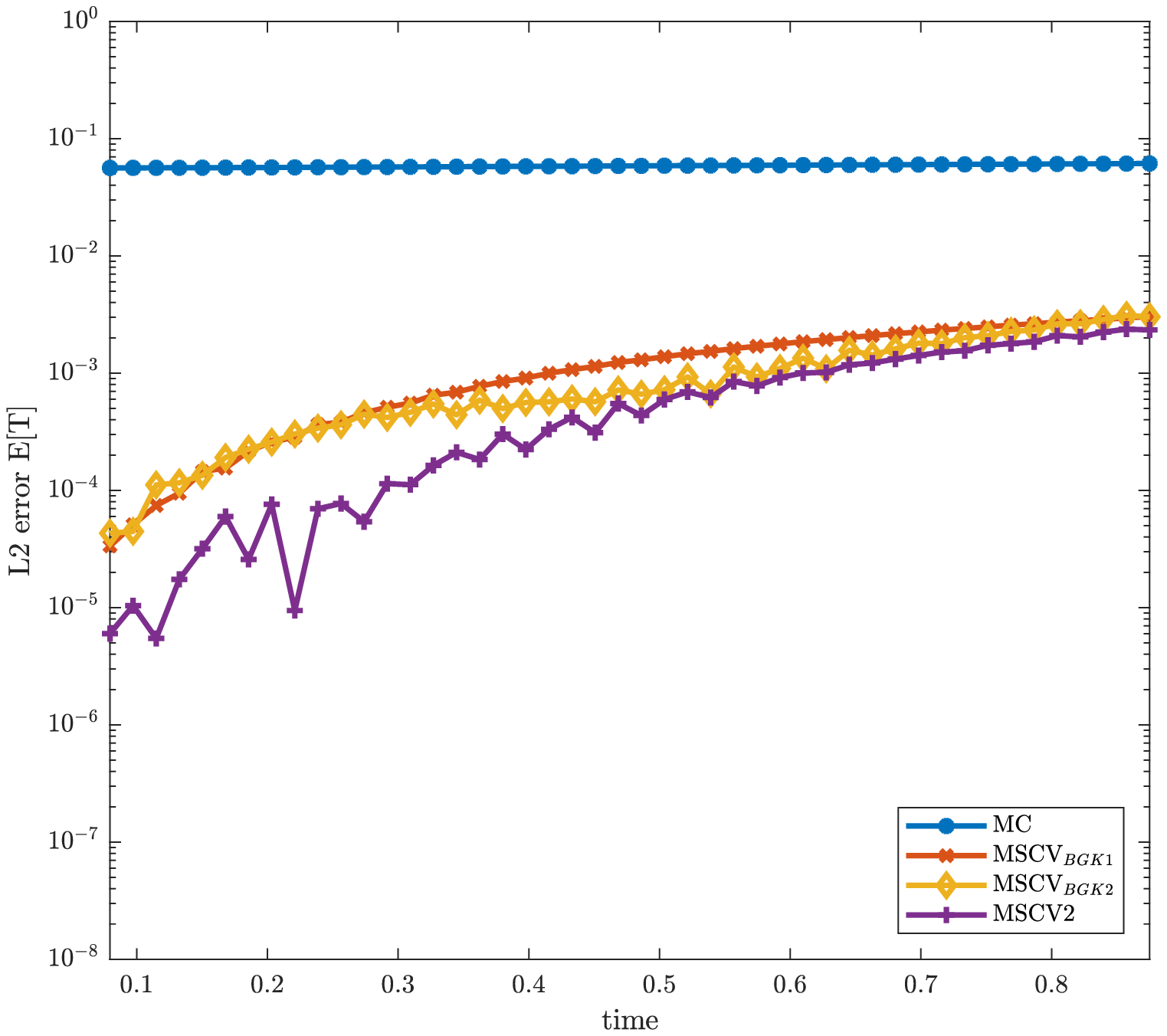}
		\includegraphics[width=.43\textwidth]{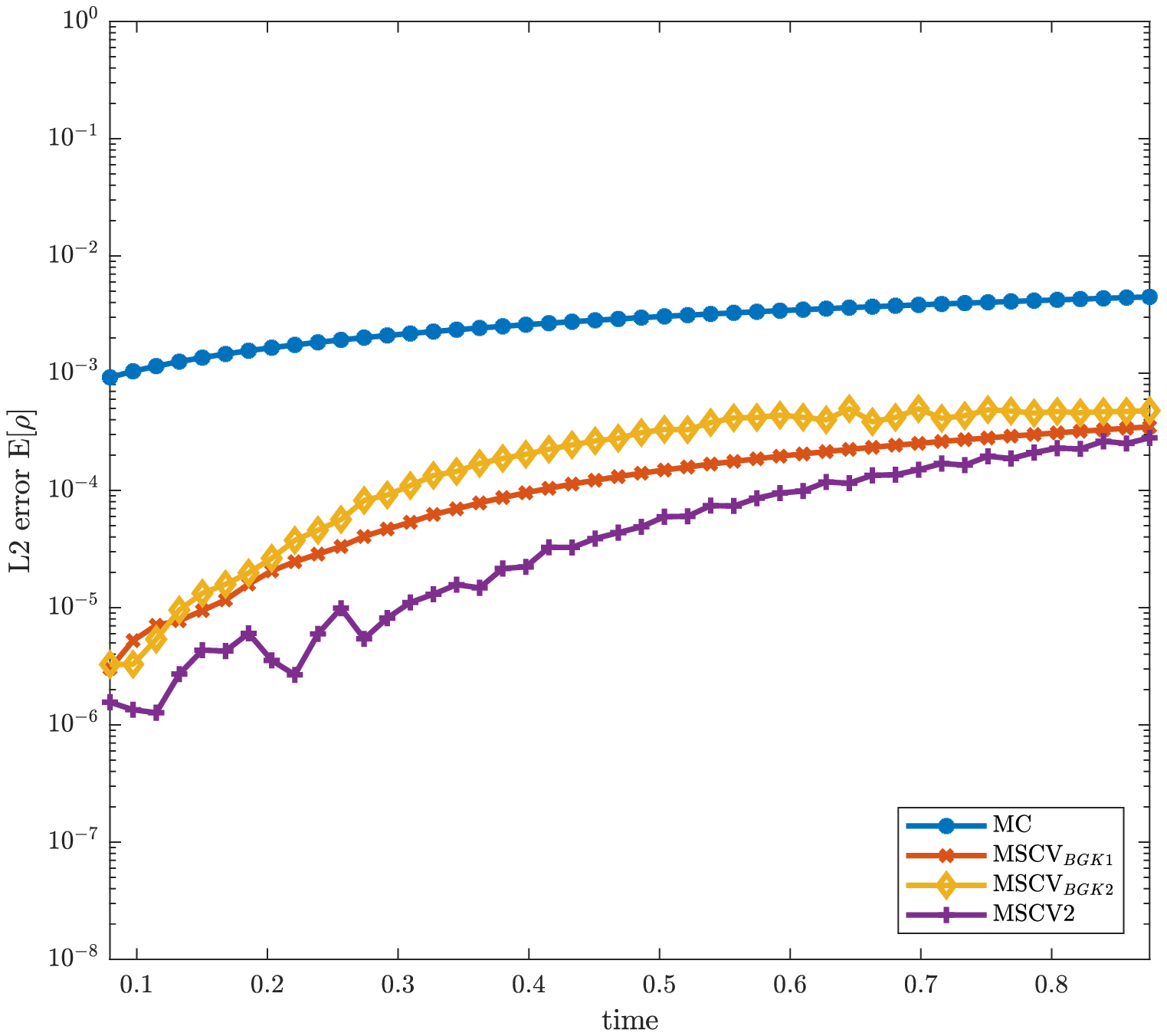}\\
			\includegraphics[width=.43\textwidth]{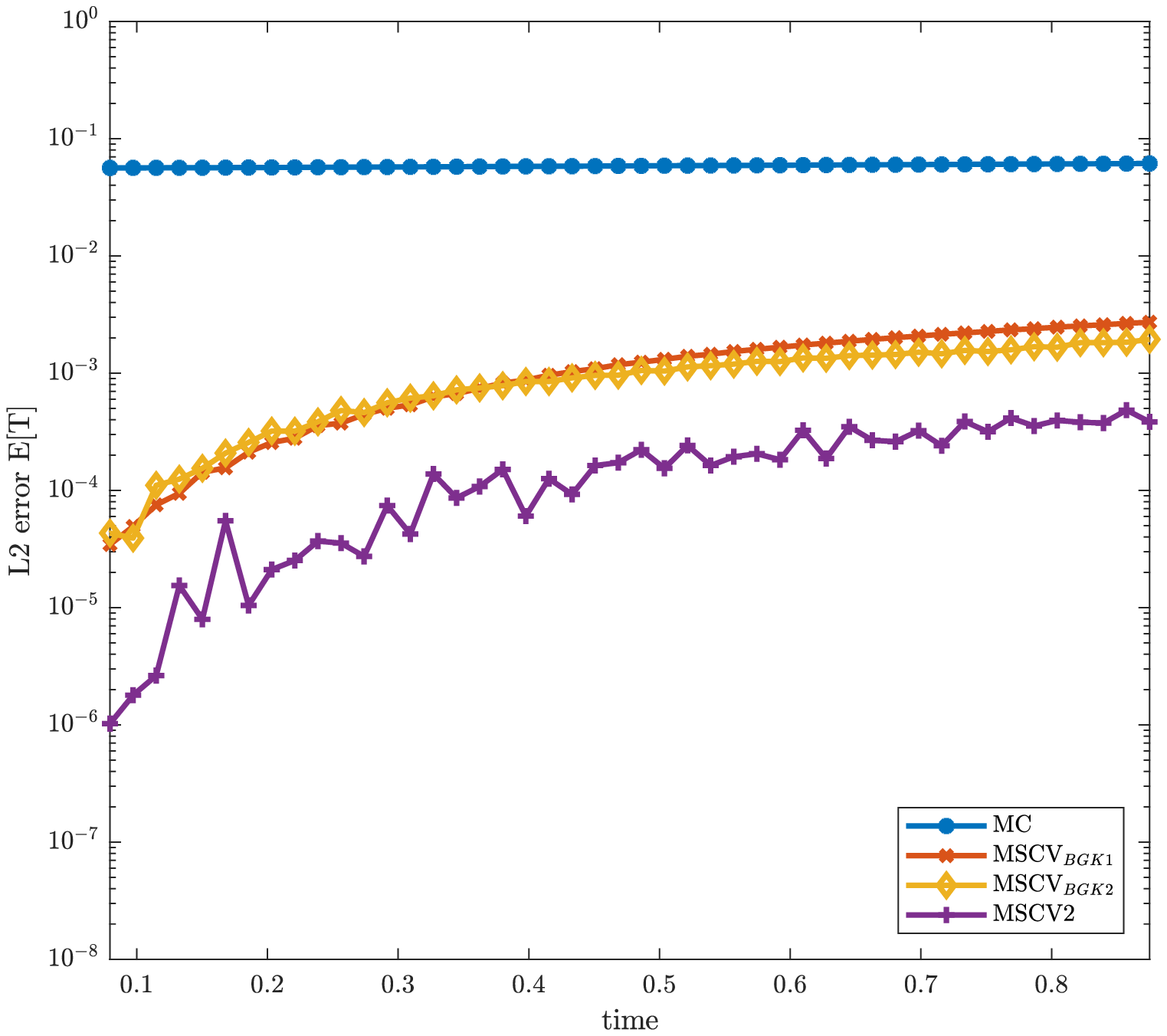}
		\includegraphics[width=.43\textwidth]{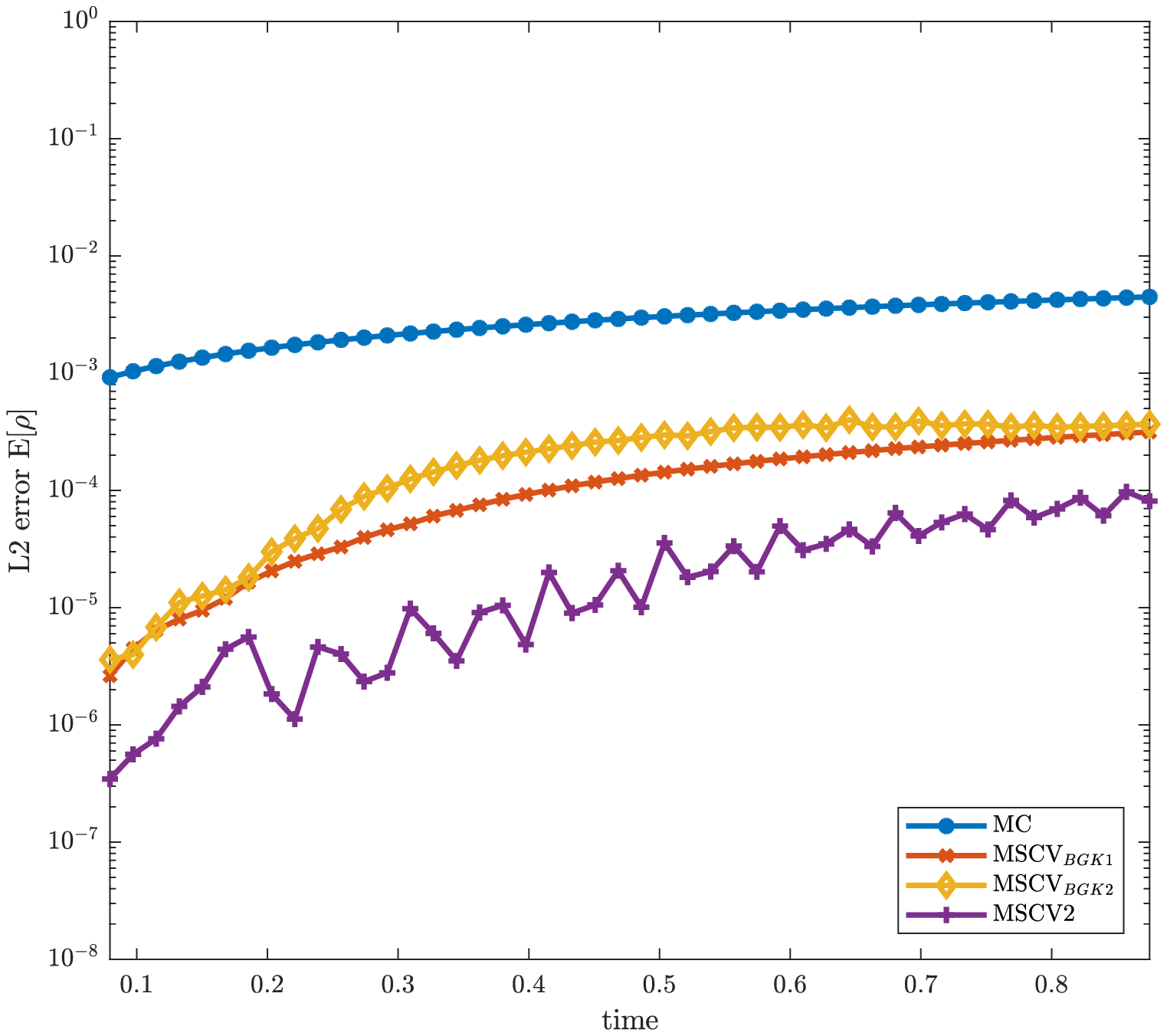}
		\caption{Test 2. Sod test with uncertainty in the initial data. $L_2$ norm of the error for the standard MC method, the MSCV method and the multiple control variate MSCV2 method for the expectation of the temperature (left) and for the density (right). The number of samples used for Boltzmann equation is $M=10$. Top: $M_E=10^3$ points. Middle: $M_E=5\times 10^3$ points. Bottom: $M_E=10^5$ points. }\label{Figure10}
	\end{center}
\end{figure}

\subsubsection{Test 3. Sudden heating problem with uncertain boundary condition}
In the last test case, the initial condition is a constant state in space given by
\be
f_0(x,\w)=\frac{1}{2\pi T^0}e^{-\dfrac{\w^2}{2T^0}}, \ T^0=1, \qquad x\in[0,1].
\ee 
At time $t=0$, the temperature at the left wall suddenly changes and it starts to heat the gas. We assume diffusive equilibrium boundary conditions and uncertainty on the wall temperature:
\be
T_w(z)=2(T^0+sz), \ s=0.2,
\ee
and $z$ uniform in $[0,1]$.
The velocity space is truncated as before with $v_{\min}=v_{\max}=8$, the time discretization and the time step are the same of the previous test. We perform again three different computations corresponding to $ \varepsilon=10^{-2}$, $ \varepsilon=10^{-3}$ and $ \varepsilon=2 \times 10^{-4}$. The final time is fixed to $T_f=0.8$.

In Figure \ref{Figure13}, the $L_2$ norms of the errors for the expected value of the temperature and the density are given as a function of time. The error curves refer to the standard MC method, the MSCV approach when the BGK control variate is employed and the hierarchical multiple MSCVH2 method with BGK and compressible Euler models as control variates. The number of samples for the Boltzmann model is $M=10$, for the BGK model is $M_{E_1}=100$ while for the compressible Euler is $M_{E_2}=10^5$. We see again that the hierarchical approach improves the computed values of the expectations in all regimes. The gain is stronger for the density and tends to increase for smaller values of the Knudsen number.
\begin{figure}
	\begin{center}
		\includegraphics[width=.43\textwidth]{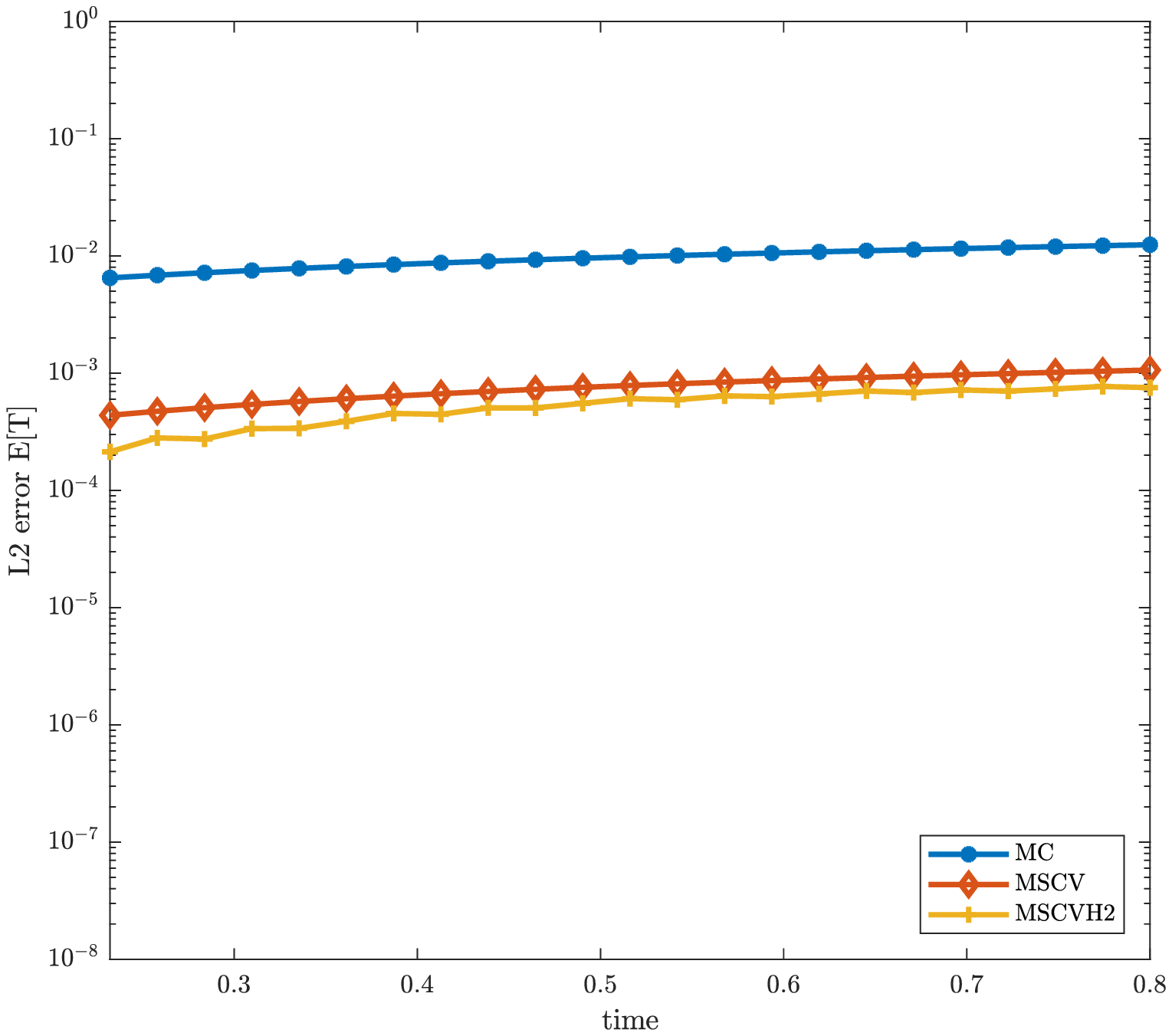}
		\includegraphics[width=.43\textwidth]{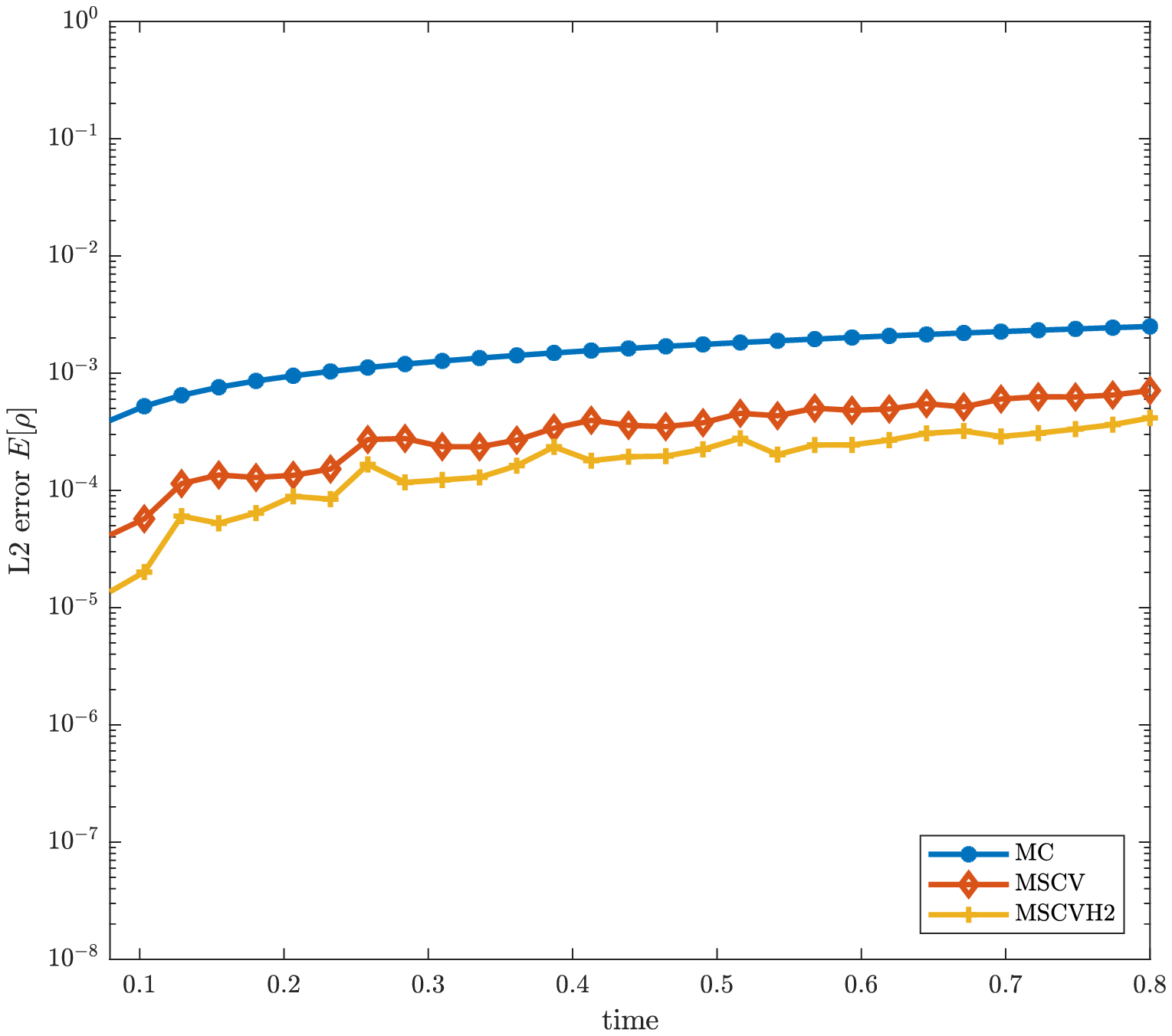}\\
			\includegraphics[width=.43\textwidth]{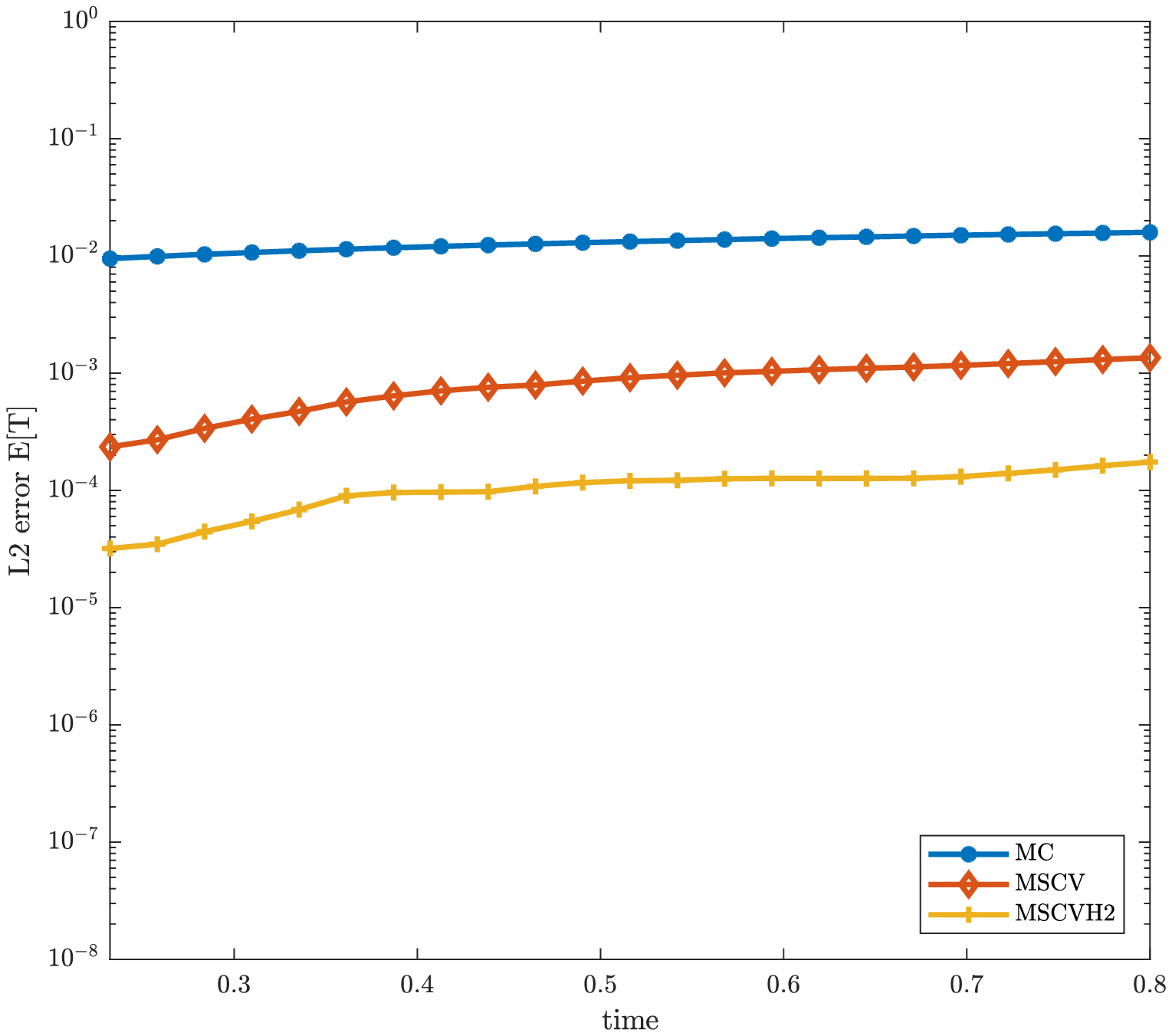}
		\includegraphics[width=.43\textwidth]{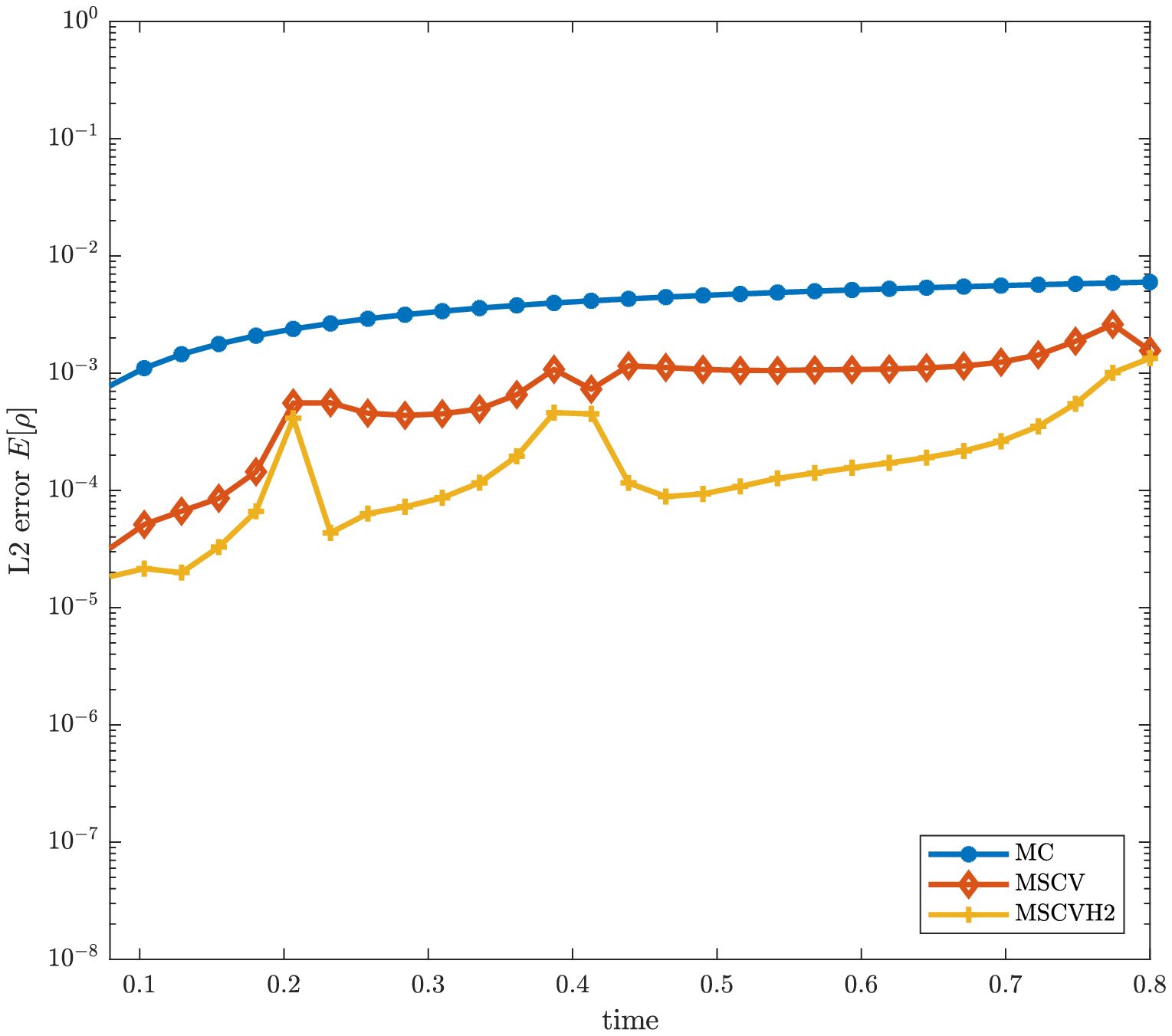}\\
			\includegraphics[width=.43\textwidth]{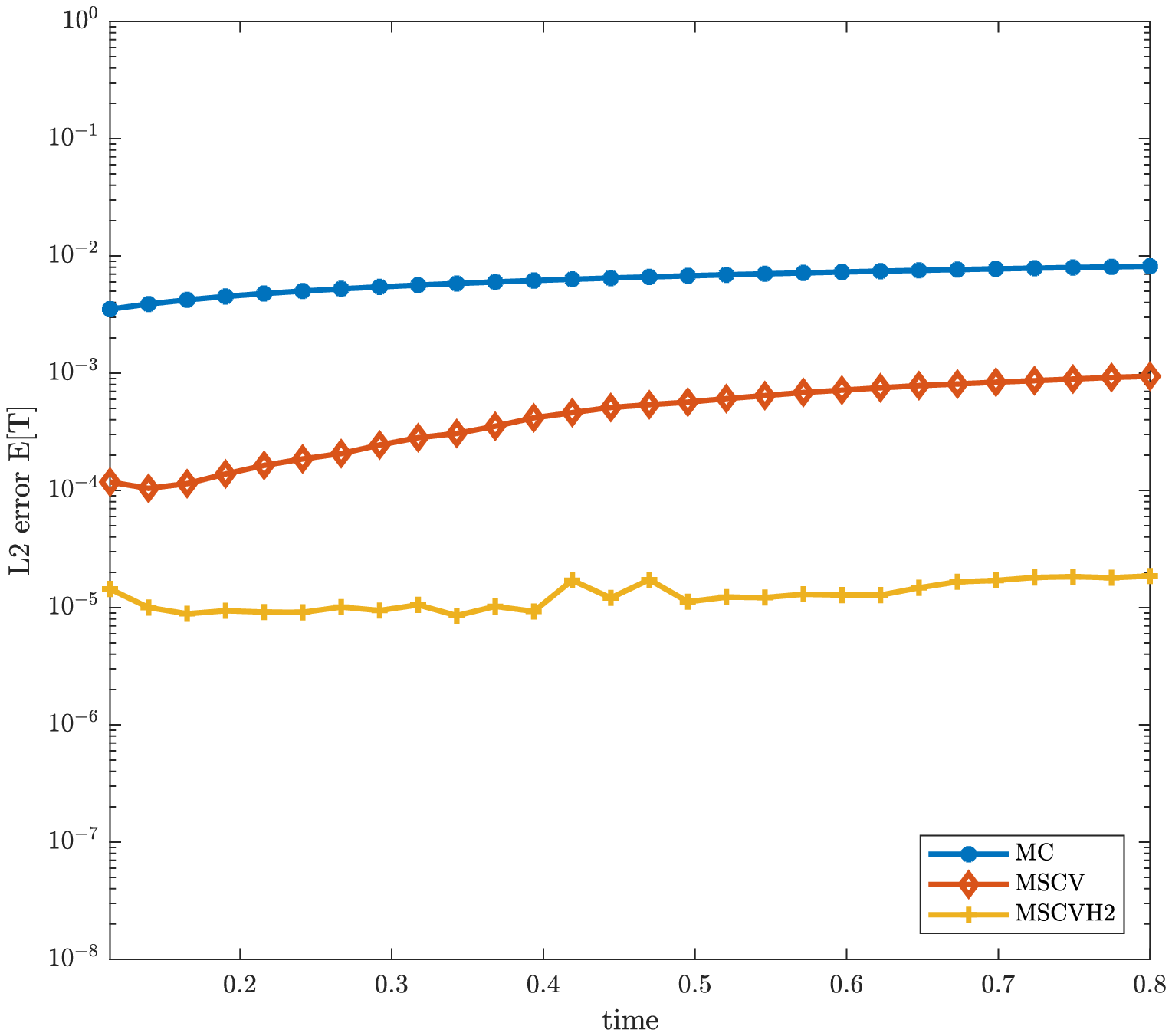}
		\includegraphics[width=.43\textwidth]{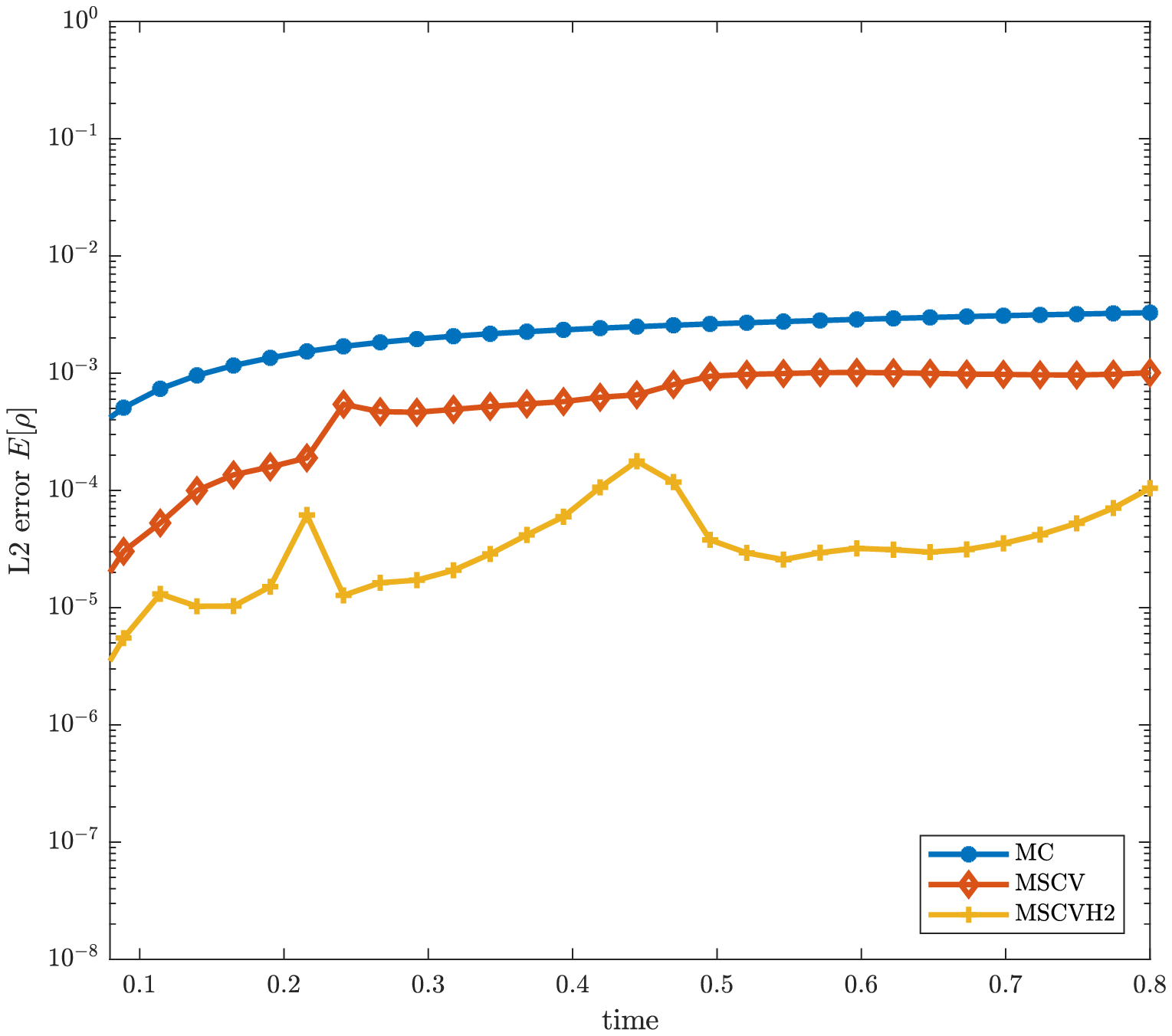}\\
		\caption{Test 3. Sudden heating problem with uncertainty in the boundary condition. $L_2$ norm of the error for the standard MC method, the MSCV method and the hierarchical multiple control variate MSCVH2 method for the expectation of the temperature (left) and for the density (right). The number of samples used for Boltzmann equation is $M=10$, for the BGK model is $M_{E_1}=100$ while for the compressible Euler system is $M_{E_2}=10^5$. Top: $\varepsilon=10^{-2}$. Middle: $\varepsilon=10^{-3}$. Bottom: $\varepsilon=2 \times \ 10^{-4}$.}\label{Figure13}
	\end{center}
\end{figure}


We finally discuss the numerical results of the two multi-scale control variate approach of Section \ref{sec:twoc} for the sudden heating problem. Again, for brevity, we report the results only for $\varepsilon=5 \times 10^{-4}$.  In Figure \ref{Figure16}, we show the expectation for the temperature on the left and the density on the right obtained with the Boltzmann model and the two different BGK models.
\begin{figure}
	\begin{center}
		\includegraphics[width=0.43\textwidth]{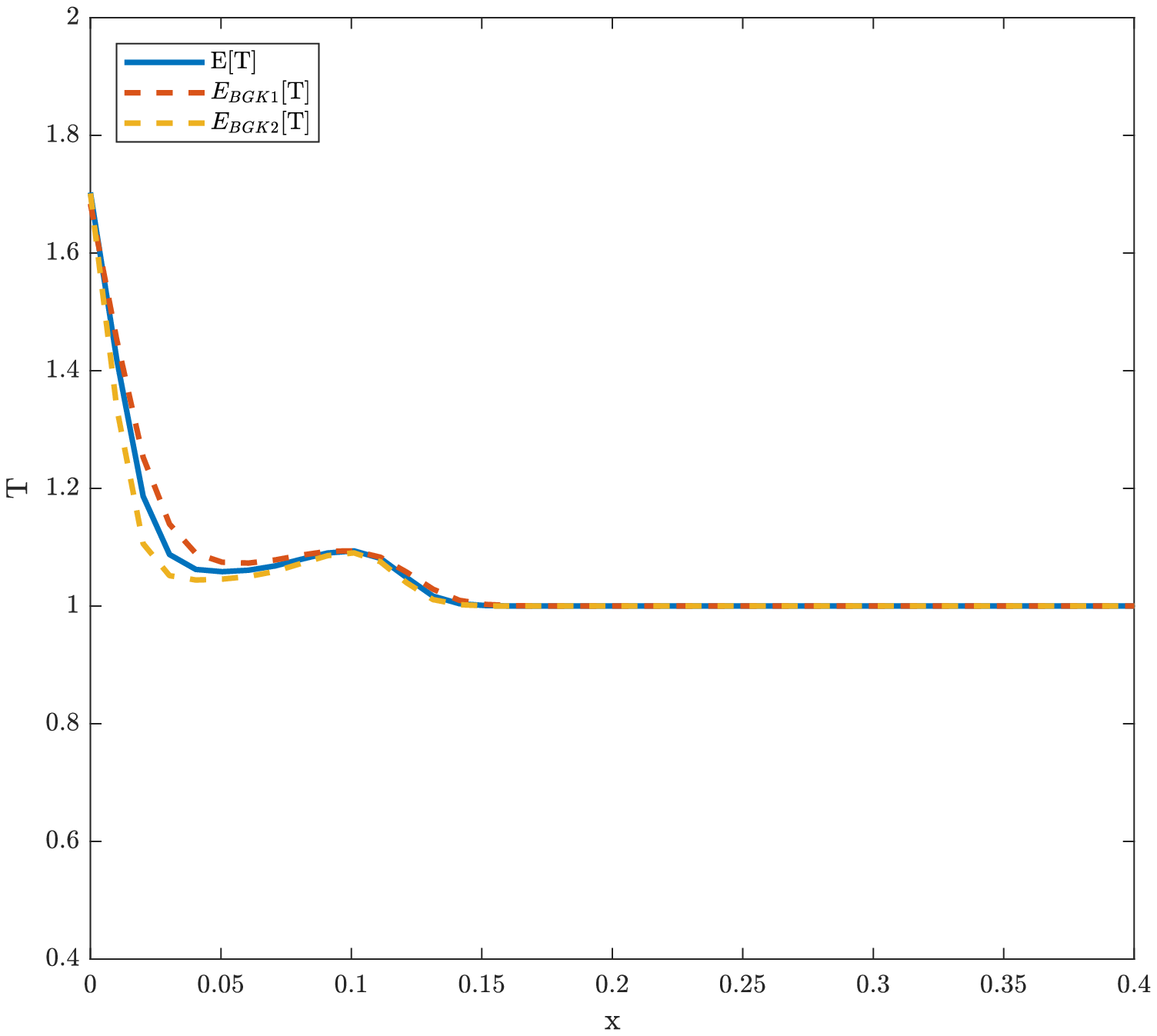}
		\includegraphics[width=0.43\textwidth]{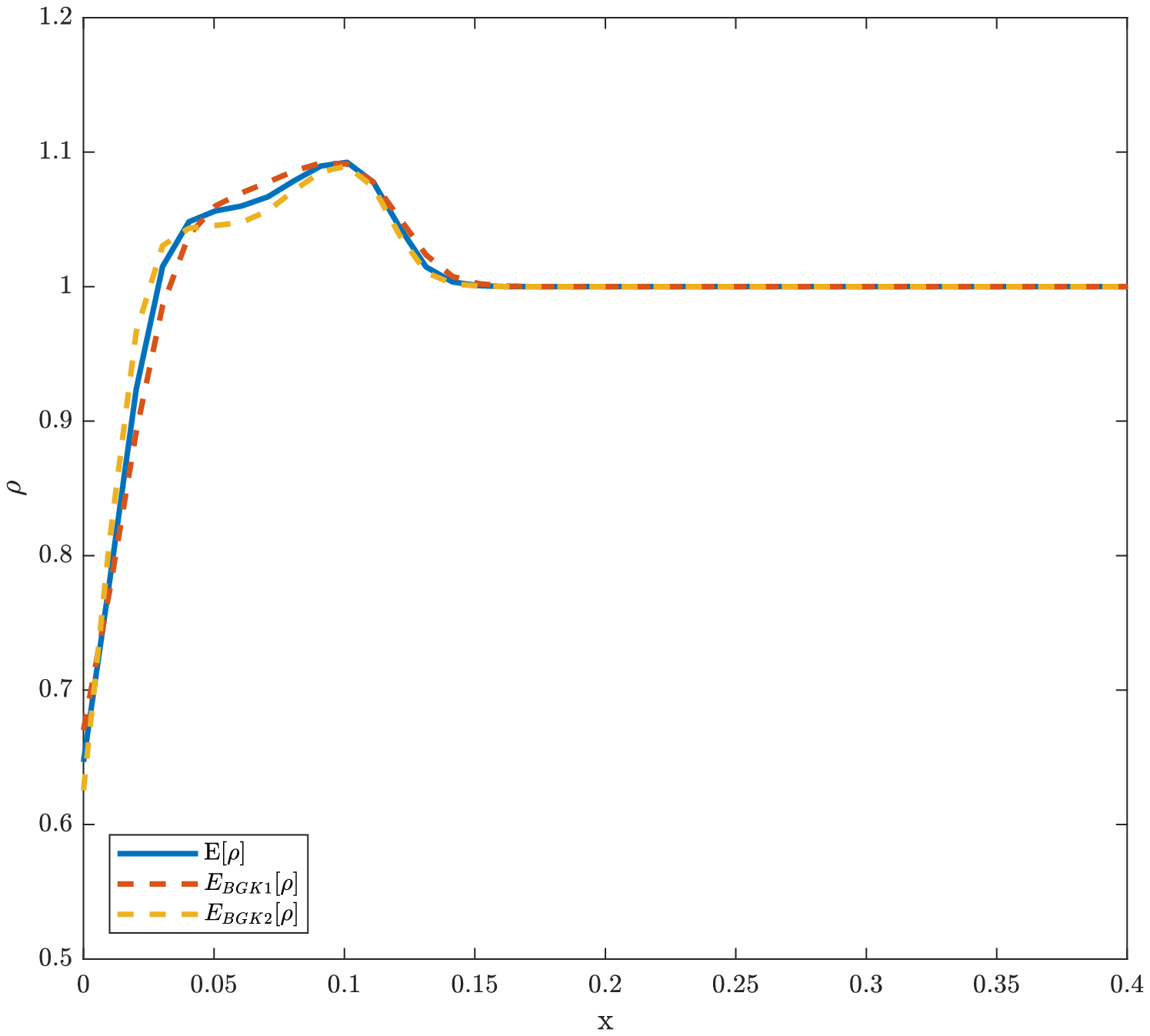}\\
		\caption{Test 3. Sudden heating problem with uncertainty in the boundary condition. Temperature profile at final time on the left. Density profile at final time on the right. Expectation for the Boltzmann model and the two BGK models with $\nu=\rho$ and $\nu=0.125\rho$.}
		\label{Figure16}
	\end{center}
\end{figure}
In Figure \ref{Figure17}, we report the $L_2$ norms of the errors for the expected value of the temperature   and for the density as a function of time. In the images, it is shown the error for the standard MC method, for the MSCV methods with the two different collision frequencies and the two multi-scale control variate MSCV2 method. The number of samples used is $M_E=1000$ on the top, $M_E=5000$ in the middle and $M_E=10000$ on the bottom. As shown, even for this test case, the MSCV2 method is able to improve the accuracy of the estimate provided that the control variate model solutions are evaluated with enough samples. Of course, more precise estimates of the relaxation rates in the BGK models would lead to even stronger improvements.

\begin{figure}
	\begin{center}
		\includegraphics[width=.43\textwidth]{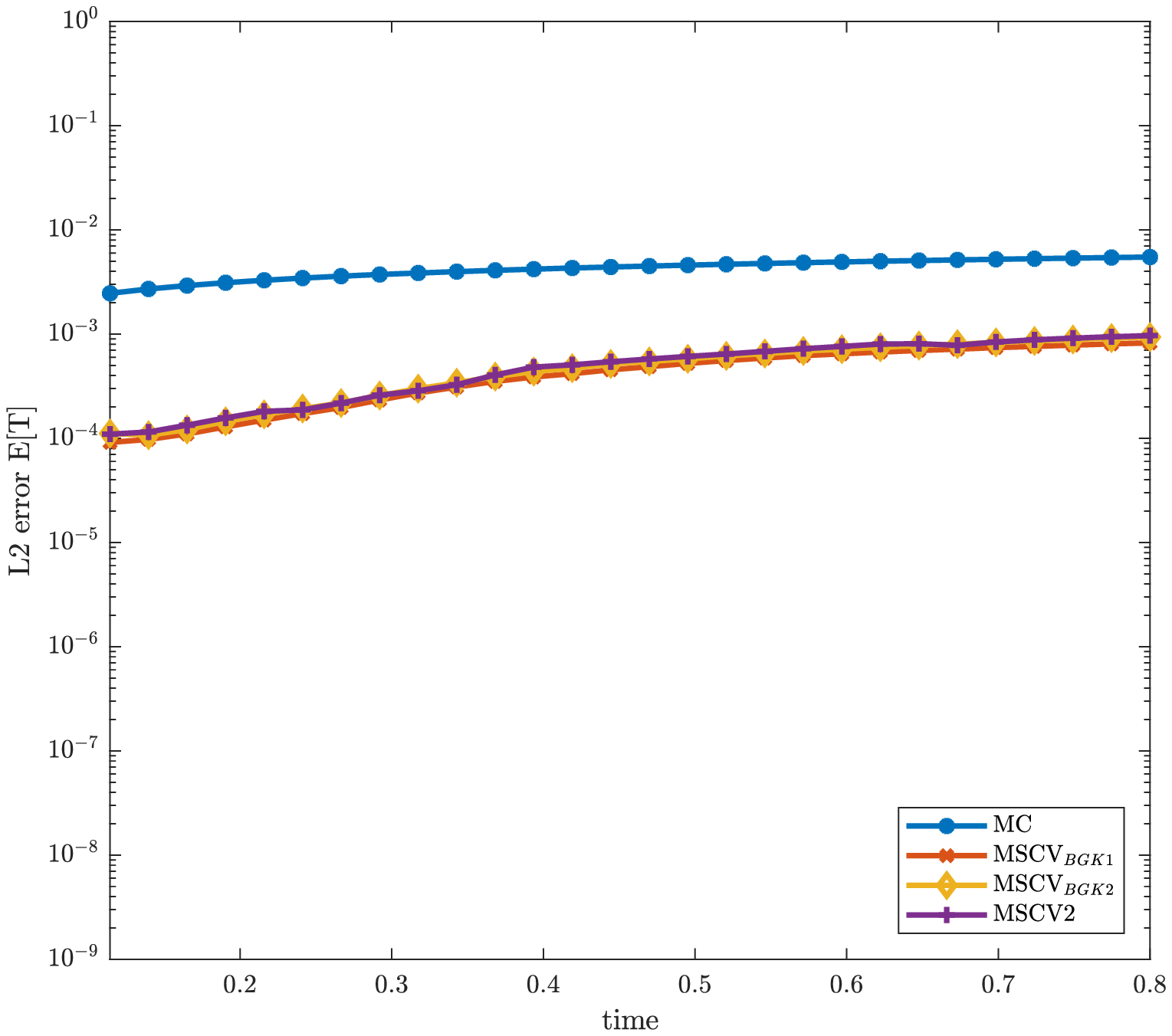}
		\includegraphics[width=.43\textwidth]{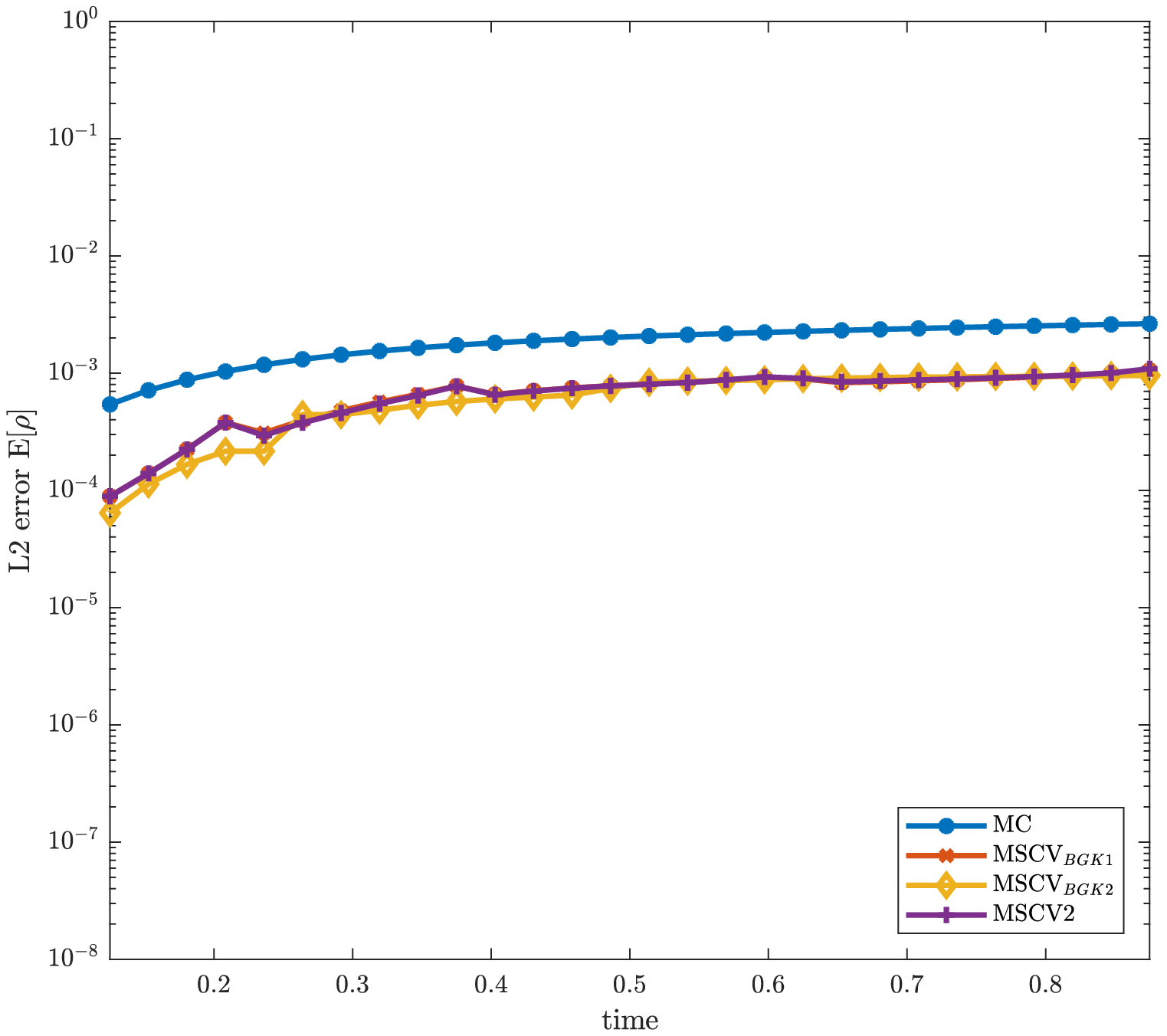}\\
		\includegraphics[width=.43\textwidth]{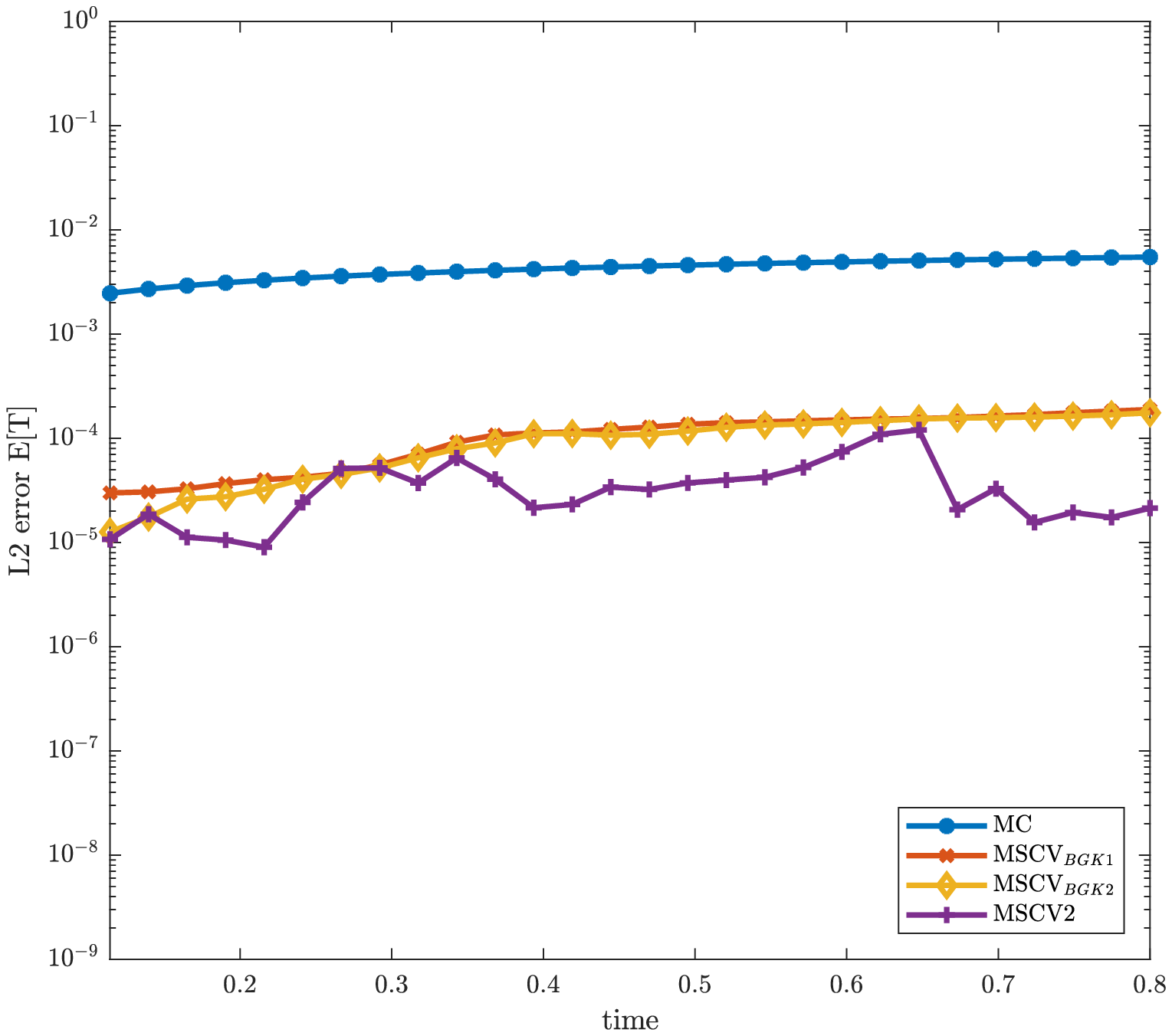}
		\includegraphics[width=.43\textwidth]{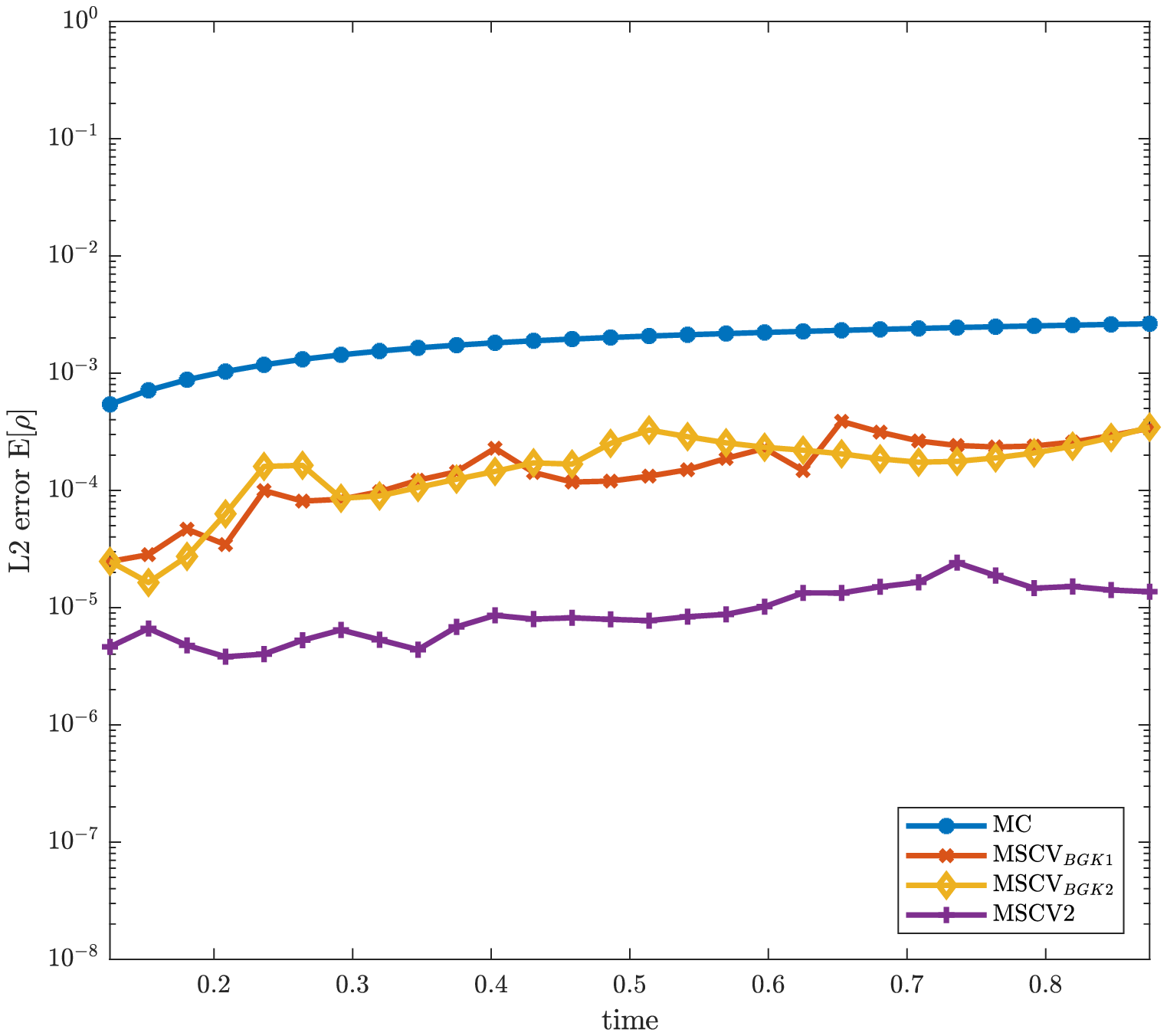}\\
		\includegraphics[width=.43\textwidth]{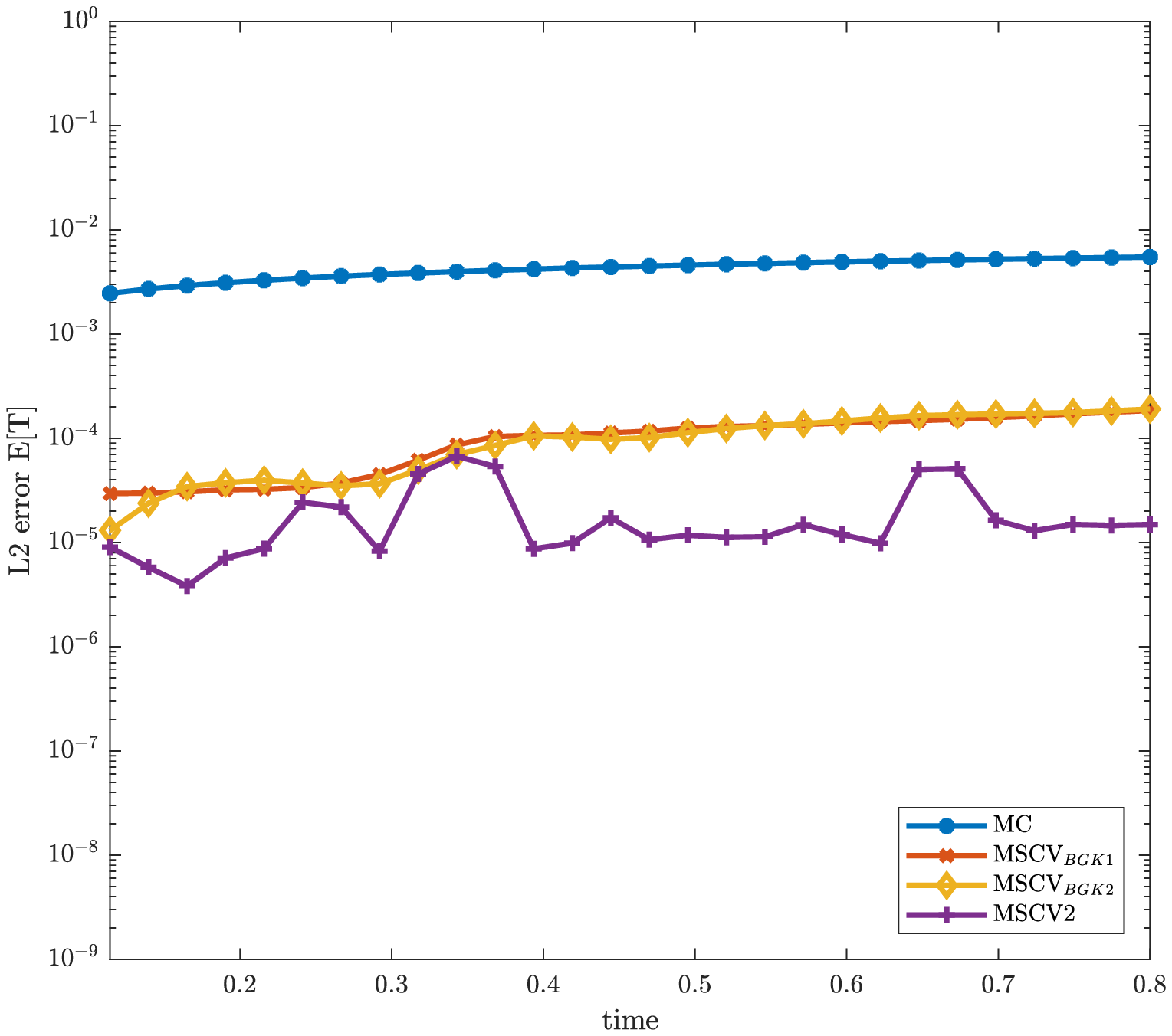}
		\includegraphics[width=.43\textwidth]{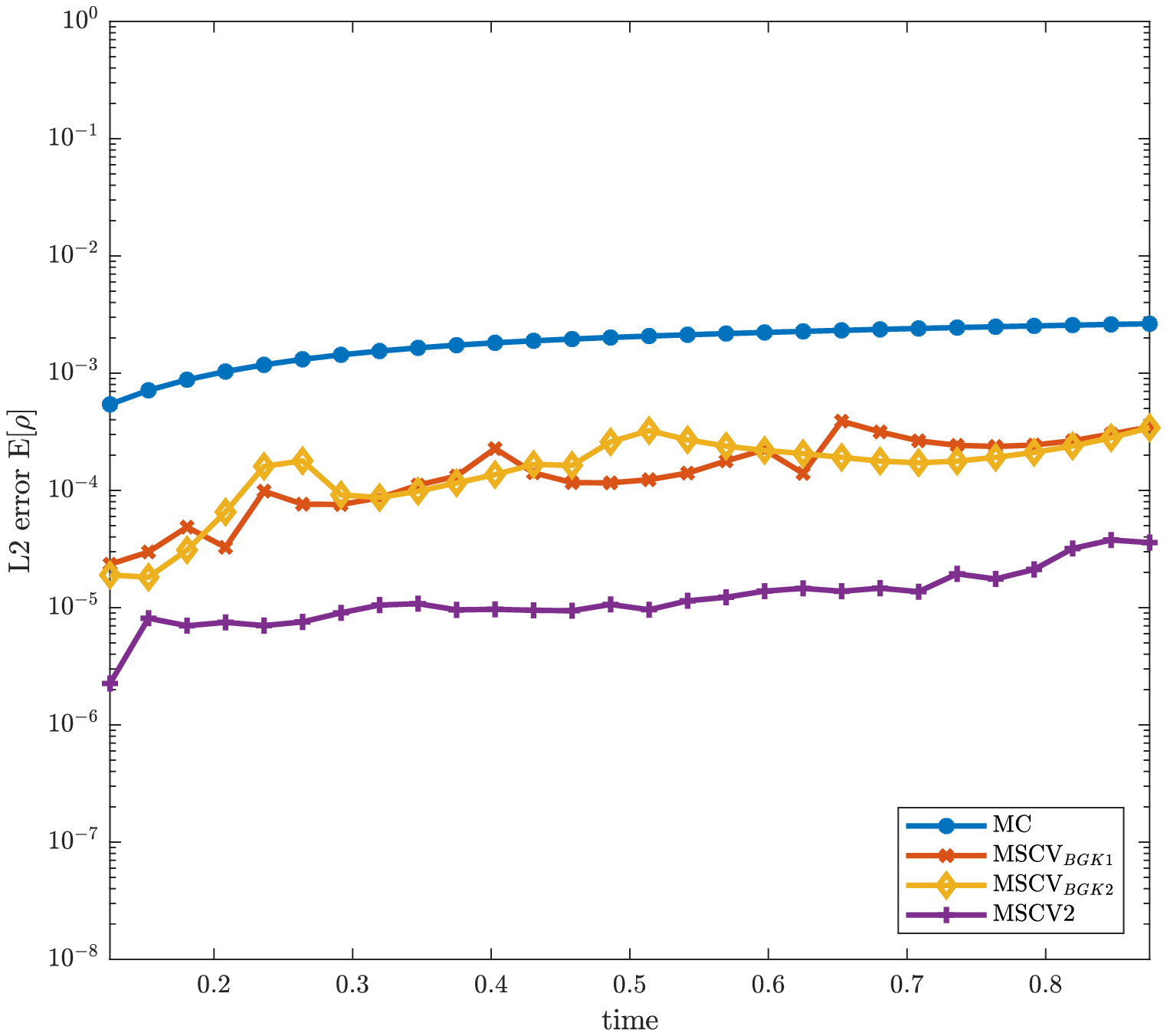}
		\caption{Test 3. Sudden heating problem with uncertainty in the boundary condition. $L_2$ norm of the error for the standard MC method, the MSCV method and the multiple control variate MSCV2 method for the expectation of the temperature (left) and for the density (right). The number of samples used for Boltzmann equation is $M=10$. Top: $M_E=10^3$ points. Middle: $M_E=5\times 10^3$ points. Bottom: $M_E=10^5$ points.}\label{Figure17}
	\end{center}
\end{figure}

\section{Conclusions}
We introduced a novel class of multiple control variate methods based on a multi-scale strategy for uncertainty quantification of kinetic equations and related problems. The approach generalizes the multi-scale control variate methodology recently introduced in \cite{DPms} to the case of multiple control variates. We discussed two different techniques, accordingly to the assumption that the multiple control variates possess a hierarchical multi-scale structure or not. We give theoretical and numerical evidence that these methods can improve the statistical estimates obtained with a single control variate in terms of ratio between accuracy and computational cost. Numerical comparison with other statistical estimators in the case of two control variates are reported for the Boltzmann equation. Finally, we point out that the present results, applied in a multi-level Monte Carlo setting, would naturally lead to optimal multi-level control variates. The latter application represents an interesting direction of research that will be explored in the nearby future.


%
%
%

\end{document}